\newcommand\al{\alpha}
\newcommand\de{\delta}
\newcommand\la{\lambda}
\newcommand{\eps}{\varepsilon}
\newcommand\C{\mathbb C}
\newcommand\kk{\Bbbk}
\renewcommand\aa{\mathfrak a}
\newcommand\bb{{\mathfrak b}}
\newcommand\gl{\mathfrak{gl}}
\newcommand\GL{\operatorname{GL}}
\newcommand\Mat{\operatorname{Mat}}
\newcommand\Rep{\operatorname{Rep}}
\newcommand\Sym{\operatorname{Sym}}
\newcommand{\VdB}{V}
\newcommand\bl{\{\!\!\{}
\newcommand\br{\}\!\!\}}
\newcommand\Atwo{A^{\otimes2}}
\newcommand\Athree{A^{\otimes3}}
\newcommand\Left{\mathbf L}
\newcommand\Right{\mathbf R}
\newcommand\OO{\mathcal O}
\newtheorem{theorem}{Theorem}[section]
\newtheorem{proposition}[theorem]{Proposition}
\newtheorem{lemma}[theorem]{Lemma}
\newtheorem{corollary}[theorem]{Corollary}
\theoremstyle{definition}
\newtheorem{definition}[theorem]{Definition}
\newtheorem{remark}[theorem]{Remark}
\newtheorem{example}[theorem]{Example}
\numberwithin{equation}{section}
\begin{document}

\title[]{Double Poisson brackets and involutive representation spaces}

\author{Grigori Olshanski${}^{1,\, 2,\, 3}$ \and Nikita Safonkin${}^{2,\,4,\, 5}$}

\thanks{
\leftline{Grigori Olshanski: olsh2007@gmail.com,  Nikita Safonkin: safonkin.nik@gmail.com}
\leftline{${}^1$ Institute for Information Transmission Problems (Kharkevich Institute), Moscow, Russia.} 
\leftline{${}^2$ Krichever Center for Advanced Studies, Skolkovo Institute of Science and Technology, Moscow, Russia.} 
\leftline{${}^3$ HSE University, Moscow, Russia.}  
\leftline{${}^4$ Institute of Mathematics, Leipzig University, Germany.}
\leftline{${}^5$ Laboratoire de Math{\'e}matiques de Reims, Universit{\'e} de Reims-Champagne-Ardenne, Reims, France.}
}

\begin{abstract}
Let $\kk$ be an algebraically closed field of characteristic $0$ and $A$ be a finitely generated associative $\kk$-algebra, in general noncommutative. One assigns to $A$ a sequence of commutative $\kk$-algebras $\OO(A,d)$, $d=1,2,3,\dots$, where $\OO(A,d)$ is the coordinate ring of the space $\Rep(A,d)$ of $d$-dimensional representations of the algebra $A$.

A  \emph{double Poisson bracket} on $A$ in the sense of Van den Bergh \cite{vdB} is a bilinear map $\bl-,-\br$ from $A\times A$ to $\Atwo$, subject to certain conditions. Van den Bergh showed that any such bracket $\bl-,-\br$ induces Poisson structures on all algebras $\OO(A,d)$. 

We propose an analog of Van den Bergh's construction, which produces Poisson structures on the coordinate rings of certain subspaces of the representation spaces $\Rep(A,d)$. We call these subspaces the \emph{involutive} representation spaces. They arise by imposing an additional symmetry condition on $\Rep(A,d)$ --- just as the classical groups from the  series B, C, D are obtained from the general linear groups (series A) as fixed point sets of involutive automorphisms.   

\end{abstract}

\date{}

\maketitle

\tableofcontents

\section{Introduction}

Throughout the paper we denote by $\kk$ a fixed algebraically closed field of characteristic $0$. By $A$ we denote a finitely generated associative algebra over $\kk$. Unless otherwise stated, we assume that $A$ contains the identity element denoted by $1$. 

\subsection{Double Poisson brackets}\label{section1.1}
Introduce a notation referring to the algebra $\Atwo=A\otimes A$. 

\begin{itemize}

\item
Elements $ x\in \Atwo$ are often written in the shorthand form $x'\otimes x''$ meaning that $x=\sum_i x'_i\otimes x''_i$ for some $x'_i, x''_i\in A$.

\item
Given $x=x'\otimes x''\in\Atwo$, we set $x^\circ:=x''\otimes x'$. So, $x\mapsto x^\circ$ is an automorphism of $\Atwo$. 

\item 
The natural action of the symmetric group $\mathfrak S_3$ on $\Athree$ is written as 
$$
s\cdot x_1\otimes x_2\otimes x_3 :=x_{s^{-1}(1)}\otimes x_{s^{-1}(2)}\otimes x_{s^{-1}(3)}, \qquad s\in\mathfrak S_3.
$$
\end{itemize}

The following definition is due to Van den Bergh (see \cite[sections 2.2 -- 2.3]{vdB}). 

\begin{definition}\label{def1.A}
{\rm(i)} A \emph{double bracket} on $A$ is a bilinear map 
$$
\bl-,-\br: A\times A \to  \Atwo 
$$
satisfying the following two conditions. 

\smallskip

$\bullet$  \emph{Skew symmetry}: for $a,b\in A$,
\begin{equation*}
\bl a,b\br=-\bl b,a\br^\circ.
\end{equation*}

$\bullet$ \emph{Leibniz rule}: for $a,b,c\in A$, 
\begin{equation*}
\bl a, bc\br=\bl a,b\br c+ b\bl a,c\br,
\end{equation*}
where, for $x=x'\otimes x''\in\Atwo$,
$$
(x'\otimes x'')c:= x'\otimes(x'' c), \qquad b(x'\otimes x''):=(bx')\otimes x''.
$$ 

{\rm(ii)} A double bracket is called \emph{double Poisson bracket} if the following relation holds.

\smallskip

$\bullet$ \emph{Double Jacobi identity}: for $a,b,c\in A$, 
\begin{equation}\label{f1}
\Bigl\{\!\!\!\Bigl\{ a,\bl b,c\br\Bigr\}\!\!\!\Bigr\}_\Left +(123)\cdot \Bigl\{\!\!\!\Bigl\{ b,\bl c,a\br\Bigr\}\!\!\!\Bigr\}_\Left +(123)^2\cdot \Bigl\{\!\!\!\Bigl\{ c,\bl a,b\br\Bigr\}\!\!\!\Bigr\}_\Left=0,
\end{equation}
where $(123)$ is the cyclic permutation $1\to 2\to 3\to 1$, which pushes everything to the right
$$
(123)\cdot (x'\otimes x''\otimes x''')=x'''\otimes x'\otimes x'',
$$
and for $x=x'\otimes x''\in\Atwo$,
$$
\bl a,x\br_\Left:= \bl a,x'\br\otimes x''\in\Athree.
$$
\end{definition}

Note that a double bracket $\bl-,-\br$ is uniquely determined by its values on any system of generators of $A$: this follows from the Leibniz rule. Here it is worth noting that, due to the symmetry condition, the following variant of the Leibniz rule also holds true:  
\begin{equation*}
\bl ab, c\br=\bl a,c\br \ast b+ a\ast \bl b,c\br,
\end{equation*}
where, for $x=x'\otimes x''\in\Atwo$,
$$
(x'\otimes x'')\ast b:= (x'b)\otimes x'', \qquad a\ast (x'\otimes x''):=x'\otimes (ax'').
$$ 

The two variants of the Leibniz rule mean that the double bracket $\bl-,-\br$ is a double derivation in its second argument for the outer bimodule structure on $A^{\otimes 2}$, as well as a double derivation in the first argument for the inner bimodule structure; see \cite[sect. 2]{Etingof} for definitions. About double brackets defined for other bimodule structures on $A^{\otimes 2}$ see \cite{FaironMcCulloch}.

\begin{example}\label{examp_2}
Let $\kk\langle\al_1,\dots,\al_L\rangle$ be the free associative algebra with $L$ generators $\al_1,\dots,\al_L$.  On it, there exists a (unique) double Poisson bracket such that   
\begin{equation*}
\bl \al_i, \al_j\br=\de_{ij}(1\otimes \al_i-\al_i\otimes 1), \quad i,j=1,\dots,L.
\end{equation*}
\end{example}

In the paper \cite{AKKN} by Alekseev, Kawazumi, Kuno, and Naef, this bracket (taken with the opposite sign) was called the \emph{Kirillov-Kostant-Souriau {\rm(}KKS{\rm)} bracket}. It is a particular case of a large family of \emph{linear} double Poisson brackets on free algebras, introduced by Pichereau and Van de Weyer \cite{pich}. Another family  (\emph{quadratic} double Poisson brackets) was investigated by Odesskii, Rubtsov and Sokolov  \cite{ORS}. These authors also considered more general, combined quadratic-linear brackets.

\subsection{The algebras $\OO(A,d)$}\label{sect1.2}

Throughout the paper $d$ denotes a positive integer. Let $\Mat(d,\kk)$ be the algebra of $d\times d$ matrices over $\kk$ and $\Rep(A,d)$ be the set of algebra homomorphisms $T: A\to \Mat(d,\kk)$ preserving the identity elements (that is, $T(1)=1$). It is an affine algebraic variety called the $d$th \emph{representation space} for the algebra $A$. 

Next, let $\OO(A,d)$ be the commutative unital $\kk$-algebra generated by the symbols $a_{ij}$ for $a\in A$ and $i,j\in\{1,\dots,d\}$, which are $\kk$-linear in $a$  and  are subject to the relations
\begin{equation*}
(ab)_{ij}=\sum_{r=1}^d a_{ir}b_{rj}, \qquad a,b\in A, \quad i,j=1,\dots,d
\end{equation*}
together with
\begin{equation*}
    1_{ij}=\delta_{ij} \qquad i,j=1,\dots,d. 
\end{equation*}
These relations mimic the evident relations satisfied by the matrix entries $T(a)_{ij}$, 
$$
T(ab)_{ij}=\sum_{r=1}^s T(a)_{ir}T(b)_{rj}, \quad a, b\in A.
$$
The algebra $\OO(A,d)$ is commonly called the \emph{coordinate ring} of the representation space $\Rep(A,d)$.  

The connection between $\Rep(A,d)$ and $\OO(A,d)$ is expressed in the fact that the elements of $\OO(A,d)$ induce functions on the space $\Rep(A,d)$, and this space can be naturally identified with $\operatorname{Specm}(\OO(A,d))$, the space of maximal ideals of $\OO(A,d)$. To strengthen this connection one should define representation spaces as affine schemes (see Le Bruyn \cite[section 2.1]{LeBruyn}). However, for our purposes this does not matter, because  we will really work only with the algebras  $\OO(A,d)$ (and their analogs).

\subsection{Van den Bergh's construction}

The following fundamental result is contained in Van den Bergh's paper \cite{vdB}, see there Propositions 1.2 and 7.5.2. 

\begin{proposition}[Van den Bergh]\label{prop1.A}
Let $A$ be an associative $\kk$-algebra and $\bl-,-\br$ be a double Poisson bracket on $A$. For each $d=1,2,\dots$, the formula
\begin{equation}\label{vdB formula}
\{a_{ij},b_{kl}\}:=\bl a,b\br'_{kj}\bl a,b\br''_{il}, \qquad a,b\in A, \quad i,j,k,l\in\{1,\dots,d\},
\end{equation}
gives rise to a Poisson bracket on the commutative algebra $\OO(A,d)$. 
\end{proposition}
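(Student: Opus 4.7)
The plan is to establish three properties of the proposed bracket on $\OO(A,d)$: consistency with the defining relations of $\OO(A,d)$ (so that \eqref{vdB formula}, extended by bilinearity and the Leibniz rule, descends to a well-defined biderivation), skew-symmetry, and the Jacobi identity.

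For consistency, I would check the two generating relations $(ab)_{ij}=\sum_r a_{ir}b_{rj}$ and $1_{ij}=\delta_{ij}$ separately. Expanding $\{(ab)_{ij},c_{kl}\}$ via the Leibniz rule $\bl ab,c\br=\bl a,c\br\ast b+a\ast\bl b,c\br$ (a direct consequence of skew-symmetry, recorded in the excerpt) and opening the matrix products yields precisely $\sum_r\{a_{ir},c_{kl}\}b_{rj}+\sum_r a_{ir}\{b_{rj},c_{kl}\}$; compatibility in the second slot then follows from skew-symmetry. For the unit, applying the Leibniz rule to $\bl 1\cdot 1,c\br$ forces $\bl 1,c\br=0$, whence $\{1_{ij},c_{kl}\}=0=\{\delta_{ij},c_{kl}\}$. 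The skew-symmetry $\{b_{kl},a_{ij}\}=-\{a_{ij},b_{kl}\}$ on generators is immediate from substituting $\bl b,a\br=-\bl a,b\br^\circ$ into \eqref{vdB formula}, and it propagates to $\OO(A,d)$ by bilinearity.

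The Jacobi identity is the heart of the matter. Since the Jacobiator of a skew biderivation is a triderivation in each slot, it suffices to prove $J(a_{ij},b_{kl},c_{mn}):=\{a_{ij},\{b_{kl},c_{mn}\}\}+\text{cyclic}=0$ on generators. Expanding the inner bracket via \eqref{vdB formula} and differentiating the outer by the Leibniz rule produces, for each cyclic summand, two scalar monomials in $\OO(A,d)$; six monomials in total, each a product of three matrix entries of iterated double brackets. I would introduce the evaluation map $E\colon\Athree\to\OO(A,d)$, $E(p\otimes q\otimes r):=p_{mj}\,q_{il}\,r_{kn}$, and verify by direct index bookkeeping that three of the six monomials---one from each cyclic rotation of $(a,b,c)$---coincide respectively with $E(\bl a,\bl b,c\br\br_\Left)$, $E((123)\cdot\bl b,\bl c,a\br\br_\Left)$, and $E((123)^2\cdot\bl c,\bl a,b\br\br_\Left)$. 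The cyclic shift $(123)$ on $\Athree$ is exactly what realigns the index triples $(i,j),(k,l),(m,n)$ under cycling $(a,b,c)$, and this match is the reason for the asymmetric index pattern in \eqref{vdB formula}. These three terms sum to zero by \eqref{f1}. The remaining three monomials, coming from the companion branch of the Leibniz expansion, assemble---under a second evaluation map---into a cyclic sum built from the right variant $\bl a,x\br_\Right:=x'\otimes\bl a,x''\br$ that, after applying the double skew-symmetry to the inner bracket, becomes another instance of \eqref{f1} (applied to a cyclic reordering of $(a,b,c)$) and hence also vanishes.

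The main obstacle is this index bookkeeping: one must correctly pair each of the six matrix indices $(i,j,k,l,m,n)$ with the appropriate Sweedler factor of the iterated double bracket, and recognize the cyclic permutation $(123)$ in \eqref{f1} as exactly the one forced by cycling $(a,b,c)$. Everything else reduces to routine applications of the double-bracket axioms.
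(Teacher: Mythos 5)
Your proposal is correct and follows essentially the same route as the paper: the paper recasts this very argument in coordinate-free language (the operator $P$ on $\operatorname{Mat}^*(d,\kk)^{\otimes 2}$ and the coproduct $\Delta$), first building the bracket on the symmetric algebra and then checking that the defining ideal of $\OO(A,d)$ is a Poisson ideal, which is your consistency step. In particular, your splitting of the six Jacobiator monomials into two groups of three, each assembling into an instance of the double Jacobi identity \eqref{f1} (one for $(a,b,c)$ via the left branch, one for $(a,c,b)$ via the right branch after applying skew-symmetry), is exactly the content of the paper's identity \eqref{f13}.
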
 

The paper \cite{vdB} was followed by many publications (a list can be found at the personal website of Maxime Fairon). Our idea is that the Van den Bergh construction can be regarded as a result ``in type A'', which raises to the problem of finding similar results in type B, C or D. As far as we know, such a problem has not been considered before.

In our understanding, Van den Bergh's construction refers to type A because it is tied to the general linear groups which play the role of symmetry groups. (A manifestation of this fact is that the group $\GL(d,\kk)$ acts, in a natural way, on the space $\Rep(A,d)$ and on the algebra $\OO(A,d)$, and the Poisson bracket $\eqref{vdB formula}$ is invariant under this action.) Our aim is to describe a construction in which the general linear groups are replaced by other classical groups, that is, the orthogonal and symplectic groups, which form the classical series B, D, and C.  

Recall that the orthogonal and symplectic groups are extracted from the general linear groups with the use of involutions. Thus, it is not surprising that involutions play an important role in our work too.  

\subsection{Involutive representation spaces and their coordinate rings}

By an \emph{involution} on an associative $\kk$-algebra we always mean a $\kk$-linear antiautomorphism whose square is the identity map. According to this, an \emph{involutive algebra} is a pair $(A,\phi)$, where $A$ is an associative algebra and $\phi$ is its involution.\footnote{Warning: this terminology diverges with the standard terminology of operator algebras, where an involution is supposed to be not a linear but antilinear map over the ground field $\kk=\C$.}

\begin{example}[Involutions on free algebras]\label{examp_1}
Here are two examples of involutions on $A=\kk\langle\al_1,\dots,\al_L\rangle$, denoted by $\phi^+$ and $\phi^-$: their action on the monomials in the generators is given by 
\begin{equation}\label{eq1.D1}
\phi^+(\al_{i_1}\dots\al_{i_k})=\al_{i_k}\dots\al_{i_1}
\end{equation}
and
\begin{equation}\label{eq1.D2}
\phi^-(\al_{i_1}\dots\al_{i_k})=(-1)^k \al_{i_k}\dots\al_{i_1}.
\end{equation}
\end{example}

\begin{example}[Involutions on matrix algebras]\label{examp_3}
Let $\langle-,-\rangle$ be a nondegenerate  bilinear form on $\kk^d\times \kk^d$, symmetric or skew-symmetric. We introduce a linear map 
$$
\tau: \Mat(d,\kk)\to \Mat(d, \kk), \qquad M\mapsto M^\tau=\tau(M),
$$
defined from the relation
$$
\langle M\xi,\eta\rangle=\langle\xi, M^\tau\eta\rangle, \qquad \xi,\eta\in\kk^d.
$$
That is, $\tau$ is the matrix transposition with respect to $\langle-,-\rangle$. Obviously, $\tau$ is an involution of the algebra $\Mat(d,\kk)$. Conversely, any involution of $\Mat(d,\kk)$ has this form (a small exercise in linear algebra). Thus, the involutions of the matrix algebras are classified according to the classification (up to natural equivalence) of the nondegenerate symmetric/skew symmetric forms. We conclude that there are three types of involutions $\tau$: type B (the form is symmetric, $d$ is odd), type C (the form is skew-symmetric, $d$ is even), and type D (the form is symmetric, $d$ is necessarily even). Note that because $\kk$ is algebraically closed, there is only one equivalence class of nondegenerate symmetric forms in each dimension $d$. 
\end{example}

\begin{definition}\label{def1} 
Let $(A,\phi)$ be an involutive algebra. Given a pair $(d,\tau)$, where $d$ is a positive integer and $\tau$ is an involution on the matrix algebra $\Mat(d,\kk)$, we  set 
\begin{equation}\label{eq1.A}
\Rep^{\phi,\tau}(A,d):=\{T\in\Rep(A,d): T\circ\phi=\tau\circ T\}.
\end{equation}
We call $\Rep^{\phi,\tau}(A,d)$ the \emph{involutive} representation space of $(A,\phi)$ corresponding to $(d,\tau)$. 
\end{definition}

The relation $T\circ\phi=\tau\circ T$ means that $T(\phi(a))=\tau(T(a))$ for every $a\in A$. Thus, elements of $\Rep^{\phi,\tau}(A,d)$ are morphisms of involutive algebras 
$$
T: (A,\phi)\to(\Mat(d,\kk),\tau)
$$ 
preserving the unity. 

Next, based on \eqref{eq1.A} and mimicking  the definition of the algebras $\OO(A,d)$, we can introduce the notion of coordinate rings in the context of involutive representation spaces. We call these new objects \emph{twisted coordinate rings} and denote them by  $\OO(A,d)^{\phi,\tau}$. These are certain quotients of the algebras $\OO(A,d)$. For instance, if $\tau$ is the conventional matrix transposition, then $\OO(A,d)^{\phi,\tau}$ is obtained from $\OO(A,d)$ by imposing the additional symmetry relations 
$$
(\phi(a))_{ij}=a_{ji}, \qquad a\in A, \quad 1\le i,j\le d.
$$

\subsection{The twisted version of Van den Bergh's construction}\label{sect1.5}

\begin{definition}\label{def_1.7}
Let $(A,\phi)$ be an involutive algebra. We say that a double Poisson bracket $\bl-,-\br$ on $A$ is \emph{$\phi$-adapted} if 
\begin{equation}\label{f2}
\phi^{\otimes2}(\bl a, b\br)=\bl \phi(a),\phi(b)\br^\circ, \qquad a,b\in A.
\end{equation}
\end{definition}

For instance, the bracket from Example \ref{examp_2} is adapted to the involution $\phi^-$ from Example \ref{examp_1} but not to the involution $\phi^+$. 

The relation \eqref{f2} means that $\phi$ is an \textit{antiautomorphism of the double bracket} (cf. the definition of morphisms of double brackets in  \cite[section 2.1.1]{FaironMorphism}).

Now we are in a position to state our main result. 

As above, we assume that $A$ is a finitely generated unital associative algebra over $\kk$, endowed with an involutive antiautomorphism $\phi$. Let $\bl-,-\br$ be an arbitrary double Poisson bracket on $A$, which is  $\phi$-adapted in the sense of Definition \ref{def_1.7}.  Next, let $\tau: \Mat(d,\kk)\to \Mat(d,\kk)$ denote an involutive antiautomorphism. Finally, let $\{E_{ij}\}$ be the natural basis of $\Mat(d,\kk)$ formed by the matrix units, and write the action of $\tau$ in this basis as
\begin{equation*}
    \tau(E_{ij})=\sum_{m,n=1}^{d} \tau_{ij}^{mn}E_{mn}.
\end{equation*}

\begin{theorem}\label{thm1.A} 
There exists a modification of the Van den Bergh's formula \eqref{vdB formula}, which gives rise to certain Poisson brackets $\{-,-\}_{\phi,\tau}$ on all twisted coordinate rings of the form $\OO(A,d)^{\phi,\tau}$. Specifically, $\{-,-\}_{\phi,\tau}$ is given on the generators by the following expression{\rm:}
\begin{equation}\label{f12}
    \{a_{ij},b_{kl}\}_{\phi,\tau}=\bl a,b\br'_{kj}\bl a,b\br''_{il}+\sum_{m,n=1}^{d}\tau_{mn}^{ij}\bl \phi(a),b\br'_{kn}\bl \phi(a),b\br''_{ml}
\end{equation}
for $a,b\in A$ and $i,j,k,l\in\{1,\dots,d\}$.
\end{theorem}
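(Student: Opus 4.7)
The proof will follow Van den Bergh's argument for Proposition~\ref{prop1.A}, tracking the effect of the correction term in \eqref{f12} at each step. As a preliminary reformulation, I introduce the $\kk$-algebra involution $\Sigma\colon\OO(A,d)\to\OO(A,d)$ determined on generators by $\Sigma(a_{ij}):=\sum_{m,n}\tau^{ij}_{mn}\phi(a)_{mn}$. A short calculation using the identity $\delta^v_s\tau^{ij}_{ut}=\sum_r\tau^{ir}_{st}\tau^{rj}_{uv}$ (a consequence of $\tau$ being an antiautomorphism of $\Mat(d,\kk)$), together with $\phi$ being an antiautomorphism of $A$, shows that $\Sigma$ is well-defined; and $\Sigma^2=\mathrm{id}$ follows from $\phi^2=\tau^2=\mathrm{id}$. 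The twisted relations then become $\Sigma(a_{ij})=a_{ij}$, so $\OO(A,d)^{\phi,\tau}=\OO(A,d)/I$ with $I$ generated by $\{a_{ij}-\Sigma(a_{ij})\}$, and on generators \eqref{f12} reads
\[
\{a_{ij},b_{kl}\}_{\phi,\tau}\;=\;\{a_{ij},b_{kl}\}+\{\Sigma(a_{ij}),b_{kl}\},
\]
where $\{-,-\}$ denotes the Van den Bergh bracket.

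I then define the bracket on $\OO(A,d)^{\phi,\tau}$ by extending the formula via the Leibniz rule in both slots, and verify well-definedness. Compatibility with the multiplicative relations on the right is inherited from Van den Bergh, since both summands of \eqref{f12} are derivations in the second argument. Compatibility on the left fails in $\OO(A,d)$ itself, but the obstruction is a combination of elements of $I$ that vanishes modulo $I$ as a consequence of the antiautomorphism identity for $\tau$ together with the twisted relations. The cleanest step is compatibility with the twisted relations themselves: a direct expansion using $\Sigma^2=\mathrm{id}$ yields $\{\Sigma(a_{ij}),b_{kl}\}_{\phi,\tau}=\{\Sigma(a_{ij}),b_{kl}\}+\{\Sigma^2(a_{ij}),b_{kl}\}=\{a_{ij},b_{kl}\}_{\phi,\tau}$ identically in $\OO(A,d)$, so $\{a_{ij}-\Sigma(a_{ij}),b_{kl}\}_{\phi,\tau}=0$. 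Antisymmetry on generators is verified by examining the four terms of $\{a_{ij},b_{kl}\}_{\phi,\tau}+\{b_{kl},a_{ij}\}_{\phi,\tau}$: a Van den Bergh pair cancels by the antisymmetry of $\bl-,-\br$, while the two cross terms involving $\bl\phi(a),b\br$ and $\bl\phi(b),a\br$ match after invoking the identity $\bl\phi(b),a\br=-\phi^{\otimes2}(\bl\phi(a),b\br)$ (obtained by combining \eqref{f2} with the antisymmetry of $\bl-,-\br$) together with the twisted relations in $\OO(A,d)^{\phi,\tau}$.

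The main obstacle is the Jacobi identity. Expanding the cyclic sum $\{a_{ij},\{b_{kl},c_{pq}\}_{\phi,\tau}\}_{\phi,\tau}+\mathrm{cyc}$ via \eqref{f12} produces $2^2\cdot3=12$ terms, organized by the placement of the $\Sigma$-corrections: three with no $\Sigma$, six with a single $\Sigma$ on either the outer or inner first slot, and three with $\Sigma$ on both. The no-$\Sigma$ group cancels by the double Jacobi identity \eqref{f1} for $(a,b,c)$, exactly as in Van den Bergh. The remaining nine terms cancel modulo $I$ by combining the double Jacobi identity \eqref{f1} applied to triples with one or more arguments replaced by $\phi$ (namely $(\phi(a),b,c)$, $(\phi(a),\phi(b),c)$, and their cyclic variants) with the $\phi$-adaptedness condition \eqref{f2}, which serves to migrate $\phi$'s across the double bracket. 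The combinatorics is the most delicate part of the proof: the bookkeeping parallels Van den Bergh's verification in \cite[Proposition~7.5.2]{vdB}, but each of his steps is duplicated by the two-term structure of \eqref{f12}, and the matching of cross-terms relies essentially on both \eqref{f2} and the antiautomorphism identity for $\tau$ that makes $\Sigma$ an involution.
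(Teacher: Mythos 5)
Your overall strategy is the one the paper follows: rewrite the second summand of \eqref{f12} as the Van den Bergh bracket precomposed with a substitution built from $\phi$ and $\tau$ (your $\Sigma$ plays exactly the role of the paper's identification $\phi(a)\otimes x=a\otimes\tau^*(x)$, and your observation that $\{a_{ij}-\Sigma(a_{ij}),b_{kl}\}_{\phi,\tau}=0$ identically is the paper's Remark \ref{rem2}), then verify skew-symmetry, compatibility with the defining relations of $\OO(A,d)^{\phi,\tau}$, and the Jacobi identity. The skew-symmetry and ideal-compatibility parts of your sketch are correct in outline and correspond to Lemma \ref{lem1}(ii) and the proofs of Proposition \ref{th1.B} and Theorem \ref{thm3.A}.

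The gap is in the Jacobi identity, which is the technical core of the theorem and which you only assert. Two concrete problems. First, the cancellation mechanism you describe is not the one that works: you propose to absorb the terms carrying two $\phi$'s (those of the shape $\bl \phi(a),\bl \phi(b),c\br\br$) into double Jacobi identities for triples such as $(\phi(a),\phi(b),c)$ ``and their cyclic variants.'' But the cyclic sum produces, e.g., $\bl \phi(b),\bl \phi(c),a\br\br$ and $\bl \phi(c),\bl \phi(a),b\br\br$, which are not the cyclic companions of $\bl \phi(a),\bl \phi(b),c\br\br$ inside the Jacobiator of $(\phi(a),\phi(b),c)$; and counting terms, six Jacobiators (three single-$\phi$ plus three double-$\phi$) would require more summands than the expansion actually contains. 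What actually happens (proof of Proposition \ref{th1}) is that the double-$\phi$ terms must first be \emph{converted} into single-$\phi$ terms using the $\phi$-adaptedness identity in the form of Lemma \ref{lem1}(i), combined with the nontrivial compatibility of $\tau^*$ with the index-permutation operator $P$ (Lemma \ref{prop 1} and Corollary \ref{corollary1}, which rest on $\tau^{\otimes2}(\sigma)=\sigma$); only after this conversion does the $24$-term sum organize into exactly four Jacobiators, for $(a,b,c)$, $(\phi(a),b,c)$, $(a,\phi(b),c)$ and $(a,b,\phi(c))$, as in \eqref{f48}. Second, even granting the correct grouping, the matching of the $\tau$-indices across the cross terms is precisely where the computation could fail: your sketch nowhere verifies the identity $(\tau^*)^{\otimes2}P=(12)P(\tau^*)^{\otimes2}$ (or its coordinate form), on which both the skew-symmetry and the Jacobi bookkeeping depend. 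As written, ``the remaining nine terms cancel modulo $I$'' is a restatement of what must be proved rather than a proof of it.
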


The double sum on the right-hand side can be reduced to a single term under a suitable presentation of the form $\langle-,-\rangle$ associated with $\tau$. For instance, in the case of symmetric form, one can take as $\tau$ the conventional matrix transposition, and then \eqref{f12} turns into
\begin{equation}\label{eq1.C}
\{a_{ij},b_{kl}\}_{\phi,\tau}=\bl a,b\br'_{kj}\bl a,b\br''_{il} + \bl \phi(a),b\br'_{ki}\bl \phi(a),b\br''_{jl}.
\end{equation}

Theorem \ref{thm1.A} was guessed from a simple concrete computation that we reproduce in the end of the paper, see Section \ref{sect5}.  As was shown in \cite{Ols}, the KKS bracket from Example \ref{examp_2} arises in the context of the so-called centralizer construction related to the general linear Lie algebras. We initially extended the computation from  \cite[Proposition 5.2]{Ols} to the orthogonal and symplectic Lie algebras, and this led us to the expression like the one on the right-hand side of \eqref{eq1.C}.

\subsection{Organization of the paper}

In Section \ref{sect2}, we restate the Van den Bergh construction (Proposition \ref{prop1.A} above) in a coordinate-free notation and provide a detailed proof. 

In Section \ref{sect3}, we rewrite in the same fashion the statement of Theorem \ref{thm1.A} (see Theorem \ref{thm3.A}). This enables us to give a proof in which there is no need to write down explicitly the involution $\tau: \Mat(d,\kk)\to\Mat(d,\kk)$ and distinguish between the orthogonal and symplectic cases: the proof is uniform. 

In Section \ref{sect4}, we show that there are plenty of $\phi$-adapted double Poisson brackets on the free algebras. Here we rely on results of Pichereau and Van de Weyer \cite{pich} and Odesskii-Rubtsov-Sokolov \cite{ORS}.

The final Section \ref{sect5} contains our initial computation mentioned above.

\section{The Van den Bergh  construction revisited}\label{sect2}

Before we prove the main result, Theorem \ref{thm3.A}, we would like to slightly recast Van den Bergh's proof of Proposition \ref{prop1.A}, which is Proposition 7.5.2 from \cite{vdB}. We need this in order to better explain the machinery that will be subsequently used for the twisted case, where the volume of computations increases, roughly speaking, by $4$ times due to additional summands in \eqref{f12} related to $\tau$.

\subsection{A coordinate-free notation}\label{sect2.1}

Recall that the non-twisted coordinate ring $\OO(A,d)$ was defined in section \ref{sect1.2} as an associative algebra generated by symbols $a_{ij}$, where $a\in A$ and $i,j$ are indices from $1$ to $d$. For our purposes it is convenient to describe $\OO(A,d)$ in a coordinate-free notation. 

Consider the vector space $\operatorname{Mat}^*(d,\kk)$, dual to $\operatorname{Mat}(d,\kk)$, and let $\{E^*_{ij}\}$ be the dual basis to $\{E_{ij}\}$. The space $\Mat^*(d,\kk)$ is a coalgebra with coproduct $\Delta$ and counit $\eps$ defined by
\begin{equation*}
    \Delta(E_{ij}^*):=\sum_{p=1}^d E_{ip}^*\otimes E_{pj}^*, \quad \eps(E^*_{ij}):=\de_{ij}.
\end{equation*}
We also use the shorthand notation
\begin{equation*}
    \Delta(x)=\Delta(x)'\otimes\Delta(x)'', \quad x\in\Mat^*(d,\kk).
\end{equation*}

Let us denote the vector space $A\otimes\operatorname{Mat}^*(d,\kk)$ by $L(A,d)$. 
Now we can reformulate the definition of the algebra $\OO(A,d)$ from section \ref{sect1.2}.

\begin{definition}\label{def2.A}
$\OO(A,d)$ is the commutative algebra generated by the vector space $L(A,d)$ with the following additional relations:
\begin{equation*}
    (ab)\otimes x=\left(a\otimes \Delta(x)'\right)\left(b\otimes \Delta(x)''\right), \qquad 1\otimes x=\eps(x),
\end{equation*}
where $a,b\in A$, $x\in\operatorname{Mat}^*(d,\kk)$.
\end{definition}

For $a\in A$ and $x\in \operatorname{Mat}^*(d,\kk)$, the image of $a\otimes x$ in $\OO(A,d)$ will be denoted by $(a|x)$. The correspondence between the old notation and the new notation is the following: 
\begin{equation*}
    a_{ij}=\left(a\middle|E^*_{ij}\right).
\end{equation*}

To rewrite Van den Bergh's Poisson bracket on $\OO(A,d)$ in this new notation we need to introduce an operator that is responsible for the permutation of indices $(i,j,k,l)\mapsto (k,j,i,l)$: 
\begin{equation}\label{eq2.A}
\begin{aligned}
        P\colon \operatorname{Mat}^*(d,\kk)\otimes \operatorname{Mat}^*(d,\kk)&\rightarrow \operatorname{Mat}^*(d,\kk)\otimes \operatorname{Mat}^*(d,\kk),\\
        E^*_{ij}\otimes E^*_{kl}&\mapsto E^*_{kj}\otimes E^*_{il}.
\end{aligned}
\end{equation}

\begin{remark}\label{rem3}
Here is a useful interpretation of the operator $P$. Let
    \begin{equation}\label{eq2.B}
        \sigma:=\sum\limits_{i,j} E_{ij}\otimes E_{ji}\in \operatorname{Mat}(d,
\kk)^{\otimes 2}
    \end{equation}
and let $L_{\sigma}:\operatorname{Mat}(d,\kk)^{\otimes 2}\rightarrow\operatorname{Mat}(d,\kk)^{\otimes 2}$ be the operator of left multiplication by $\sigma$ (with respect to the natural algebra structure in $\operatorname{Mat}(d,\kk)^{\otimes 2}$). It is immediately checked that $P$ is dual to $L_{\sigma}$. We use this simple remark in Proposition \ref{prop 1} below. 
\end{remark}

With this new notation in mind the Poisson bracket $\{-,-\}$ from Proposition \ref{prop1.A} can be rewritten as
\begin{equation}\label{f27}
    \left\{(a|x),(b|y)\right\}=\Bigl(\{\!\!\{a,b\}\!\!\} \mathlarger{\mathlarger{|}} P(x,y)\Bigr),
\end{equation}
where we agree that
\begin{equation}\label{f24}
    (c'\otimes c''| x\otimes y):=(c'|x)(c''|y) 
\end{equation}
and write $P(x,y)$ instead of $P(x\otimes y)$.

The next lemma ties together several useful properties of the operator $P$. Before stating them we shall recall that the symmetric group $\mathfrak{S}_3$ naturally acts on the tensor cube of $\operatorname{Mat}^*(d,\kk)$ by permuting the factors in the same way as it acts on the tensor cube of the algebra $A$, see Section \ref{section1.1}.

\begin{lemma}\label{lemma2}
{\rm(i)} Let $(12)$ stand for the permutation that swaps two tensor factors in $\operatorname{Mat}^*(d,\kk)^{\otimes 2}$, then 
        \begin{equation}\label{f16}
            (12)P=P(12).
        \end{equation}

{\rm(ii)} Consider the tensor cube of\/ $\operatorname{Mat}^*(d,\kk)$ and introduce operators $P_{12}$ and $P_{23}$ which act like $P$ on the left and right factors of the tensor cube respectively. Then 
        \begin{equation}\label{f20}
            P_{12}P_{23}(123)=(123)P_{12}P_{23}.    
        \end{equation}

{\rm(iii)} Introduce operators $\Delta_L$ and $\Delta_R$ which act like the coproduct $\Delta$ on the first and second factors of\/ $\operatorname{Mat}^*(d,\kk)^{\otimes 2}$ respectively. Then
        \begin{align}
            \Delta_L P&=P_{23}(12)\Delta_{R} \label{f25}\\
            \Delta_R P&=P_{12}\Delta_R.\label{f26}
        \end{align}    
\end{lemma}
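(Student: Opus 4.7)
The plan is to verify each of the three parts of the lemma by a direct calculation on the natural basis of matrix unit duals. In each case I would apply both sides of the claimed identity to a generic basis vector and check equality of the resulting explicit expressions, using only the definition \eqref{eq2.A} of $P$, the definition of the coproduct $\Delta$ given in Section \ref{sect2.1}, and the prescribed action of $\mathfrak S_3$ on the tensor cube.

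For (i), this is immediate: the operator $P$ sends $E^*_{ij}\otimes E^*_{kl}$ to $E^*_{kj}\otimes E^*_{il}$, and from this formula one sees at once that subsequently flipping the two factors gives the same result as first flipping the input and then applying $P$. For (ii), one tracks the indices through $(123)\cdot E^*_{ij}\otimes E^*_{kl}\otimes E^*_{mn}=E^*_{mn}\otimes E^*_{ij}\otimes E^*_{kl}$ and through the two actions of $P$ on neighboring factors; both sides collapse to $E^*_{kn}\otimes E^*_{mj}\otimes E^*_{il}$. For (iii), both identities reduce to a single sum over the coproduct index: for instance, one finds $\Delta_L P(E^*_{ij}\otimes E^*_{kl})=\sum_p E^*_{kp}\otimes E^*_{pj}\otimes E^*_{il}$, and the operator $P_{23}(12)\Delta_R$ applied to the same input produces exactly this sum; the second identity $\Delta_R P=P_{12}\Delta_R$ is checked by an entirely parallel expansion.

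A more conceptual route, which I would at least note in passing, exploits the duality from Remark \ref{rem3}: $P$ is dual to left multiplication by the flip element $\sigma\in\Mat(d,\kk)^{\otimes 2}$, whose own invariance under swapping the tensor factors and whose compatibility with the product in $\Mat(d,\kk)^{\otimes 3}$ yield, after passing to duals, the identities (i)--(iii) above (recall that the product on $\Mat(d,\kk)$ is dual to the coproduct on $\Mat^*(d,\kk)$). I do not expect a genuine obstacle anywhere: all four identities amount to careful bookkeeping of indices and the correct sides on which $P$ and $\Delta$ act. The lemma is purely instrumental, packaging the compatibilities of $P$ with the coalgebra structure on $\Mat^*(d,\kk)$ into a form that will be directly invoked in the coordinate-free reformulation of Van den Bergh's proof.
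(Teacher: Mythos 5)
Your proposal is correct and follows essentially the same route as the paper: a direct verification of all four identities on the basis elements $E^*_{ij}\otimes E^*_{kl}$ (resp.\ $E^*_{ij}\otimes E^*_{kl}\otimes E^*_{mn}$), with the same intermediate expressions, e.g.\ both sides of \eqref{f20} collapsing to $E^*_{kn}\otimes E^*_{mj}\otimes E^*_{il}$ and $\Delta_L P(E^*_{ij}\otimes E^*_{kl})=\sum_p E^*_{kp}\otimes E^*_{pj}\otimes E^*_{il}$. Your passing remark about deducing the identities by dualizing properties of $L_\sigma$ is a legitimate alternative the paper only hints at in Remark \ref{rem3}, but since your main argument is the same index bookkeeping the paper performs, no further comparison is needed.
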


\begin{remark}
Item (i) will be used to prove the skew-symmetry of the Poisson bracket \eqref{f27}. Item (ii)  will be used to prove the Jacobi identity for it.  Item (iii) will be helpful in proving that \eqref{f27} is compatible with the relations in $\OO(A,d)$.
\end{remark}

\begin{proof}[Proof of Lemma \ref{lemma2}]
    The relation \eqref{f16} is obvious. Let us prove \eqref{f20}. The left-hand side of \eqref{f20} evaluated on the basis element $E^*_{ij}\otimes E^*_{kl}\otimes E^*_{mn}$ of the tensor cube of $\operatorname{Mat}^*(d,\kk)$ equals
    \begin{multline*}
        P_{12}P_{23}(123)\left(E^*_{ij}\otimes E^*_{kl}\otimes E^*_{mn}\right)=P_{12}P_{23}\left(E^*_{mn}\otimes E^*_{ij}\otimes E^*_{kl}\right)=P_{12}\left(E^*_{mn}\otimes E^*_{kj}\otimes E^*_{il}\right)\\
        =E^*_{kn}\otimes E^*_{mj}\otimes E^*_{il},
    \end{multline*} 
    while the right-hand side is
    \begin{multline*}
        (123)P_{12}P_{23}\left(E^*_{ij}\otimes E^*_{kl}\otimes E^*_{mn}\right)=(123)P_{12}\left(E^*_{ij}\otimes E^*_{ml}\otimes E^*_{kn}\right)\\
        =(123)\left(E^*_{mj}\otimes E^*_{il}\otimes E^*_{kn}\right)= E^*_{kn}\otimes E^*_{mj}\otimes E^*_{il}.
    \end{multline*}

    To prove \eqref{f25} and \eqref{f26} we evaluate the left and right-hand sides on the basis element $E^*_{ij}\otimes E^*_{kl}$ of $\operatorname{Mat}^*(d,\kk)^{\otimes 2}$ and compare computation results. For \eqref{f25} we compute
    \begin{equation*}
         \Delta_L P\left(E^*_{ij}\otimes E^*_{kl}\right)=\Delta_L\left(E^*_{kj}\otimes E^*_{il}\right)=\sum\limits_{p=1}^{d}E^*_{kp}\otimes E^*_{pj}\otimes E^*_{il}
    \end{equation*}
    and 
    \begin{multline*}
        P_{23}(12)\Delta_{R}\left(E^*_{ij}\otimes E^*_{kl}\right)=\sum\limits_{p=1}^{d} P_{23}(12)\left(E^*_{ij}\otimes E^*_{kp}\otimes E^*_{pl}\right)=\sum\limits_{p=1}^{d} P_{23}\left(E^*_{kp}\otimes E^*_{ij}\otimes E^*_{pl}\right)\\
        =\sum\limits_{p=1}^{d} E^*_{kp}\otimes E^*_{pj}\otimes E^*_{il}.
    \end{multline*}

    To prove \eqref{f26} we do the following computations
    \begin{equation*}
        \Delta_R P\left(E^*_{ij}\otimes E^*_{kl}\right)=\Delta_R\left(E^*_{kj}\otimes E^*_{il}\right)=\sum\limits_{p=1}^{d}E^*_{kj}\otimes E^*_{ip}\otimes E^*_{pl}
    \end{equation*}
    and
    \begin{equation*}
        P_{12}\Delta_R\left(E^*_{ij}\otimes E^*_{kl}\right)=\sum\limits_{p=1}^{d}P_{12}\left(E^*_{ij}\otimes E^*_{kp}\otimes E^*_{pl}\right)=\sum\limits_{p=1}^{d} E^*_{kj}\otimes E^*_{ip}\otimes E^*_{pl}.
    \end{equation*}
\end{proof}

The proof of Proposition \ref{prop1.A} is divided into two parts, which occupy the next two subsections. By the very definition of the coordinate ring $\OO(A,d)$ (see subsection \ref{sect2.1}), it is a quotient of the symmetric algebra $S(L(A,d))$. We first construct a Poisson bracket on $S(L(A,d))$ and then check that it can be descended to $\OO(A,d)$.

\subsection{Poisson bracket on $S(L(A,d))$}

For any double bracket $\{\!\!\{-,-\}\!\!\}$ on $A$ we set (see \cite[section 2.3]{vdB})
\begin{equation}\label{f49}
    \{\!\!\{a,b,c\}\!\!\}=\Bigl\{\!\!\!\Bigl\{a,\{\!\!\{b,c\}\!\!\}\Bigr\}\!\!\!\Bigr\}_\Left+(123)\cdot\Bigl\{\!\!\!\Bigl\{b,\{\!\!\{c,a\}\!\!\}\Bigr\}\!\!\!\Bigr\}_\Left+(123)^2\cdot\Bigl\{\!\!\!\Bigl\{c,\{\!\!\{a,b\}\!\!\}\Bigr\}\!\!\!\Bigr\}_\Left.
\end{equation}
In this notation, the double Jacobi identity \eqref{f1} is written as $\bl a,b,c\br=0$.

The next proposition deals with a Poisson bracket on the symmetric algebra $S\left(L(A,d)\right)$ but not on its quotient $\OO(A,d)$. In this proposition we use the notation
\begin{equation*}
    c\overset{2}{\otimes} x:=(c'\otimes x')(c''\otimes x'')\in S^2\left(L(A,d)\right),
\end{equation*}
where $c=c'\otimes c''\in A^{\otimes 2}$ and $x=x'\otimes x''\in\operatorname{Mat}^*(d,\kk)^{\otimes 2}$. We adopt a similar convention for the third symmetric power $S^3\left(L(A,d)\right)$ together with $c\in A^{\otimes 3}$ and $x\in\operatorname{Mat}^*(d,\kk)^{\otimes 3}$ as well, i.e.
\begin{equation*}
    c\overset{3}{\otimes} x:=(c'\otimes x')(c''\otimes x'')(c'''\otimes x''')\in S^3\left(L(A,d)\right).
\end{equation*}

Recall that the symmetric group $\mathfrak{S}_3$ naturally acts both on $A^{\otimes 3}$ and $\operatorname{Mat}^*(d,\kk)^{\otimes 3}$.

Due to commutativity of the symmetric algebra we have
\begin{equation}\label{f19}
(s\cdot c)\overset{3}{\otimes} x=c\overset{3}{\otimes}(s^{-1}\cdot x),    
\end{equation}
for $s\in\mathfrak S_3$, $c\in A^{\otimes 3}$, and $x\in\operatorname{Mat}^*(d,\kk)^{\otimes 3}$.

Recall that $L(A,d)$ stands for the vector space $A\otimes\operatorname{Mat}^*(d,\kk)$.

\begin{proposition}\label{th2}
    Let $A$ be an associative $\kk$-algebra and $\bl-,-\br$ be a double Poisson bracket on $A$. For each $d=1,2,\dots$, the formula
\begin{equation}\label{vdB formula.2}
\left\{a\otimes x,b\otimes y\right\}=\{\!\!\{a,b\}\!\!\}\overset{2}{\otimes} P(x,y), \qquad a,b\in A, \quad x,y\in \operatorname{Mat}^*(d,\kk),
\end{equation}
gives rise to a Poisson bracket on the commutative algebra $S\left(L(A,d)\right)$. 
\end{proposition}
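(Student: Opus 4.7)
The plan is to first extend \eqref{vdB formula.2} from $L(A,d)\times L(A,d)$ to a bilinear map on $S(L(A,d))\times S(L(A,d))$ by demanding the Leibniz rule in each argument. This extension exists and is unique since $S(L(A,d))$ is the free commutative algebra on $L(A,d)$. Then I would verify skew-symmetry and the Jacobi identity on generators only, since both propagate from $L(A,d)$ to all of $S(L(A,d))$ by the usual biderivation arguments.

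For \textbf{skew-symmetry}, the computation is
\begin{equation*}
\{b\otimes y,\, a\otimes x\} = \bl b,a\br \overset{2}{\otimes} P(y\otimes x).
\end{equation*}
The double-bracket axiom gives $\bl b,a\br = -(12)\cdot\bl a,b\br$, and Lemma \ref{lemma2}(i) gives $P(y\otimes x) = (12)\cdot P(x\otimes y)$. The commutativity of $S(L(A,d))$ provides the $\overset{2}{\otimes}$-analogue of \eqref{f19}, which absorbs both copies of $(12)$ into a trivial permutation; what remains is $-\bl a,b\br\overset{2}{\otimes}P(x\otimes y) = -\{a\otimes x, b\otimes y\}$.

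For the \textbf{Jacobi identity} on generators, I would apply the Leibniz rule to the inner bracket in $\{a\otimes x, \{b\otimes y, c\otimes z\}\}$. Writing $\bl b,c\br$ and $P(y\otimes z)$ in Sweedler form, each of the three cyclic summands expands into a \emph{left} and a \emph{right} contribution. The left contribution from the first summand is $\bl a,\bl b,c\br\br_\Left \overset{3}{\otimes} P_{12}P_{23}(x\otimes y\otimes z)$, and the right contribution is $(\bl b,c\br'\otimes \bl a,\bl b,c\br''\br) \overset{3}{\otimes} Q(x\otimes y\otimes z)$, where $Q := P_{23}\circ(12)\circ P_{23}$ acts on $\operatorname{Mat}^*(d,\kk)^{\otimes 3}$. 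Using Lemma \ref{lemma2}(ii) — the identity $P_{12}P_{23}(123) = (123)P_{12}P_{23}$ — together with \eqref{f19} to pull the cyclic permutation from the $\operatorname{Mat}^*$-side to the $A$-side, the three left contributions consolidate into
\begin{equation*}
\bl a,b,c\br \overset{3}{\otimes} P_{12}P_{23}(x\otimes y\otimes z),
\end{equation*}
which vanishes by the double Jacobi identity \eqref{f1}. A direct computation in the style of Lemma \ref{lemma2}(ii) yields the twisted equivariance $Q(123) = (132) Q$; combining this with two applications of skew-symmetry of $\bl-,-\br$ (which convert each right expression $\bl b,c\br'\otimes\bl a,\bl b,c\br''\br$ into a cyclically shifted left form $\bl\cdot,\bl\cdot,\cdot\br\br_\Left$ with the inner pair swapped), the three right contributions collect into $-\bl b,a,c\br \overset{3}{\otimes} Q(x\otimes y\otimes z)$, which also vanishes by \eqref{f1} applied to the triple $(b,a,c)$.

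The \textbf{main obstacle} is the right contribution: the operator $Q$ does not commute with $(123)$ on the nose, but only up to the flip $Q(123) = (132)Q$, so one must perform an extra skew-symmetry manipulation before the collected right sum is recognizable as a triple bracket. Once both left and right sums are identified as instances of the vanishing $\bl\cdot,\cdot,\cdot\br$ of the generators (in two different orders), the Jacobi identity follows.
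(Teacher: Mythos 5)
Your proposal is correct and follows essentially the same route as the paper: extend by the Leibniz rule, get skew-symmetry from $(12)P=P(12)$ plus commutativity, and split the Jacobiator into left and right contributions that each assemble into a triple bracket $\bl\cdot,\cdot,\cdot\br$ killed by the double Jacobi identity. The only (cosmetic) divergence is that the paper normalizes the right contributions by rewriting $Q=P_{23}(12)P_{23}$ as $P_{12}P_{23}(23)$ via Lemma \ref{lemma2}(i) and lands on $-\bl a,c,b\br\overset{3}{\otimes}P_{12}P_{23}(x,z,y)$, whereas you keep $Q$ and use the equivariance $Q(123)=(132)Q$ to land on $-\bl b,a,c\br\overset{3}{\otimes}Q(x,y,z)$; both identities check out, and each conversion of a $\Right$-term to a cyclically shifted $\Left$-term needs only one application of skew-symmetry, not two.
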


\begin{remark}\label{rem2.A}
The fact that the bracket \eqref{vdB formula.2} satisfies the Jacobi identity will follow from a more general claim. Namely, let  $\{\!\!\{-,-\}\!\!\}$ be a double bracket on $A$, which is not necessarily a double Poisson bracket, and let $f_1=a\otimes x$, $f_2=b\otimes y$, and $f_3=c\otimes z$ be elements of $L(A,d)$. Then the following identity holds in $S\left(L(A,d)\right)$:
\begin{multline}\label{f13}
    \Bigl\{f_1,\{f_2,f_3\}\Bigr\}+\Bigl\{f_2,\{f_3,f_1\}\Bigr\}+\Bigl\{f_3,\{f_1,f_2\}\Bigr\}\\
    =\{\!\!\{a,b,c\}\!\!\}\overset{3}{\otimes} P_{12}P_{23}\left(x,y,z\right)-\{\!\!\{a,c,b\}\!\!\}\overset{3}{\otimes}P_{12}P_{23}\left(x,z,y\right),
\end{multline}
where $P_{12}P_{23}(x_1,x_2,x_3):=P\left(x_1,P\left(x_2,x_3\right)'\right)\otimes P\left(x_2,x_3\right)''\in\operatorname{Mat}^*(d,\kk)^{\otimes 3}$.
\end{remark}

\begin{proof}[Proof of Proposition \ref{th2}]
    First of all we shall note that the right hand side of \eqref{vdB formula.2} is skew symmetric due to the skew symmetry property of $\bl-,-\br$  (Definition \ref{def1.A}), commutativity of the symmetric algebra, and \eqref{f16}. 
    
 Now we prove \eqref{f13}. By the very definition we have
    \begin{equation*}
        \{f_2,f_3\}=\{\!\!\{b,c\}\!\!\}\overset{2}{\otimes}P(y,z)=\Bigl(\{\!\!\{b,c\}\!\!\}'\otimes P(y,z)'\Bigr)\Bigl(\{\!\!\{b,c\}\!\!\}''\otimes P(y,z)''\Bigr).
    \end{equation*}
    Then by the Leibniz rule we have
    \begin{align}\label{f17}
     \notag
            \Bigl\{f_1,\{f_2,f_3\}\Bigr\}&=\Bigl\{f_1,\{\!\!\{b,c\}\!\!\}'\otimes P(y,z)'\Bigr\}\Bigl(\{\!\!\{b,c\}\!\!\}''\otimes P(y,z)''\Bigr)\\ \notag
            &\phantom{aaaaaaa}+\Bigl(\{\!\!\{b,c\}\!\!\}'\otimes P(y,z)'\Bigr)\Bigl\{f_1,\{\!\!\{b,c\}\!\!\}''\otimes P(y,z)''\Bigr\}\\
            &=\left(\Bigl\{\!\!\!\Bigl\{a,\{\!\!\{b,c\}\!\!\}'\Bigr\}\!\!\!\Bigr\}\overset{2}{\otimes} P(x,P(y,z)')\right)\Bigl(\{\!\!\{b,c\}\!\!\}''\otimes P(y,z)''\Bigr)\\ \notag
            &\phantom{aaaaaaa}+\Bigl(\{\!\!\{b,c\}\!\!\}'\otimes P(y,z)'\Bigr)\left(\Bigl\{\!\!\!\Bigl\{a,\{\!\!\{b,c\}\!\!\}''\Bigr\}\!\!\!\Bigr\}\overset{2}{\otimes} P(x,P(y,z)'')\right).
    \end{align}
    Then we note that the first summand equals 
    \begin{equation*}
        \Bigl\{\!\!\!\Bigl\{a,\{\!\!\{b,c\}\!\!\}\Bigr\}\!\!\!\Bigr\}_{\Left}\overset{3}{\otimes} P_{12}P_{23}(x,y,z).
    \end{equation*}
    By the skew-symmetry of the double bracket $\{\!\!\{-,-\}\!\!\}$ we have
    \begin{equation*}
        \{\!\!\{b,c\}\!\!\}'\otimes \{\!\!\{b,c\}\!\!\}''=-\{\!\!\{c,b\}\!\!\}''\otimes \{\!\!\{c,b\}\!\!\}',
    \end{equation*}
    then we can rewrite the second summand in \eqref{f17} as follows
    \begin{equation}\label{f18}
        -\Bigl\{\!\!\!\Bigl\{a,\{\!\!\{c,b\}\!\!\}\Bigr\}\!\!\!\Bigr\}_{\Left}\overset{3}{\otimes} P_{12}(23)P_{23}(x,y,z),
    \end{equation}
    where $(23)$ is the permutation that swaps the second and the third factors in the tensor cube of $\operatorname{Mat}^*(d,\kk)$. Next, we use \eqref{f16} to rewrite \eqref{f18} in the desired form. 
    
    Now we can tie all the things together
    \begin{multline}\label{f21}
        \Bigl\{f_1,\{f_2,f_3\}\Bigr\}=\Bigl\{\!\!\!\Bigl\{a,\{\!\!\{b,c\}\!\!\}\Bigr\}\!\!\!\Bigr\}_{\Left}\overset{3}{\otimes} P_{12}P_{23}(x,y,z)\\
        -\Bigl\{\!\!\!\Bigl\{a,\{\!\!\{c,b\}\!\!\}\Bigr\}\!\!\!\Bigr\}_{\Left}\overset{3}{\otimes} P_{12}P_{23}(23)(x,y,z).
    \end{multline}
    
    Now we can apply \eqref{f21} to various inputs in order to obtain similar expressions for $\Bigl\{f_2,\{f_3,f_1\}\Bigr\}$ and $\Bigl\{f_3,\{f_1,f_2\}\Bigr\}$. Notice the permutations $(123)$ and $(123)^2$ inside the right tensor factor in formulas below
    \begin{multline*}
        \Bigl\{f_2,\{f_3,f_1\}\Bigr\}=\Bigl\{\!\!\!\Bigl\{b,\{\!\!\{c,a\}\!\!\}\Bigr\}\!\!\!\Bigr\}_{\Left}\overset{3}{\otimes} P_{12}P_{23}(123)^2(x,y,z)\\
        -\Bigl\{\!\!\!\Bigl\{b,\{\!\!\{a,c\}\!\!\}\Bigr\}\!\!\!\Bigr\}_{\Left}\overset{3}{\otimes} P_{12}P_{23}(23)(123)^2(x,y,z)
    \end{multline*}
    and
    \begin{multline*}
        \Bigl\{f_3,\{f_1,f_2\}\Bigr\}=\Bigl\{\!\!\!\Bigl\{c,\{\!\!\{a,b\}\!\!\}\Bigr\}\!\!\!\Bigr\}_{\Left}\overset{3}{\otimes} P_{12}P_{23}(123)(x,y,z)\\
        -\Bigl\{\!\!\!\Bigl\{c,\{\!\!\{b,a\}\!\!\}\Bigr\}\!\!\!\Bigr\}_{\Left}\overset{3}{\otimes} P_{12}P_{23}(23)(123)(x,y,z).
    \end{multline*}

     We would like to move the permutations $(123)$ and $(123)^2$ from the second tensor factors to the first ones. For that we recall
     \begin{equation*}
       (23)(123)=(123)^2(23) 
     \end{equation*}
     and use \eqref{f20} and \eqref{f19} to obtain 
     \begin{multline}\label{f22}
        \Bigl\{f_2,\{f_3,f_1\}\Bigr\}=(123)\cdot\Bigl\{\!\!\!\Bigl\{b,\{\!\!\{c,a\}\!\!\}\Bigr\}\!\!\!\Bigr\}_{\Left}\overset{3}{\otimes} P_{12}P_{23}(x,y,z)\\
        -(123)^2\cdot\Bigl\{\!\!\!\Bigl\{b,\{\!\!\{a,c\}\!\!\}\Bigr\}\!\!\!\Bigr\}_{\Left}\overset{3}{\otimes} P_{12}P_{23}(x,z,y)
    \end{multline}
    and
    \begin{multline}\label{f23}
        \Bigl\{f_3,\{f_1,f_2\}\Bigr\}=(123)^2\cdot\Bigl\{\!\!\!\Bigl\{c,\{\!\!\{a,b\}\!\!\}\Bigr\}\!\!\!\Bigr\}_{\Left}\overset{3}{\otimes} P_{12}P_{23}(x,y,z)\\
        -(123)\cdot\Bigl\{\!\!\!\Bigl\{c,\{\!\!\{b,a\}\!\!\}\Bigr\}\!\!\!\Bigr\}_{\Left}\overset{3}{\otimes} P_{12}P_{23}(x,z,y).
    \end{multline}

    Finally, summing \eqref{f21}, \eqref{f22}, and \eqref{f23} we get \eqref{f13}.
\end{proof}

\subsection{Poisson bracket on $\OO(A,d)$}

Recall that $(a|x)$ for $a\in A$ and $x\in\operatorname{Mat}^*(d,\kk)$ is the image of $a\otimes x\in L(A,d)$ in $\OO(A,d)$. Recall also notation \eqref{f24}. The next result is a reformulation of Proposition \ref{prop1.A}. 

\begin{proposition}\label{th1.B}
    Let $A$ be an associative $\kk$-algebra and $\bl-,-\br$ be a double Poisson bracket on $A$. For each $d=1,2,\dots$, the formula \eqref{vdB formula.2}, rewritten as
\begin{equation}\label{eq2.E}
\left\{(a|x),(b|y)\right\}=\Bigl(\{\!\!\{a,b\}\!\!\} \mathlarger{\mathlarger{|}} P(x,y)\Bigr), \qquad a,b\in A, \quad x,y\in \operatorname{Mat}^*(d,\kk),
\end{equation}
gives rise to a Poisson bracket on the commutative algebra $\OO(A,d)$. 
\end{proposition}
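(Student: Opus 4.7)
The plan is to lift the claim from a statement about $\OO(A,d)$ to one about the symmetric algebra $S(L(A,d))$, where a Poisson bracket is already supplied by Proposition \ref{th2}. Writing $\OO(A,d) = S(L(A,d))/I$ with $I$ the ideal generated by the two families of defining relations from Definition \ref{def2.A}, the task reduces, by the Leibniz rule, to verifying that the Poisson bracket of each such relation with an arbitrary generator $c\otimes z$ of $S(L(A,d))$ lies in $I$.

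For the unit relation $1\otimes x - \eps(x)$, the second summand is a scalar and hence Poisson-central, while $\{1\otimes x, c\otimes z\} = \bl 1,c\br \overset{2}{\otimes} P(x,z)$ vanishes because the Leibniz rule in the second argument forces $\bl 1,c\br = 0$.

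The substantive step is the multiplication relation $(ab)\otimes x \equiv (a\otimes \Delta(x)')(b\otimes \Delta(x)'')$ modulo $I$. I would compute $\{(ab)\otimes x, c\otimes z\}$ by applying the inner-bimodule Leibniz rule for $\bl-,-\br$ in the first argument, $\bl ab,c\br = \bl a,c\br \ast b + a\ast \bl b,c\br$, and compute $\{(a\otimes\Delta(x)')(b\otimes\Delta(x)''), c\otimes z\}$ by the Leibniz rule for the Poisson bracket on $S(L(A,d))$ from Proposition \ref{th2}. Each expression splits into two summands, one carrying $\bl a,c\br$ and one carrying $\bl b,c\br$. Reducing modulo $I$ using $(uv\,|\,y) = (u\,|\,\Delta(y)')(v\,|\,\Delta(y)'')$, both sides become triple products $(u_1|y_1)(u_2|y_2)(u_3|y_3)$ in $\OO(A,d)$, and the matching of the $\operatorname{Mat}^*(d,\kk)$-factors against the $A$-factors is exactly the content of the intertwining identities \eqref{f25} and \eqref{f26} from Lemma \ref{lemma2}(iii); this is the purpose for which those identities were established.

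The main obstacle is the bookkeeping in that final matching step. The Leibniz rule for a double bracket uses \emph{different} bimodule structures in its two arguments (outer in the second, inner in the first), so after expansion the new $A$-symbols appear in different orders on the two sides. Commutativity of $\OO(A,d)$ allows free reordering, but one must carefully track which $A$-symbol is paired with which $\operatorname{Mat}^*(d,\kk)$-symbol before invoking \eqref{f25} and \eqref{f26}. Once the pairings are correctly aligned, the two summands on each side match term-by-term, and the bracket descends to $\OO(A,d)$.
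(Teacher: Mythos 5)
Your proposal is correct and follows essentially the same route as the paper: reduce to showing that the defining ideal $I\subset S(L(A,d))$ is a Poisson ideal for the bracket of Proposition \ref{th2}, check this on the two families of generators (the unit relation being trivial since $\bl 1,c\br=0$), and match the two Leibniz expansions of the product relation via the coproduct--$P$ intertwining identities. The only, harmless, divergence is that you place the product relation in the \emph{first} slot of the bracket and expand via the inner Leibniz rule, whereas the paper places it in the second slot; with your choice the identities needed are not literally \eqref{f25}--\eqref{f26} (which apply $\Delta$ to the second $\operatorname{Mat}^*(d,\kk)$-factor) but their conjugates under the flip --- equivalently, one first invokes skew-symmetry of the Poisson bracket --- and these follow from \eqref{f16} together with \eqref{f25}--\eqref{f26}, or by the same direct computation on basis elements.
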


\begin{proof}
Recall (Definition \ref{def2.A}) that $\OO(A,d)$ is the quotient of $S(L(A,d))$ by the ideal generated by the elements  
$$
(bc)\otimes y-\left(b\otimes \Delta(y)'\right)\left(c\otimes \Delta(y)''\right) \quad \text{and} \quad 1\otimes y- \eps(y),
$$
where $b, c\in A$, $y\in\Mat^*(d,\kk)$. 

Denote this ideal by $I$ and set $\operatorname{Sym}(A,d):=S(L(A,d))$. By virtue of Proposition \ref{th2}, it suffices to prove that $I$ is a Poisson ideal in $\Sym(A,d)$, that is $\{X,Y\}\in I$ for any $X\in \Sym(A,d)$ and $Y\in I$.  Next, it suffices to check this on the generators.  For $Y=1\otimes y- \eps(y)$ this holds for trivial reasons, because 
$\{a\otimes x,1\otimes y \}=\{a\otimes x, \eps(y)\}=0$ for any $a\in A$ and $x, y\in\Mat^*(d,\kk)$. 

Now take arbitrary $a,b,c\in A$ and $x,y\in\operatorname{Mat}^*(d,\kk)$,  and check that
    \begin{equation}\label{eq2.C}
         \Bigl\{a\otimes x, \; (bc)\otimes y-\left(b\otimes \Delta(y)'\right)\left(c\otimes \Delta(y)''\right)\Bigr\}\in I.
    \end{equation}
 By the very definition we have
\begin{equation*}
    \{a\otimes x, (bc)\otimes y\}=\{\!\!\{a,bc\}\!\!\}\overset{2}{\otimes}P(x,y).
\end{equation*}
Then by the Leibniz rule for the double bracket we have
\begin{equation*}
    \{\!\!\{a,bc\}\!\!\}=\{\!\!\{a,b\}\!\!\}c+b\{\!\!\{a,c\}\!\!\}. 
\end{equation*}
So,
\begin{multline}\label{f14}
     \{a\otimes x,(bc)\otimes y\}=\{\!\!\{a,b\}\!\!\}c\overset{2}{\otimes} P(x,y)+b\{\!\!\{a,c\}\!\!\}\overset{2}{\otimes} P(x,y)\\
     =\Bigl(\{\!\!\{a,b\}\!\!\}'\otimes P(x,y)'\Bigr)\Bigl(\{\!\!\{a,b\}\!\!\}''c\otimes P(x,y)''\Bigr)\\
     +\Bigl(b\{\!\!\{a,c\}\!\!\}'\otimes P(x,y)'\Bigr)\Bigl(\{\!\!\{a,c\}\!\!\}''\otimes P(x,y)''\Bigr).
\end{multline}

Take a closer look at the first summand's second factor in \eqref{f14}. It contains an element $\{\!\!\{a,b\}\!\!\}''c$, which we would like to divide into two parts by replacing the expression with the product of two elements of $\operatorname{Sym}(A,d)$ using a relation from the ideal $I$. 

Below the symbol ``$\underset{I}{=}$'' means ``equality modulo $I$''.   

By the very definition of the ideal $I$ we can write
    \begin{equation*}
        \{\!\!\{a,b\}\!\!\}''c\otimes P(x,y)''\underset{I}{=}\Bigl(\{\!\!\{a,b\}\!\!\}''\otimes \Delta(P(x,y)'')'\Bigr)\Bigl(c\otimes \Delta(P(x,y)'')''\Bigr)
    \end{equation*}
    and
    \begin{equation*}
        b\{\!\!\{a,c\}\!\!\}'\otimes P(x,y)'\underset{I}{=}\Bigl(b\otimes \Delta(P(x,y)')'\Bigr)\Bigl(\{\!\!\{a,c\}\!\!\}'\otimes \Delta(P(x,y)')''\Bigr).
    \end{equation*}
Here on the left-hand sides we have just referred to the second Sweedler's component without the first one. This is an illegal operation, but the expressions from the right-hand sides will only be used to rewrite \eqref{f14} and there will be no problem with that. 

    So,
    \begin{multline}\label{f15}
        \{a\otimes x,(bc)\otimes y\}\underset{I}{=}\Bigl(\{\!\!\{a,b\}\!\!\}'\otimes P(x,y)'\Bigr)\Bigl(\{\!\!\{a,b\}\!\!\}''\otimes \Delta(P(x,y)'')'\Bigr)\Bigl(c\otimes \Delta(P(x,y)'')''\Bigr)\\
     +\Bigl(b\otimes \Delta(P(x,y)')'\Bigr)\Bigl(\{\!\!\{a,c\}\!\!\}'\otimes \Delta(P(x,y)')''\Bigr)\Bigl(\{\!\!\{a,c\}\!\!\}''\otimes P(x,y)''\Bigr).
    \end{multline}

 Next,  we rewrite the right-hand side of \eqref{f15} in a more compact form, which gives
 \begin{equation}\label{eq2.D}
        \{a\otimes x,(bc)\otimes y\}\underset{I}{=}\Bigl(\{\!\!\{a,b\}\!\!\}\otimes c\Bigr)\overset{3}{\otimes}\Delta_R(P(x,y))+\Bigl(b\otimes \{\!\!\{a,c\}\!\!\}\Bigr)\overset{3}{\otimes}\Delta_{L}(P(x,y)),
    \end{equation}
    where $\Delta_L$ (respectively, $\Delta_R$) is the coproduct $\Delta$ applied to the first (respectively, second) factor in $\operatorname{Mat}^*(d,\kk)^{\otimes 2}$.

    Let us handle the first summand on the right-hand  of \eqref{eq2.D}. By \eqref{f26} we can write 
    \begin{equation*}
        \Delta_R P(x,y)=P_{12}\Delta_R(x,y)=P_{12}\left(x\otimes \Delta(y)\right)=P(x,\Delta(y)')\otimes \Delta(y)''.
    \end{equation*}
It follows that that the first summand equals
   \begin{equation*}
       \Bigl(\{\!\!\{a,b\}\!\!\}\overset{2}{\otimes} P(x,\Delta(y)')\Bigr)(c\otimes \Delta(y)''),
   \end{equation*}
   which is 
   \begin{equation*}
       \{a\otimes x,\; b\otimes\Delta(y)'\}(c\otimes \Delta(y)'').
   \end{equation*}

Now we turn to the second summand in \eqref{eq2.D}. By \eqref{f25} we have
   \begin{equation*}
       \Delta_L P(x,y)=P_{23}(12)(x\otimes \Delta(y))=P_{23}(\Delta(y)'\otimes x\otimes \Delta(y)'')=\Delta(y)'\otimes P(x,\Delta(y)'').
   \end{equation*}
It follows that the second summand equals
   \begin{equation*}
       (b\otimes \Delta(y)')\Bigl(\{\!\!\{a,c\}\!\!\}\overset{2}{\otimes} P(x,\Delta(y)'')\Bigr),
   \end{equation*}
   which is 
   \begin{equation*}
       (b\otimes \Delta(y)')\{a\otimes x,\; c\otimes \Delta(y)''\}.
   \end{equation*}
 This completes the proof of \eqref{eq2.C}. 
\end{proof}

\section{Main theorem: the twisted version of Van den Bergh's construction}\label{sect3}

\subsection{Preliminaries}
Our goal is to prove Theorem \ref{thm3.A} which is a reformulation of Theorem \ref{thm1.A} from the introduction. First of all we shall give a precise definition of the twisted coordinate ring mentioned below Definition \ref{def1}.

Recall that $\phi$ and $\tau$ are involutive antiautomorphisms of $A$ and $\operatorname{Mat}(d,\kk)$ respectively. We denote by $\tau^*\colon\operatorname{Mat}^*(d,\kk)\rightarrow \operatorname{Mat}^*(d,\kk)$ the dual map to $\tau$.  

Recall that the element of $\OO(A,d)$ corresponding to $a\otimes x\in L(A,d)$ is denoted by $(a|x)\in \OO(A,d)$. 

\begin{definition}\label{def2}
The \emph{twisted coordinate ring} $\OO(A,d)^{\phi,\tau}$ is the quotient of $\OO(A,d)$ by the ideal generated by the elements
 \begin{equation}\label{eq3.F}
 (\phi(a)|x)-(a|\tau^*(x)), \qquad a\in A, \quad x\in\operatorname{Mat}^*(d,\kk).
 \end{equation}
\end{definition}

With an abuse of notation we will denote the element of $\OO(A,d)^{\phi,\tau}$ corresponding to $(a|x)\in\OO(A,d)$ by the same symbol $(a|x)$. We also extend the notation introduced above Proposition \ref{th2} to the case of the twisted coordinate ring.

Then the bracket from Theorem \ref{thm1.A} can be rewritten as
\begin{equation}\label{f52}
    \{(a|x),(b|y)\}_{\phi,\tau}=\Bigl(\{\!\!\{a,b\}\!\!\}\mathlarger{|}P(x,y)\Bigr)+\Bigl(\{\!\!\{\phi(a),b\}\!\!\}\mathlarger{|}P(\tau^*(x),y)\Bigr).
\end{equation}
The first summand here corresponds to the first summand in \eqref{f12} and the second summand corresponds to the double sum in \eqref{f12}.
\begin{remark}\label{rem2}
    It is clear that the right-hand side of \eqref{f52} is consistent with the relation $(\phi(a)|x)=(a|\tau^*(x))$ because if we substitute $\phi(a)$ for $a$ and $\tau^*(x)$ for $x$ then the two summands are swapped, and their sum remains unchanged.
\end{remark}

We need a twisted analog of Proposition \ref{th2}. For this purpose we define an analog of the vector space $L(A,d)=A\otimes\operatorname{Mat}^*(d,\kk)$.

\begin{definition}
We denote by $L(A,d)^{\phi,\tau}$ the quotient of the vector space $L(A,d)$ by the linear span of the elements
$$
\phi(a)\otimes x-a\otimes \tau^*(x), \qquad a\in A, \quad x\in\operatorname{Mat}^*(d,\kk).
$$    
\end{definition}

Elements of this quotient space will be denoted by the same symbols $a\otimes x$. There will be no confusion because we will work only with $L(A,d)^{\phi,\tau}$ but not with $L(A,d)$. So, in $L(A,d)^{\phi,\tau}$ one has
\begin{equation}\label{f30}
    \phi(a)\otimes x=a\otimes \tau^*(x).
\end{equation}

We will call \eqref{f30} \textit{the basic relation} in $L(A,d)^{\phi,\tau}$.

Below we will use a notation similar to that given above Proposition \ref{th2}. For instance, we set
\begin{equation*}
    c\overset{2}{\otimes} x:=(c'\otimes x')(c''\otimes x'')\in S^2\left(L(A,d)^{\phi,\tau}\right),
\end{equation*}
for any $c=c'\otimes c''\in A^{\otimes 2}$ and $x=x'\otimes x''\in\operatorname{Mat}^*(d,\kk)^{\otimes 2}$.

\begin{proposition}\label{th1}
    Let $(A,\phi)$ be an involutive $\kk$-algebra and $\{\!\!\{-,-\}\!\!\}$ be a double Poisson bracket on $A$ which is $\phi$-adapted. Let $\tau$ be an involution on the matrix algebra $\operatorname{Mat}(d,\kk)$. For each $d=1,2,\dots$, the formula
\begin{equation}\label{f29}
\left\{a\otimes x,b\otimes y\right\}_{\phi,\tau}:=\{\!\!\{a,b\}\!\!\}\overset{2}{\otimes} P(x,y)+\{\!\!\{\phi(a),b\}\!\!\}\overset{2}{\otimes} P(\tau^*(x),y),
\end{equation}
where $a,b\in A$ and $x,y\in\operatorname{Mat}^*(d,\kk)$, gives rise to a Poisson bracket on the commutative algebra $S\left(L(A,d)^{\phi,\tau}\right)$.
\end{proposition}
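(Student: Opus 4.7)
The plan mirrors the proof of Proposition \ref{th2}, with two new complications: the formula \eqref{f29} must first be shown to descend from $L(A,d)$ to the quotient $L(A,d)^{\phi,\tau}$, and the Jacobi identity involves roughly four times as many terms as in the untwisted case because each application of \eqref{f29} contributes two summands instead of one.

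The first step is to verify that the right-hand side of \eqref{f29} is invariant under the basic relation \eqref{f30} applied in each slot separately. Invariance under $(a,x)\mapsto(\phi(a),\tau^*(x))$ in the first slot is immediate because $\phi^2=\operatorname{id}$ and $(\tau^*)^2=\operatorname{id}$ merely interchange the two summands, as already noted in Remark \ref{rem2}. Invariance under the corresponding substitution in the second slot is more delicate: it rests on the $\phi$-adaptedness \eqref{f2}, which moves $\phi$ from inside $\bl-,-\br$ to outside as $\phi^{\otimes 2}$, together with the auxiliary identity $\tau^{\otimes 2}(\sigma)=\sigma$ for the swap element $\sigma$ of \eqref{eq2.B} (this holds uniformly for all involutions $\tau$, as can be verified either by direct computation in each of the three types from Example \ref{examp_3} or by characterizing $\sigma$ as the unique element implementing the factor swap by conjugation, with the overall sign pinned down by $(\tau^{\otimes 2})^2=\operatorname{id}$). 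Passing to duals, this produces an identity of the shape $(\tau^*)^{\otimes 2}\circ P=Q\circ(\tau^*)^{\otimes 2}$, where $Q(E^*_{ij}\otimes E^*_{kl}):=E^*_{il}\otimes E^*_{kj}$, and a short symmetric-algebra manipulation then yields the equality $c^\circ\overset{2}{\otimes}Q(v)=c\overset{2}{\otimes}P(v)$, completing the descent.

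Skew-symmetry of \eqref{f29} splits across its two summands: the first summands of $\{f,g\}_{\phi,\tau}$ and $-\{g,f\}_{\phi,\tau}$ match by the same manipulation as in Proposition \ref{th2} (using \eqref{f16} and commutativity of the symmetric algebra), and the second summands match by the same manipulation combined with the descent calculation just described. The bracket then extends uniquely, by bilinearity and the Leibniz rule, to a biderivation on $S(L(A,d)^{\phi,\tau})$, and it only remains to verify the Jacobi identity on triples of generators.

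The main obstacle is the Jacobi identity. For $f_k=a_k\otimes x_k$, the nested bracket $\{f_1,\{f_2,f_3\}\}_{\phi,\tau}$ expands into $2\times 2=4$ terms, so the cyclic sum produces $12$ terms; each of these splits further in two by the skew-symmetry-of-$\bl-,-\br$ manipulation appearing between \eqref{f17} and \eqref{f21}, producing $24$ summands in total. I would organize these $24$ summands into eight groups, indexed by the triples $(\epsilon_1(a),\epsilon_2(b),\epsilon_3(c))$ with $\epsilon_i\in\{\operatorname{id},\phi\}$, and then show that each group reassembles --- after moving cyclic permutations via \eqref{f20} and using $\phi$-adaptedness \eqref{f2} together with $\tau^{\otimes 2}(\sigma)=\sigma$ to pass $\phi$'s between the two sides of $\overset{3}{\otimes}$ --- into a copy of the double Jacobiator \eqref{f49} applied to that triple, paired with a suitable tensor factor in $\operatorname{Mat}^*(d,\kk)^{\otimes 3}$ built from $P_{12}P_{23}$ acting after appropriate $\tau^*$'s in the $\phi$-flipped slots. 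Each of the eight Jacobiators vanishes by \eqref{f1}, since $\bl-,-\br$ is a double Poisson bracket, and the heart of the difficulty lies entirely in the combinatorial bookkeeping of how the $24$ summands distribute among the eight groups.
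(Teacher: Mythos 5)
Your overall strategy --- descend the bracket to the quotient, check skew-symmetry, expand the cyclic Jacobi sum into $24$ summands and regroup them into vanishing Jacobiators --- is the paper's strategy, and the auxiliary identities you isolate ($\tau^{\otimes 2}(\sigma)=\sigma$ and its dual form $(\tau^*)^{\otimes 2}P=(12)P(\tau^*\otimes\tau^*)$, i.e.\ your $Q=(12)P$) are exactly the paper's Lemma \ref{prop 1}. However, there is a genuine gap in your organization of the Jacobi identity. You propose to sort the $24$ summands into eight groups indexed by the triples $(\epsilon_1(a),\epsilon_2(b),\epsilon_3(c))$ with $\epsilon_i\in\{\operatorname{id},\phi\}$, each group reassembling into the Jacobiator of that triple. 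This cannot work as stated: in the formula \eqref{f29} the involution $\phi$ is only ever applied to the \emph{first} argument, so each nested bracket $\bigl\{f_i,\{f_j,f_k\}_{\phi,\tau}\bigr\}_{\phi,\tau}$ produces a $\phi$ only on the outer variable and on the first argument of the inner bracket, never on the third variable. Consequently the cyclic sum contains $6$ terms with no $\phi$, $4$ terms for each single-$\phi$ pattern, $2$ terms for each double-$\phi$ pattern, and \emph{no} triple-$\phi$ terms. A Jacobiator \eqref{f49} is a sum of three terms and the expression $\VdB$ is a sum of six, so the single-$\phi$ groups (four members), the double-$\phi$ groups (two members) and the empty triple-$\phi$ group cannot each close up into a Jacobiator; the bookkeeping you describe does not balance.

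The missing idea is the paper's Lemma \ref{lem1}(i) together with Corollary \ref{corollary1}: using the basic relation \eqref{f30} of the quotient $L(A,d)^{\phi,\tau}$ --- which trades a $\phi$ on the $A$-side of $\overset{3}{\otimes}$ for a $\tau^*$ on the $\operatorname{Mat}^*(d,\kk)$-side --- combined with $\phi$-adaptedness, every double-$\phi$ summand is converted into a single-$\phi$ summand in which the $\phi$ has migrated to the complementary variable, the one that carried no $\phi$ before. After this redistribution each single-$\phi$ group has $4+2=6$ members and assembles into $\VdB(\phi(a),b,c;\tau^*(x),y,z)$, $\VdB(a,\phi(b),c;x,\tau^*(y),z)$ or $\VdB(a,b,\phi(c);x,y,\tau^*(z))$, so that the final identity \eqref{f28} is a sum of only \emph{four} $\VdB$-expressions (equivalently, eight Jacobiators indexed by the position of at most one $\phi$ and the orientation $bc$ versus $cb$), not eight expressions indexed by $\{\operatorname{id},\phi\}^3$. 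You gesture at the right mechanism (``pass $\phi$'s between the two sides of $\overset{3}{\otimes}$''), but you deploy it inside a grouping that the term count rules out. A further minor point: fixing the sign in $\tau^{\otimes 2}(\sigma)=\pm\sigma$ by $(\tau^{\otimes 2})^2=\operatorname{id}$ alone is insufficient, since both signs square to the identity; one needs the trace, a direct computation, or the paper's argument via $\tau(M)=gM^tg^{-1}$.
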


\begin{remark}[cf. Remark \ref{rem2.A}]
Let $\VdB(a,b,c;x,y,z)$ stand for the right-hand side of \eqref{f13}, i.e.
\begin{equation*}
    \VdB(a,b,c;x,y,z)=\{\!\!\{a,b,c\}\!\!\}\overset{3}{\otimes} P_{12}P_{23}\left(x,y,z\right)-\{\!\!\{a,c,b\}\!\!\}\overset{3}{\otimes}P_{12}P_{23}(23)\left(x,y,z\right),
\end{equation*}
where $\{\!\!\{-,-,-\}\!\!\}$ is the ternary operation defined in \eqref{f49}. 
The fact that the bracket \eqref{f29} satisfies the Jacobi identity will follow from a more general claim. Namely, let  $\{\!\!\{-,-\}\!\!\}$ be a double bracket on $A$, which is not necessarily a double Poisson bracket, and let $f_1=a\otimes x$, $f_2=b\otimes y$, and $f_3=c\otimes z$ be elements of $L(A,d)^{\phi,\tau}$. Then the following identity holds in $S\left(L(A,d)^{\phi,\tau}\right)$:
\begin{multline}\label{f28}
    \Bigl\{f_1,\{f_2,f_3\}_{\phi,\tau}\Bigr\}_{\phi,\tau}+\Bigl\{f_2,\{f_3,f_1\}_{\phi,\tau}\Bigr\}_{\phi,\tau}+\Bigl\{f_3,\{f_1,f_2\}_{\phi,\tau}\Bigr\}_{\phi,\tau}\\
    =\VdB(a,b,c;x,y,z)+\VdB(\phi(a),b,c;\tau^*(x),y,z)+\VdB(a,\phi(b),c;x,\tau^*(y),z)\\
    +\VdB(a,b,\phi(c);x,y,\tau^*(z)).
\end{multline}
\end{remark}

To prove Proposition \ref{th1} we will need several auxiliary claims.

\begin{lemma}\label{prop 1}
    Let, as before, $\tau$ be an involutive antiautomorphism of the matrix algebra $\operatorname{Mat}(d,\kk)$. For any $x,y\in\operatorname{Mat}^*(d,\kk)$ we have
    \begin{equation}\label{f33}
        (\tau^*)^{\otimes 2}P(x,y)=(12)P(\tau^*(x),\tau^*(y)).
    \end{equation}
\end{lemma}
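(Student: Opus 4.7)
My plan is to exploit the duality between $P$ and left multiplication by the element $\sigma = \sum_{i,j} E_{ij}\otimes E_{ji} \in \operatorname{Mat}(d,\kk)^{\otimes 2}$ recorded in Remark \ref{rem3}. Pairing both sides of \eqref{f33} against an arbitrary $A\otimes B \in \operatorname{Mat}(d,\kk)^{\otimes 2}$, and using that $(12)$ is self-dual while $(\tau^*)^{\otimes 2}$ is the adjoint of $\tau^{\otimes 2}$, the claim reduces to the operator identity
\[
L_\sigma\circ\tau^{\otimes 2} = \tau^{\otimes 2}\circ L_\sigma\circ (12)
\]
on $\operatorname{Mat}(d,\kk)^{\otimes 2}$. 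Evaluated at $A\otimes B$, it reads
\[
\sigma\cdot\bigl(\tau(A)\otimes \tau(B)\bigr) = \tau^{\otimes 2}\bigl(\sigma\cdot(B\otimes A)\bigr).
\]

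To verify this equality I will combine three elementary facts. First, $\sigma\cdot(X\otimes Y) = (Y\otimes X)\cdot\sigma$ for all $X,Y\in\operatorname{Mat}(d,\kk)$; under the embedding $\operatorname{Mat}(d,\kk)^{\otimes 2}\hookrightarrow \End(\kk^d\otimes\kk^d)$, $\sigma$ acts as the tensor swap, which intertwines $X\otimes Y$ with $Y\otimes X$. Second, since $\tau$ is an antiautomorphism of $\operatorname{Mat}(d,\kk)$, the map $\tau^{\otimes 2}$ is an antiautomorphism of $\operatorname{Mat}(d,\kk)^{\otimes 2}$, so $\tau^{\otimes 2}(UV) = \tau^{\otimes 2}(V)\,\tau^{\otimes 2}(U)$. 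Third --- the key structural input --- $\tau^{\otimes 2}(\sigma) = \sigma$.

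The invariance $\tau^{\otimes 2}(\sigma) = \sigma$ I would justify as follows: $\sigma$ is the canonical element attached to the nondegenerate trace form $\langle A,B\rangle := \tr(AB)$, namely $\sigma = \sum_a e_a\otimes e^a$ for any basis $\{e_a\}$ and its $\langle-,-\rangle$-dual basis $\{e^a\}$. Because every (anti)automorphism of $\operatorname{Mat}(d,\kk)$ preserves the trace, one gets $\tr(\tau(A)\tau(B)) = \tr(\tau(BA)) = \tr(BA) = \tr(AB)$, so $\tau$ preserves $\langle-,-\rangle$, and hence $\sigma$ is $\tau^{\otimes 2}$-invariant. Putting the three facts together,
\[
\tau^{\otimes 2}\bigl(\sigma\cdot(B\otimes A)\bigr) = \tau^{\otimes 2}\bigl((A\otimes B)\cdot\sigma\bigr) = \tau^{\otimes 2}(\sigma)\cdot\bigl(\tau(A)\otimes\tau(B)\bigr) = \sigma\cdot\bigl(\tau(A)\otimes\tau(B)\bigr),
\]
which matches the left-hand side.

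The main obstacle I anticipate is the bookkeeping of the duality reduction: taking adjoints reverses the order of compositions, and one must track carefully that the self-duality of $(12)$ places it on the correct side after dualization. An alternative, more pedestrian, route would be a direct evaluation on basis elements $E^*_{ij}\otimes E^*_{kl}$ using the coefficients $\tau^{mn}_{ij}$ introduced in Section \ref{sect1.5}; this too reduces in the end to the identity $\tau^{\otimes 2}(\sigma) = \sigma$ written in indices.
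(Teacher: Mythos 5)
Your proof is correct and follows essentially the same route as the paper: dualize via Remark \ref{rem3} to reduce the claim to the operator identity $L_{\sigma}\circ\tau^{\otimes 2}=\tau^{\otimes 2}\circ L_{\sigma}\circ(12)$ on $\operatorname{Mat}(d,\kk)^{\otimes 2}$, and then to the three facts that $\sigma$ implements the tensor swap, that $\tau^{\otimes 2}$ is an antiautomorphism, and that $\tau^{\otimes 2}(\sigma)=\sigma$. The only difference is in how the last invariance is justified --- the paper writes $\tau(A)=gA^tg^{-1}$ and combines $t^{\otimes 2}(\sigma)=\sigma$ with conjugation-invariance of $\sigma$, whereas you derive it more intrinsically from $\sigma$ being the canonical element of the trace form, which $\tau$ preserves; both arguments are valid.
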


\begin{proof}
    Let us rewrite \eqref{f33} in the dual terms using Remark \ref{rem3}
    \begin{equation}
        L_{\sigma}\left(\tau\otimes\tau\right)=\left(\tau\otimes\tau\right) L_{\sigma}(12).
    \end{equation}

    Let $a,b\in\operatorname{Mat}(d,\kk)$. The left-hand side evaluated at this pair equals
    \begin{equation}
        \operatorname{LHS}(a,b):=\sigma \left(\tau(a)\otimes\tau(b)\right)
    \end{equation}
    while the right-hand side gives
    \begin{equation}
        \operatorname{RHS}(a,b):=\left(\tau\otimes\tau\right)(\sigma (b\otimes a))=(\tau(b)\otimes\tau(a))\cdot\tau^{\otimes 2}(\sigma).
    \end{equation}

    Now we note that $\tau^{\otimes 2}(\sigma)=\sigma$, which is verified as follows:
    \begin{enumerate}[label=\theenumi)]
        \item The antiautomorphism $\tau$ can be written in the form $A\mapsto gA^tg^{-1}$ for an invertible matrix $g\in\operatorname{GL}(d,\kk)$, where $t$ stands for the conventional matrix transposition (indeed, this follows from the fact that any automorphism of the matrix algebra is inner).

        \item $t^{\otimes 2}(\sigma)=\sigma$.

        \item $(a_1\otimes a_2)\sigma=\sigma(a_2\otimes a_1)$ for any matrices $a_1,a_2\in\operatorname{Mat}(d,\kk)$ because $\sigma\in\operatorname{Mat}(d,\kk)^{\otimes 2}$ is the operator that swaps two tensor factors in $(\kk^d)^{\otimes 2}$. In particular, $(g\otimes g)\sigma (g^{-1}\otimes g^{-1})=\sigma$ for any invertible matrix $g\in\operatorname{GL}(d,\kk)$.
    \end{enumerate}

    So, we have 
    \begin{equation}
        \operatorname{RHS}(a,b)=(\tau(b)\otimes\tau(a))\cdot \sigma,
    \end{equation}
    which equals $\operatorname{LHS}(a,b)$ due to the third property from the list above.
\end{proof}

The following corollary is a technical tool which we will use in the proof of Proposition \ref{th1}.

\begin{corollary}\label{corollary1}
    We have
    \begin{equation*}
        (12)(\tau^*)^{\otimes 3}P_{12}P_{23}=P_{12}P_{23}(23)(\tau^*)^{\otimes 3}.
    \end{equation*}
\end{corollary}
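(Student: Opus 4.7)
The plan is to reduce the identity to a short sequence of applications of Lemma~\ref{prop 1} together with the elementary commutation relation $(23)P_{23}=P_{23}(23)$, which is the analog of item (i) of Lemma~\ref{lemma2} in the slots $(2,3)$. Nothing deeper seems to be required; the task is just to move $(\tau^*)^{\otimes3}$ past $P_{12}P_{23}$ and collect the permutations produced in the correct place.

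First I would upgrade the two-slot identity of Lemma~\ref{prop 1} to the three-slot setting by tensoring with $\tau^*$ in the inert factor. Since $P_{12}=P\otimes\operatorname{id}$ acts trivially on the third factor and the permutation $(12)$ acts trivially on the third factor as well, the identity $(\tau^*)^{\otimes2}P=(12)P(\tau^*)^{\otimes2}$ yields
\begin{equation*}
(\tau^*)^{\otimes3}P_{12}=(12)P_{12}(\tau^*)^{\otimes3},
\end{equation*}
and symmetrically, applying the same identity in the $(2,3)$ slot,
\begin{equation*}
(\tau^*)^{\otimes3}P_{23}=(23)P_{23}(\tau^*)^{\otimes3}.
\end{equation*}

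Next I would compose these two relations:
\begin{equation*}
(\tau^*)^{\otimes3}P_{12}P_{23}=(12)P_{12}(\tau^*)^{\otimes3}P_{23}=(12)P_{12}(23)P_{23}(\tau^*)^{\otimes3}.
\end{equation*}
Premultiplying by $(12)$ and using $(12)^2=\operatorname{id}$ gives
\begin{equation*}
(12)(\tau^*)^{\otimes3}P_{12}P_{23}=P_{12}(23)P_{23}(\tau^*)^{\otimes3}.
\end{equation*}

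Finally, I would invoke the commutation $(23)P_{23}=P_{23}(23)$ (which holds by the same argument as \eqref{f16} but applied in the $(2,3)$ positions) to rewrite the right-hand side as $P_{12}P_{23}(23)(\tau^*)^{\otimes3}$, which is exactly the desired identity. I do not anticipate any real obstacle: the whole argument is a direct bookkeeping exercise in the two stated commutation rules, and no further input from the double bracket or the structure of $\tau$ beyond Lemma~\ref{prop 1} is needed.
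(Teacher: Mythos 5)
Your proof is correct and follows essentially the same route as the paper: two applications of Lemma~\ref{prop 1} (one in the slots $(1,2)$ and one in the slots $(2,3)$) combined with the commutation $(23)P_{23}=P_{23}(23)$ from item (i) of Lemma~\ref{lemma2}. The only difference is presentational — you phrase the argument as operator identities, while the paper evaluates on a general element $x\otimes y\otimes z$ — so the content is identical.
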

\begin{proof}
    Let us evaluate the left-hand side on $x\otimes y\otimes z\in\operatorname{Mat}^*(d,\kk)^{\otimes 3}$.
    \begin{multline*}
        (12)(\tau^*)^{\otimes 3}P_{12}P_{23}(x,y,z)=(12)(\tau^*)^{\otimes 3}\left(P(x,P(y,z)')\otimes P(y,z)''\right)\\
        =(12)(\tau^*)^{\otimes 2} P(x,P(y,z)')\otimes \tau^* \left(P(y,z)''\right).
    \end{multline*}
    Then we apply Lemma \ref{prop 1} to the left tensor factor and obtain
    \begin{equation*}
        (12)(\tau^*)^{\otimes 3}P_{12}P_{23}(x,y,z)=P(\tau^*(x),\tau^*\left(P(y,z)'\right))\otimes \tau^* \left(P(y,z)''\right).
    \end{equation*}
     Let us rewrite the expression on the right-hand side in a more compact form
     \begin{equation*}
         (12)(\tau^*)^{\otimes 3}P_{12}P_{23}(x,y,z)=P_{12}\left((\tau^*)^{\otimes 2}\right)_{23}P_{23}\left(\tau^*(x), y,z\right).
     \end{equation*}

     Now we apply Lemma \ref{prop 1} once again:
     \begin{equation*}
         (12)(\tau^*)^{\otimes 3}P_{12}P_{23}(x,y,z)=P_{12}(23)P_{23}\left(\tau^*(x), \tau^*(y),\tau^*(z)\right).
     \end{equation*}
     Finally, item (i) of Lemma \ref{lemma2} yields the desired result.
\end{proof}

\begin{lemma}\label{lem1}
 The following identities hold in $S\left(L(A,d)^{\phi,\tau}\right)$.

{\rm(i)} For any $a,b,c\in A$ and $X\in \operatorname{Mat}^*(d,\kk)^{\otimes 3}$ we have
\begin{equation}\label{f31}
    \Bigl\{\!\!\!\Bigr\{\phi(a),\left\{\!\!\left\{\phi(b),\phi(c)\right\}\!\!\right\}\Bigr\}\!\!\!\Bigr\}_{\mathbf L}\overset{3}{\otimes} X=-\Bigl\{\!\!\!\Bigr\{a,\left\{\!\!\left\{c,b\right\}\!\!\right\}\Bigr\}\!\!\!\Bigr\}_{\mathbf L}\overset{3}{\otimes}(12)(\tau^*)^{\otimes 3}\left(X\right).
\end{equation}

{\rm(ii)} For any $a,b\in A$ and $x,y\in\operatorname{Mat}^*(d,\kk)$ we have
\begin{equation}\label{f32}
    \{\!\!\{\phi(a),b\}\!\!\}\overset{2}{\otimes} P(\tau^*(x),y)=-\{\!\!\{\phi(b),a\}\!\!\}\overset{2}{\otimes} P(\tau^*(y),x).
\end{equation}
\end{lemma}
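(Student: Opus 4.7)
The strategy for both parts is identical and rests on three transfer principles that combine to move the involutions between the $A$-side and the $\operatorname{Mat}^*(d,\kk)$-side of the ``$\overset{n}{\otimes}$'' pairing. The first principle is a consequence of $\phi$-adaptedness combined with skew-symmetry of $\bl-,-\br$: a direct check (apply $\phi^{\otimes 2}$ to both sides of \eqref{f2}, then use skew-symmetry) gives
\begin{equation*}
\bl\phi(a),\phi(b)\br = -\phi^{\otimes 2}(\bl b,a\br), \qquad a,b\in A.
\end{equation*}
The second principle is the basic relation \eqref{f30}, which, when applied in each tensor slot, yields $\phi^{\otimes n}(c)\overset{n}{\otimes}x = c\overset{n}{\otimes}(\tau^*)^{\otimes n}(x)$ in $S(L(A,d)^{\phi,\tau})$. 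The third principle is \eqref{f19} (and its $n=2$ analog, which holds by commutativity of the symmetric algebra): any permutation on the $A$-slot can be traded for its inverse acting on the $\operatorname{Mat}^*(d,\kk)$-slot.

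For part (i), write $\bl c,b\br = u\otimes v$ and $\bl a,u\br = p\otimes q$ (sums suppressed). Applying the first principle to $\bl\phi(b),\phi(c)\br$ yields $-\phi(u)\otimes\phi(v)$, and applying it again to $\bl\phi(a),\phi(u)\br$ yields $\phi(q)\otimes\phi(p)$. By the definition of $\bl-,-\br_{\Left}$, these assemble into
\begin{equation*}
\bl\phi(a),\bl\phi(b),\phi(c)\br\br_{\Left} = -\phi(q)\otimes\phi(p)\otimes\phi(v) = -(12)\,\phi^{\otimes 3}\bl a,\bl c,b\br\br_{\Left}.
\end{equation*}
Pairing with $X$ via $\overset{3}{\otimes}$, the third principle converts the outer $(12)$ on the $A$-slot into $(12)$ acting on $X$, and then the second principle converts $\phi^{\otimes 3}$ into $(\tau^*)^{\otimes 3}$ acting on $X$. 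Since $(12)$ and $(\tau^*)^{\otimes 3}$ manifestly commute (the operator $(\tau^*)^{\otimes 3}$ acts identically on each of the three factors), the right-hand side of \eqref{f31} drops out.

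For part (ii), transfer $\phi^{\otimes 2}$ from the $A$-side to $(\tau^*)^{\otimes 2}$ on the $\operatorname{Mat}^*(d,\kk)$-side via the second principle. On the $A$-side, the first principle applied in the form $\bl\phi(a),b\br = \bl\phi(a),\phi(\phi(b))\br = -\phi^{\otimes 2}(\bl\phi(b),a\br)$ produces $\phi^{\otimes 2}(\bl\phi(a),b\br) = -\bl\phi(b),a\br$. On the $\operatorname{Mat}^*$-side, Lemma \ref{prop 1} gives $(\tau^*)^{\otimes 2}P(\tau^*(x),y) = (12)P(x,\tau^*(y))$, which by item (i) of Lemma \ref{lemma2} equals $P(\tau^*(y),x)$. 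Combining these reproduces the right-hand side of \eqref{f32}. The main obstacle is the bookkeeping in part (i): one must apply the $\phi$-adapted identity twice in nested fashion and keep careful track of the resulting swaps so that the net transfer on the $A$-side collapses to the clean form $-(12)\phi^{\otimes 3}$; once this is secured, the remaining manipulations are purely formal.
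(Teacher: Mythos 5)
Your proof is correct and follows essentially the same route as the paper's: both parts rest on the basic relation \eqref{f30}, the combination of $\phi$-adaptedness with skew-symmetry, the commutativity trick \eqref{f19}, and (for part (ii)) Lemma \ref{prop 1} together with $(12)P=P(12)$ from Lemma \ref{lemma2}. The only cosmetic difference is that in part (i) you first establish the identity $\bl\phi(a),\bl\phi(b),\phi(c)\br\br_\Left=-(12)\,\phi^{\otimes3}\bl a,\bl c,b\br\br_\Left$ entirely inside $\Athree$ and then transfer everything to the $\Mat^*(d,\kk)$ side in one step, whereas the paper interleaves these transfers, applying the $\phi$-adaptedness first to the outer and then to the inner bracket; the content is identical.
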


\begin{proof}
(i) For simplicity we denote the left-hand side of \eqref{f31} by LHS. By the basic relation \eqref{f30} we have
\begin{equation*}
\operatorname{LHS}=\phi^{\otimes 3}\Bigl\{\!\!\!\Bigr\{\phi(a),\left\{\!\!\left\{\phi(b),\phi(c)\right\}\!\!\right\}\Bigr\}\!\!\!\Bigr\}_{\mathbf L}\overset{3}{\otimes} (\tau^*)^{\otimes 3}(X).
\end{equation*}

Then we expand the right-hand side:
\begin{align*}
\operatorname{LHS}=\left(\phi^{\otimes 2}\Bigl\{\!\!\!\Bigl\{\phi(a),\{\!\!\{\phi(b),\phi(c)\}\!\!\}'\Bigr\}\!\!\!\Bigr\}\overset{2}{\otimes}(\tau^*)^{\otimes 2}(X'\otimes X'')\right)\Bigl(\phi\left(\{\!\!\{\phi(b),\phi(c)\}\!\!\}''\right)\otimes \tau^*(X''')\Bigr),
\end{align*}
where $X=X'\otimes X''\otimes X'''\in\operatorname{Mat}^*(d,\kk)^{\otimes 3}$, and use the fact that the double bracket $\{\!\!\{-,-\}\!\!\}$ is $\phi$-adapted:
\begin{align*}
\operatorname{LHS}=\left(\Bigl\{\!\!\!\Bigl\{a,\phi\left(\{\!\!\{\phi(b),\phi(c)\}\!\!\}'\right)\Bigr\}\!\!\!\Bigr\}^{\circ}\overset{2}{\otimes}(\tau^*)^{\otimes 2}(X'\otimes X'')\right)\Bigl(\phi\left(\{\!\!\{\phi(b),\phi(c)\}\!\!\}''\right)\otimes \tau^*(X''')\Bigr).
\end{align*}

Next, due to commutativity of the symmetric algebra we can rewrite the first factor:
\begin{equation*}
    \operatorname{LHS}=\left(\Bigl\{\!\!\!\Bigl\{a,\phi\left(\{\!\!\{\phi(b),\phi(c)\}\!\!\}'\right)\Bigr\}\!\!\!\Bigr\}\overset{2}{\otimes}(12)(\tau^*)^{\otimes 2}(X'\otimes X'')\right)\Bigl(\phi\left(\{\!\!\{\phi(b),\phi(c)\}\!\!\}''\right)\otimes \tau^*(X''')\Bigr).
\end{equation*}

Let us rewrite the result in a more compact form:
\begin{equation*}
    \operatorname{LHS}=\Bigl\{\!\!\!\Bigr\{a,\phi^{\otimes 2}\Bigl(\left\{\!\!\left\{\phi(b),\phi(c)\right\}\!\!\right\}\Bigr)\Bigr\}\!\!\!\Bigr\}_{\mathbf L}\overset{3}{\otimes}(12)(\tau^*)^{\otimes 3} X.
\end{equation*}

Finally, we use the fact that the double bracket is $\phi$-adapted once again and obtain \eqref{f31}.

\smallskip

(ii) By the basic relation \eqref{f30} we have
\begin{equation*}
     \{\!\!\{\phi(a),b\}\!\!\}\overset{2}{\otimes} P(\tau^*(x),y)=\phi^{\otimes 2}\left(\{\!\!\{\phi(a),b\}\!\!\}\right)\overset{2}{\otimes} (\tau^*)^{\otimes 2}P(\tau^*(x),y).
\end{equation*}

Next, we use the fact that the double bracket is $\phi$-adapted and skew symmetric:
\begin{equation*}
     \{\!\!\{\phi(a),b\}\!\!\}\overset{2}{\otimes} P(\tau^*(x),y)=-\{\!\!\{\phi(b),a\}\!\!\}\overset{2}{\otimes} (\tau^*)^{\otimes 2}P(\tau^*(x),y).
\end{equation*}

Finally, we use Lemma \ref{prop 1} and the fact that $P$ commutes with the transposition (12), see Lemma \ref{lemma2}, to obtain \eqref{f32}.
\end{proof}

\subsection{Proof of Proposition \ref{th1}}

Recall that by Remark \ref{rem2}, the bracket $\{-,-\}_{\phi,\tau}$ is well defined. Note that \eqref{f29} is skew symmetric due to item (ii) of  Lemma \ref{lem1}. Now we proceed to the proof of  \eqref{f28}.

By the definition \eqref{f29} we have
\begin{align*}
    \left\{f_2,f_3\right\}_{\phi,\tau}&=\{\!\!\{b,c\}\!\!\}\overset{2}{\otimes} P(y,z)+\{\!\!\{\phi(b),c\}\!\!\}\overset{2}{\otimes} P(\tau^*(y),z)\\
    &=\Bigl(\{\!\!\{b,c\}\!\!\}'\otimes P(y,z)'\Bigr)\Bigl(\{\!\!\{b,c\}\!\!\}''\otimes P(y,z)''\Bigr)\\
    &\phantom{aaaaaa}+\Bigl(\{\!\!\{\phi(b),c\}\!\!\}'\otimes P(\tau^*(y),z)'\Bigr)\Bigl(\{\!\!\{\phi(b),c\}\!\!\}''\otimes P(\tau^*(y),z)''\Bigr).
\end{align*}

Then we apply the Leibniz rule to each of the two resulting summands:
\begin{align*}
    \Bigl\{f_1,\{\Bigr. & \Bigl. f_2,f_3\}_{\phi,\tau}\Bigr\}_{\phi,\tau}\\
    &=\Bigl\{f_1,\{\!\!\{b,c\}\!\!\}'\otimes P(y,z)'\Bigr\}_{\phi,\tau}\Bigl(\{\!\!\{b,c\}\!\!\}''\otimes P(y,z)''\Bigr)\\
    &\phantom{aaaa}+\Bigl(\{\!\!\{b,c\}\!\!\}'\otimes P(y,z)'\Bigr)\Bigl\{f_1,\{\!\!\{b,c\}\!\!\}''\otimes P(y,z)''\Bigr\}_{\phi,\tau}\\
    &\phantom{aaaa}+\Bigl\{f_1,\{\!\!\{\phi(b),c\}\!\!\}'\otimes P(\tau^*(y),z)'\Bigr\}_{\phi,\tau}\Bigl(\{\!\!\{\phi(b),c\}\!\!\}''\otimes P(\tau^*(y),z)''\Bigr)\\
    &\phantom{aaaa}+\Bigl(\{\!\!\{\phi(b),c\}\!\!\}'\otimes P(\tau^*(y),z)'\Bigr)\Bigl\{f_1,\{\!\!\{\phi(b),c\}\!\!\}''\otimes P(\tau^*(y),z)''\Bigr\}_{\phi,\tau}.\\
\end{align*}
Expanding the bracket $\{-,-\}_{\phi,\tau}$ we obtain eight summands:
\begin{align*}
    \Bigl\{f_1\Bigr. & \Bigl.,\{ f_2,f_3\}_{\phi,\tau}\Bigr\}_{\phi,\tau}\\
    &=\left(\Bigl\{\!\!\!\Bigl\{a,\{\!\!\{b,c\}\!\!\}'\Bigr\}\!\!\!\Bigr\}\overset{2}{\otimes} P(x,P(y,z)')\right)\Bigl(\{\!\!\{b,c\}\!\!\}''\otimes P(y,z)''\Bigr)\\
    &\phantom{aaa}+\left(\Bigl\{\!\!\!\Bigl\{\phi(a),\{\!\!\{b,c\}\!\!\}'\Bigr\}\!\!\!\Bigr\}\overset{2}{\otimes} P(\tau^*(x),P(y,z)')\right)\Bigl(\{\!\!\{b,c\}\!\!\}''\otimes P(y,z)''\Bigr)\\
    &\phantom{aaa}+\Bigl(\{\!\!\{b,c\}\!\!\}'\otimes P(y,z)'\Bigr)\left(\Bigl\{\!\!\!\Bigl\{a,\{\!\!\{b,c\}\!\!\}''\Bigr\}\!\!\!\Bigr\}\overset{2}{\otimes} P(x,P(y,z)'')\right)\\
    &\phantom{aaa}+\Bigl(\{\!\!\{b,c\}\!\!\}'\otimes P(y,z)'\Bigr)\left(\Bigl\{\!\!\!\Bigl\{\phi(a),\{\!\!\{b,c\}\!\!\}''\Bigr\}\!\!\!\Bigr\}\overset{2}{\otimes} P(\tau^*(x),P(y,z)'')\right)\\
    &\phantom{aaa}+\left(\Bigl\{\!\!\!\Bigl\{a,\{\!\!\{\phi(b),c\}\!\!\}'\Bigr\}\!\!\!\Bigr\}\overset{2}{\otimes} P(x,P(\tau^*(y),z)')\right)\Bigl(\{\!\!\{\phi(b),c\}\!\!\}''\otimes P(\tau^*(y),z)''\Bigr)\\
    &\phantom{aaa}+\left(\Bigl\{\!\!\!\Bigl\{\phi(a),\{\!\!\{\phi(b),c\}\!\!\}'\Bigr\}\!\!\!\Bigr\}\overset{2}{\otimes} P(\tau^*(x),P(\tau^*(y),z)')\right)\Bigl(\{\!\!\{\phi(b),c\}\!\!\}''\otimes P(\tau^*(y),z)''\Bigr)\\
    &\phantom{aaa}+\Bigl(\{\!\!\{\phi(b),c\}\!\!\}'\otimes P(\tau^*(y),z)'\Bigr)\left(\Bigl\{\!\!\!\Bigl\{a,\{\!\!\{\phi(b),c\}\!\!\}''\Bigr\}\!\!\!\Bigr\}\overset{2}{\otimes} P(x,P(\tau^*(y),z)'')\right)\\
    &\phantom{aaa}+\Bigl(\{\!\!\{\phi(b),c\}\!\!\}'\otimes P(\tau^*(y),z)'\Bigr)\left(\Bigl\{\!\!\!\Bigl\{\phi(a),\{\!\!\{\phi(b),c\}\!\!\}''\Bigr\}\!\!\!\Bigr\}\overset{2}{\otimes} P(\tau^*(x),P(\tau^*(y),z)'')\right).
\end{align*}

Now we will perform a series of manipulations with these summands in order to rewrite them in a standard form, but we shall notice that the order of these summands in all the formulas below will remain unchanged.

Let us rewrite the resulting expression in a more compact form
\begin{align}\label{f37}
 \notag
    \Bigl\{f_1,\{f_2,f_3\}_{\phi,\tau}\Bigr\}_{\phi,\tau} &=\Bigl\{\!\!\!\Bigr\{a,\left\{\!\!\left\{b,c\right\}\!\!\right\}\Bigr\}\!\!\!\Bigr\}_{\mathbf L}\overset{3}{\otimes} P_{12}P_{23}(x,y,z)\\ \notag
    &\phantom{aaaa}+\Bigl\{\!\!\!\Bigr\{\phi(a),\left\{\!\!\left\{b,c\right\}\!\!\right\}\Bigr\}\!\!\!\Bigr\}_{\mathbf L}\overset{3}{\otimes} P_{12}P_{23}(\tau^*(x),y,z) \\ \notag
    &\phantom{aaaa}+\Bigl\{\!\!\!\Bigr\{a,\left\{\!\!\left\{b,c\right\}\!\!\right\}\Bigr\}\!\!\!\Bigr\}_{\mathbf R}\overset{3}{\otimes} P_{23}(12)P_{23}(x,y,z)\\ \notag
    &\phantom{aaaa}+\Bigl\{\!\!\!\Bigr\{\phi(a),\left\{\!\!\left\{b,c\right\}\!\!\right\}\Bigr\}\!\!\!\Bigr\}_{\mathbf R}\overset{3}{\otimes} P_{23}(12)P_{23}(\tau^*(x),y,z)\\ 
    &\phantom{aaaa}+\Bigl\{\!\!\!\Bigr\{a,\left\{\!\!\left\{\phi(b),c\right\}\!\!\right\}\Bigr\}\!\!\!\Bigr\}_{\mathbf L}\overset{3}{\otimes} P_{12}P_{23}(x,\tau^*(y),z)\\  \notag
    &\phantom{aaaa}+\Bigl\{\!\!\!\Bigr\{\phi(a),\left\{\!\!\left\{\phi(b),c\right\}\!\!\right\}\Bigr\}\!\!\!\Bigr\}_{\mathbf L}\overset{3}{\otimes} P_{12}P_{23}(\tau^*(x),\tau^*(y),z)\\ \notag
    &\phantom{aaaa}+\Bigl\{\!\!\!\Bigr\{a,\left\{\!\!\left\{\phi(b),c\right\}\!\!\right\}\Bigr\}\!\!\!\Bigr\}_{\mathbf R}\overset{3}{\otimes} P_{23}(12)P_{23}(x,\tau^*(y),z)\\ \notag
    &\phantom{aaaa}+\Bigl\{\!\!\!\Bigr\{\phi(a),\left\{\!\!\left\{\phi(b),c\right\}\!\!\right\}\Bigr\}\!\!\!\Bigr\}_{\mathbf R}\overset{3}{\otimes} P_{23}(12)P_{23}(\tau^*(x),\tau^*(y),z). \notag
\end{align}

Recall that $\{\!\!\{a,-\}\!\!\}_\Left$ is the action of $\{\!\!\{a,-\}\!\!\}$ on the left tensor factor of $A\otimes A$, i.e. $\{\!\!\{a,x\}\!\!\}_\Left=\{\!\!\{a,x'\}\!\!\}\otimes x''$ for any $x=x'\otimes x''\in \Atwo$. The expression $\{\!\!\{a,-\}\!\!\}_\Right$ is defined similarly, $\{\!\!\{a,x\}\!\!\}_\Right=x'\otimes \{\!\!\{a,x''\}\!\!\}$.

Next, we would like to bring all the terms to a standard form by which we mean that the result should contain only elements of the form $\Bigl\{\!\!\!\Bigr\{-,\left\{\!\!\left\{-,-\right\}\!\!\right\}\Bigr\}\!\!\!\Bigr\}_\Left$ but not $\Bigl\{\!\!\!\Bigr\{-,\left\{\!\!\left\{-,-\right\}\!\!\right\}\Bigr\}\!\!\!\Bigr\}_\Right$. Moreover, we wouldn't like to have two $\phi$'s inside brackets like we have now in the sixth and eighth summands. Let us perform the necessary computations step by step.

We replace $\mathbf{L}$ with $\mathbf{R}$ as follows. Firstly we use the skew-commutativity of the double bracket  
\begin{equation*}
    \Bigl\{\!\!\!\Bigr\{a,\left\{\!\!\left\{b,c\right\}\!\!\right\}\Bigr\}\!\!\!\Bigr\}_{\mathbf R}=-\Bigl\{\!\!\!\Bigr\{a,\left\{\!\!\left\{c,b\right\}\!\!\right\}^{\circ}\Bigr\}\!\!\!\Bigr\}_{\mathbf R}=-(123)\Bigl\{\!\!\!\Bigr\{a,\left\{\!\!\left\{c,b\right\}\!\!\right\}\Bigr\}\!\!\!\Bigr\}_{\mathbf L}.
\end{equation*}
Secondly we use commutativity of the symmetric algebra
\begin{equation}\label{f35}
    \Bigl\{\!\!\!\Bigr\{a,\left\{\!\!\left\{b,c\right\}\!\!\right\}\Bigr\}\!\!\!\Bigr\}_{\mathbf R}\overset{3}{\otimes} P_{23}(12)P_{23}(x,y,z)=-\Bigl\{\!\!\!\Bigr\{a,\left\{\!\!\left\{c,b\right\}\!\!\right\}\Bigr\}\!\!\!\Bigr\}_{\mathbf L}\overset{3}{\otimes} (123)^2P_{23}(12)P_{23}(x,y,z).
\end{equation}

Combining the following simple identities  
\begin{gather*}
    (123)^2P_{23}=P_{12}(123)^2\\
    (123)^2(12)=(23)
\end{gather*}
with the first part of Lemma \ref{lemma2} and \eqref{f35} we arrive at
\begin{equation}\label{f36}
    \Bigl\{\!\!\!\Bigr\{a,\left\{\!\!\left\{b,c\right\}\!\!\right\}\Bigr\}\!\!\!\Bigr\}_{\mathbf R}\overset{3}{\otimes} P_{23}(12)P_{23}(x,y,z)=-\Bigl\{\!\!\!\Bigr\{a,\left\{\!\!\left\{c,b\right\}\!\!\right\}\Bigr\}\!\!\!\Bigr\}_{\mathbf L}\overset{3}{\otimes} P_{12}P_{23}(23)(x,y,z).
\end{equation}

Now we can apply \eqref{f36} to various inputs to obtain desired expressions for the fourth, seventh, and eighth summands of \eqref{f37}:
\begin{multline}\label{f39}
    \Bigl\{\!\!\!\Bigr\{\phi(a),\left\{\!\!\left\{b,c\right\}\!\!\right\}\Bigr\}\!\!\!\Bigr\}_{\mathbf R}\overset{3}{\otimes} P_{23}(12)P_{23}(\tau^*(x),y,z)\\
    =-\Bigl\{\!\!\!\Bigr\{\phi(a),\left\{\!\!\left\{c,b\right\}\!\!\right\}\Bigr\}\!\!\!\Bigr\}_{\mathbf L}\overset{3}{\otimes} P_{12}P_{23}(23)(\tau^*(x),y,z)
\end{multline}
\begin{multline}\label{f40}
    \Bigl\{\!\!\!\Bigr\{a,\left\{\!\!\left\{\phi(b),c\right\}\!\!\right\}\Bigr\}\!\!\!\Bigr\}_{\mathbf R}\overset{3}{\otimes} P_{23}(12)P_{23}(x,\tau^*(y),z)\\
    =-\Bigl\{\!\!\!\Bigr\{a,\left\{\!\!\left\{c,\phi(b)\right\}\!\!\right\}\Bigr\}\!\!\!\Bigr\}_{\mathbf L}\overset{3}{\otimes} P_{12}P_{23}(23)(x,\tau^*(y),z)
\end{multline}
\begin{multline}\label{f38}
    \Bigl\{\!\!\!\Bigr\{\phi(a),\left\{\!\!\left\{\phi(b),c\right\}\!\!\right\}\Bigr\}\!\!\!\Bigr\}_{\mathbf R}\overset{3}{\otimes} P_{23}(12)P_{23}(\tau^*(x),\tau^*(y),z)\\
    =-\Bigl\{\!\!\!\Bigr\{\phi(a),\left\{\!\!\left\{c,\phi(b)\right\}\!\!\right\}\Bigr\}\!\!\!\Bigr\}_{\mathbf L}\overset{3}{\otimes} P_{12}P_{23}(23)(\tau^*(x),\tau^*(y),z). 
\end{multline}
   
Now we would like to rewrite the summands containing two $\phi$'s in terms of similar expressions with only one $\phi$ with the help of Lemma \ref{lem1}. We do this for the sixth term of \eqref{f37} 
\begin{multline*}
    \Bigl\{\!\!\!\Bigr\{\phi(a),\left\{\!\!\left\{\phi(b),c\right\}\!\!\right\}\Bigr\}\!\!\!\Bigr\}_{\mathbf L}\overset{3}{\otimes} P_{12}P_{23}(\tau^*(x),\tau^*(y),z)\\
    =-\Bigl\{\!\!\!\Bigr\{a,\left\{\!\!\left\{\phi(c),b\right\}\!\!\right\}\Bigr\}\!\!\!\Bigr\}_{\mathbf L}\overset{3}{\otimes} (12)(\tau^*)^{\otimes 3}P_{12}P_{23}(\tau^*(x),\tau^*(y),z)
\end{multline*}
and for the right-hand side of \eqref{f38}
\begin{multline*}
   \Bigl\{\!\!\!\Bigr\{\phi(a),\left\{\!\!\left\{\phi(b),c\right\}\!\!\right\}\Bigr\}\!\!\!\Bigr\}_{\mathbf R}\overset{3}{\otimes} P_{23}(12)P_{23}(\tau^*(x),\tau^*(y),z)\\
    =\Bigl\{\!\!\!\Bigr\{a,\left\{\!\!\left\{b,\phi(c)\right\}\!\!\right\}\Bigr\}\!\!\!\Bigr\}_{\mathbf L}\overset{3}{\otimes} (12)(\tau^*)^{\otimes 3}P_{12}P_{23}(23)(\tau^*(x),\tau^*(y),z)
\end{multline*}

Then we apply Corollary \ref{corollary1} to the right-hand sides of the above identities to obtain 
\begin{multline}\label{f41}
    \Bigl\{\!\!\!\Bigr\{\phi(a),\left\{\!\!\left\{\phi(b),c\right\}\!\!\right\}\Bigr\}\!\!\!\Bigr\}_{\mathbf L}\overset{3}{\otimes} P_{12}P_{23}(\tau^*(x),\tau^*(y),z)\\
    =-\Bigl\{\!\!\!\Bigr\{a,\left\{\!\!\left\{\phi(c),b\right\}\!\!\right\}\Bigr\}\!\!\!\Bigr\}_{\mathbf L}\overset{3}{\otimes} P_{12}P_{23}(23)(x,y,\tau^*(z))
\end{multline}
and
\begin{multline}\label{f42}
    \Bigl\{\!\!\!\Bigr\{\phi(a),\left\{\!\!\left\{\phi(b),c\right\}\!\!\right\}\Bigr\}\!\!\!\Bigr\}_{\mathbf R}\overset{3}{\otimes} P_{23}(12)P_{23}(\tau^*(x),\tau^*(y),z)\\
    =\Bigl\{\!\!\!\Bigr\{a,\left\{\!\!\left\{b,\phi(c)\right\}\!\!\right\}\Bigr\}\!\!\!\Bigr\}_{\mathbf L}\overset{3}{\otimes} P_{12}P_{23}(x,y,\tau^*(z)).
\end{multline}

Finally, we can combine \eqref{f37} with \eqref{f36}, \eqref{f39}, \eqref{f40}, \eqref{f41}, and \eqref{f42}
\begin{align}\label{f43}
 \notag
 \Bigl\{f_1,\{f_2,f_3\}_{\phi,\tau}\Bigr\}_{\phi,\tau}&=\Bigl\{\!\!\!\Bigr\{a,\left\{\!\!\left\{b,c\right\}\!\!\right\}\Bigr\}\!\!\!\Bigr\}_{\mathbf L}\overset{3}{\otimes} P_{12}P_{23}(x,y,z)\\ \notag
    &\phantom{aaaaa}+\Bigl\{\!\!\!\Bigr\{\phi(a),\left\{\!\!\left\{b,c\right\}\!\!\right\}\Bigr\}\!\!\!\Bigr\}_{\mathbf L}\overset{3}{\otimes} P_{12}P_{23}(\tau^*(x),y,z)\\ \notag
    &\phantom{aaaaa}-\Bigl\{\!\!\!\Bigr\{a,\left\{\!\!\left\{c,b\right\}\!\!\right\}\Bigr\}\!\!\!\Bigr\}_{\mathbf L}\overset{3}{\otimes} P_{12}P_{23}(23)(x,y,z)\\ \notag
    &\phantom{aaaaa}-\Bigl\{\!\!\!\Bigr\{\phi(a),\left\{\!\!\left\{c,b\right\}\!\!\right\}\Bigr\}\!\!\!\Bigr\}_{\mathbf L}\overset{3}{\otimes} P_{12}P_{23}(23)(\tau^*(x),y,z)\\ 
    &\phantom{aaaaa}+\Bigl\{\!\!\!\Bigr\{a,\left\{\!\!\left\{\phi(b),c\right\}\!\!\right\}\Bigr\}\!\!\!\Bigr\}_{\mathbf L}\overset{3}{\otimes} P_{12}P_{23}(x,\tau^*(y),z)\\ \notag
    &\phantom{aaaaa}-\Bigl\{\!\!\!\Bigr\{a,\left\{\!\!\left\{\phi(c),b\right\}\!\!\right\}\Bigr\}\!\!\!\Bigr\}_{\mathbf L}\overset{3}{\otimes} P_{12}P_{23}(23)(x,y,\tau^*(z))\\ \notag
    &\phantom{aaaaa}-\Bigl\{\!\!\!\Bigr\{a,\left\{\!\!\left\{c,\phi(b)\right\}\!\!\right\}\Bigr\}\!\!\!\Bigr\}_{\mathbf L}\overset{3}{\otimes} P_{12}P_{23}(23)(x,\tau^*(y),z)\\ \notag
    &\phantom{aaaaa}+\Bigl\{\!\!\!\Bigr\{a,\left\{\!\!\left\{b,\phi(c)\right\}\!\!\right\}\Bigr\}\!\!\!\Bigr\}_{\mathbf L}\overset{3}{\otimes} P_{12}P_{23}(x,y,\tau^*(z)). \notag
\end{align}

Now we can apply \eqref{f43} to various inputs in order to obtain similar expressions for $\Bigl\{f_2,\{f_3,f_1\}_{\phi,\tau}\Bigr\}_{\phi,\tau}$ and $\Bigl\{f_3,\{f_1,f_2\}_{\phi,\tau}\Bigr\}_{\phi,\tau}$. Notice the permutations $(123)$ and $(123)^2$ inside the right tensor factor in formulas below:
\begin{align*}
    \Bigl\{f_2,\{f_3,f_1\}_{\phi,\tau}\Bigr\}_{\phi,\tau}&=\Bigl\{\!\!\!\Bigr\{b,\left\{\!\!\left\{c,a\right\}\!\!\right\}\Bigr\}\!\!\!\Bigr\}_{\mathbf L}\overset{3}{\otimes} P_{12}P_{23}(123)^2(x,y,z)\\
    &\phantom{aaaaa}+\Bigl\{\!\!\!\Bigr\{\phi(b),\left\{\!\!\left\{c,a\right\}\!\!\right\}\Bigr\}\!\!\!\Bigr\}_{\mathbf L}\overset{3}{\otimes} P_{12}P_{23}(123)^2(x,\tau^*(y),z) \\
    &\phantom{aaaaa}-\Bigl\{\!\!\!\Bigr\{b,\left\{\!\!\left\{a,c\right\}\!\!\right\}\Bigr\}\!\!\!\Bigr\}_{\mathbf L}\overset{3}{\otimes} P_{12}P_{23}(23)(123)^2(x,y,z)\\
    &\phantom{aaaaa}-\Bigl\{\!\!\!\Bigr\{\phi(b),\left\{\!\!\left\{a,c\right\}\!\!\right\}\Bigr\}\!\!\!\Bigr\}_{\mathbf L}\overset{3}{\otimes} P_{12}P_{23}(23)(123)^2(x,\tau^*(y),z)\\
    &\phantom{aaaaa}+\Bigl\{\!\!\!\Bigr\{b,\left\{\!\!\left\{\phi(c),a\right\}\!\!\right\}\Bigr\}\!\!\!\Bigr\}_{\mathbf L}\overset{3}{\otimes} P_{12}P_{23}(123)^2(x,y,\tau^*(z))\\
    &\phantom{aaaaa}-\Bigl\{\!\!\!\Bigr\{b,\left\{\!\!\left\{\phi(a),c\right\}\!\!\right\}\Bigr\}\!\!\!\Bigr\}_{\mathbf L}\overset{3}{\otimes} P_{12}P_{23}(23)(123)^2(\tau^*(x),y,z)\\
    &\phantom{aaaaa}-\Bigl\{\!\!\!\Bigr\{b,\left\{\!\!\left\{a,\phi(c)\right\}\!\!\right\}\Bigr\}\!\!\!\Bigr\}_{\mathbf L}\overset{3}{\otimes} P_{12}P_{23}(23)(123)^2(x,y,\tau^*(z))\\
    &\phantom{aaaaa}+\Bigl\{\!\!\!\Bigr\{b,\left\{\!\!\left\{c,\phi(a)\right\}\!\!\right\}\Bigr\}\!\!\!\Bigr\}_{\mathbf L}\overset{3}{\otimes} P_{12}P_{23}(123)^2(\tau^*(x),y,z)
\end{align*}
and
\begin{align*}
    \Bigl\{f_3,\{f_1,f_2\}_{\phi,\tau}\Bigr\}_{\phi,\tau}&=\Bigl\{\!\!\!\Bigr\{c,\left\{\!\!\left\{a,b\right\}\!\!\right\}\Bigr\}\!\!\!\Bigr\}_{\mathbf L}\overset{3}{\otimes} P_{12}P_{23}(123)(x,y,z)\\
    &\phantom{aaaaa}+\Bigl\{\!\!\!\Bigr\{\phi(c),\left\{\!\!\left\{a,b\right\}\!\!\right\}\Bigr\}\!\!\!\Bigr\}_{\mathbf L}\overset{3}{\otimes} P_{12}P_{23}(123)(x,y,\tau^*(z)) \\
    &\phantom{aaaaa}-\Bigl\{\!\!\!\Bigr\{c,\left\{\!\!\left\{b,a\right\}\!\!\right\}\Bigr\}\!\!\!\Bigr\}_{\mathbf L}\overset{3}{\otimes} P_{12}P_{23}(23)(123)(x,y,z)\\
    &\phantom{aaaaa}-\Bigl\{\!\!\!\Bigr\{\phi(c),\left\{\!\!\left\{b,a\right\}\!\!\right\}\Bigr\}\!\!\!\Bigr\}_{\mathbf L}\overset{3}{\otimes} P_{12}P_{23}(23)(123)(x,y,\tau^*(z))\\
    &\phantom{aaaaa}+\Bigl\{\!\!\!\Bigr\{c,\left\{\!\!\left\{\phi(a),b\right\}\!\!\right\}\Bigr\}\!\!\!\Bigr\}_{\mathbf L}\overset{3}{\otimes} P_{12}P_{23}(123)(\tau^*(x),y,z)\\
    &\phantom{aaaaa}-\Bigl\{\!\!\!\Bigr\{c,\left\{\!\!\left\{\phi(b),a\right\}\!\!\right\}\Bigr\}\!\!\!\Bigr\}_{\mathbf L}\overset{3}{\otimes} P_{12}P_{23}(23)(123)(x,\tau^*(y),z)\\
    &\phantom{aaaaa}-\Bigl\{\!\!\!\Bigr\{c,\left\{\!\!\left\{b,\phi(a)\right\}\!\!\right\}\Bigr\}\!\!\!\Bigr\}_{\mathbf L}\overset{3}{\otimes} P_{12}P_{23}(23)(123)(\tau^*(x),y,z)\\
    &\phantom{aaaaa}+\Bigl\{\!\!\!\Bigr\{c,\left\{\!\!\left\{a,\phi(b)\right\}\!\!\right\}\Bigr\}\!\!\!\Bigr\}_{\mathbf L}\overset{3}{\otimes} P_{12}P_{23}(123)(x,\tau^*(y),z).
\end{align*}

Next, we would like to move the permutations $(123)$ and $(123)^2$ to the first tensor factor. Let's do this in two steps. Firstly, we use the second part of Lemma \ref{lemma2} and the following simple identity if needed
\begin{equation*}
    (23)(123)=(123)^2(23)
\end{equation*}
 to move the permutations to the leftmost position of the second tensor factor. Secondly, we use commutativity of the symmetric algebra to move the permutations to the first tensor factor. The results are as follows
\begin{align}\label{f46}
 \notag
    \Bigl\{f_2,\{f_3,f_1\}_{\phi,\tau}\Bigr\}_{\phi,\tau}&=(123)\Bigl\{\!\!\!\Bigr\{b,\left\{\!\!\left\{c,a\right\}\!\!\right\}\Bigr\}\!\!\!\Bigr\}_{\mathbf L}\overset{3}{\otimes} P_{12}P_{23}(x,y,z)\\ \notag
    &\phantom{aaaaa}+(123)\Bigl\{\!\!\!\Bigr\{\phi(b),\left\{\!\!\left\{c,a\right\}\!\!\right\}\Bigr\}\!\!\!\Bigr\}_{\mathbf L}\overset{3}{\otimes} P_{12}P_{23}(x,\tau^*(y),z) \\ \notag
    &\phantom{aaaaa}-(123)^2\Bigl\{\!\!\!\Bigr\{b,\left\{\!\!\left\{a,c\right\}\!\!\right\}\Bigr\}\!\!\!\Bigr\}_{\mathbf L}\overset{3}{\otimes} P_{12}P_{23}(23)(x,y,z)\\ \notag
    &\phantom{aaaaa}-(123)^2\Bigl\{\!\!\!\Bigr\{\phi(b),\left\{\!\!\left\{a,c\right\}\!\!\right\}\Bigr\}\!\!\!\Bigr\}_{\mathbf L}\overset{3}{\otimes} P_{12}P_{23}(23)(x,\tau^*(y),z)\\
    &\phantom{aaaaa}+(123)\Bigl\{\!\!\!\Bigr\{b,\left\{\!\!\left\{\phi(c),a\right\}\!\!\right\}\Bigr\}\!\!\!\Bigr\}_{\mathbf L}\overset{3}{\otimes} P_{12}P_{23}(x,y,\tau^*(z))\\ \notag
    &\phantom{aaaaa}-(123)^2\Bigl\{\!\!\!\Bigr\{b,\left\{\!\!\left\{\phi(a),c\right\}\!\!\right\}\Bigr\}\!\!\!\Bigr\}_{\mathbf L}\overset{3}{\otimes} P_{12}P_{23}(23)(\tau^*(x),y,z)\\ \notag
    &\phantom{aaaaa}-(123)^2\Bigl\{\!\!\!\Bigr\{b,\left\{\!\!\left\{a,\phi(c)\right\}\!\!\right\}\Bigr\}\!\!\!\Bigr\}_{\mathbf L}\overset{3}{\otimes} P_{12}P_{23}(23)(x,y,\tau^*(z))\\ \notag
    &\phantom{aaaaa}+(123)\Bigl\{\!\!\!\Bigr\{b,\left\{\!\!\left\{c,\phi(a)\right\}\!\!\right\}\Bigr\}\!\!\!\Bigr\}_{\mathbf L}\overset{3}{\otimes} P_{12}P_{23}(\tau^*(x),y,z) \notag
\end{align}
and
\begin{align}\label{f47}
 \notag
    \Bigl\{f_3,\{f_1,f_2\}_{\phi,\tau}\Bigr\}_{\phi,\tau}&=(123)^2\Bigl\{\!\!\!\Bigr\{c,\left\{\!\!\left\{a,b\right\}\!\!\right\}\Bigr\}\!\!\!\Bigr\}_{\mathbf L}\overset{3}{\otimes} P_{12}P_{23}(x,y,z)\\ \notag
    &\phantom{aaaaa}+(123)^2\Bigl\{\!\!\!\Bigr\{\phi(c),\left\{\!\!\left\{a,b\right\}\!\!\right\}\Bigr\}\!\!\!\Bigr\}_{\mathbf L}\overset{3}{\otimes} P_{12}P_{23}(x,y,\tau^*(z)) \\ \notag
    &\phantom{aaaaa}-(123)\Bigl\{\!\!\!\Bigr\{c,\left\{\!\!\left\{b,a\right\}\!\!\right\}\Bigr\}\!\!\!\Bigr\}_{\mathbf L}\overset{3}{\otimes} P_{12}P_{23}(23)(x,y,z)\\ \notag
    &\phantom{aaaaa}-(123)\Bigl\{\!\!\!\Bigr\{\phi(c),\left\{\!\!\left\{b,a\right\}\!\!\right\}\Bigr\}\!\!\!\Bigr\}_{\mathbf L}\overset{3}{\otimes} P_{12}P_{23}(23)(x,y,\tau^*(z))\\
    &\phantom{aaaaa}+(123)^2\Bigl\{\!\!\!\Bigr\{c,\left\{\!\!\left\{\phi(a),b\right\}\!\!\right\}\Bigr\}\!\!\!\Bigr\}_{\mathbf L}\overset{3}{\otimes} P_{12}P_{23}(\tau^*(x),y,z)\\ \notag
    &\phantom{aaaaa}-(123)\Bigl\{\!\!\!\Bigr\{c,\left\{\!\!\left\{\phi(b),a\right\}\!\!\right\}\Bigr\}\!\!\!\Bigr\}_{\mathbf L}\overset{3}{\otimes} P_{12}P_{23}(23)(x,\tau^*(y),z)\\ \notag
    &\phantom{aaaaa}-(123)\Bigl\{\!\!\!\Bigr\{c,\left\{\!\!\left\{b,\phi(a)\right\}\!\!\right\}\Bigr\}\!\!\!\Bigr\}_{\mathbf L}\overset{3}{\otimes} P_{12}P_{23}(23)(\tau^*(x),y,z)\\ \notag
    &\phantom{aaaaa}+(123)^2\Bigl\{\!\!\!\Bigr\{c,\left\{\!\!\left\{a,\phi(b)\right\}\!\!\right\}\Bigr\}\!\!\!\Bigr\}_{\mathbf L}\overset{3}{\otimes} P_{12}P_{23}(x,\tau^*(y),z). \notag
\end{align}

Finally, we can sum \eqref{f43}, \eqref{f46}, and \eqref{f47} to obtain the desired expression for the left-hand side of \eqref{f28}
\begin{align}\label{f48}
 \notag
    &\Bigl\{f_1,\{f_2,f_3\}_{\phi,\tau}\Bigr\}_{\phi,\tau}+\Bigl\{f_2,\{f_3,f_1\}_{\phi,\tau}\Bigr\}_{\phi,\tau}+\Bigl\{f_3,\{f_1,f_2\}_{\phi,\tau}\Bigr\}_{\phi,\tau}\\ \notag
    &\phantom{aaaaaaaaaaai}=\{\!\!\{a,b,c\}\!\!\}\overset{3}{\otimes} P_{12}P_{23}\left(x,y,z\right)-\{\!\!\{a,c,b\}\!\!\}\overset{3}{\otimes}P_{12}P_{23}(23)\left(x,y,z\right)\\ \notag
    &+\{\!\!\{\phi(a),b,c\}\!\!\}\overset{3}{\otimes} P_{12}P_{23}\left(\tau^*(x),y,z\right)-\{\!\!\{\phi(a),c,b\}\!\!\}\overset{3}{\otimes}P_{12}P_{23}(23)\left(\tau^*(x),y,z\right)\\
    &+\{\!\!\{a,\phi(b),c\}\!\!\}\overset{3}{\otimes} P_{12}P_{23}\left(x,\tau^*(y),z\right)-\{\!\!\{a,c,\phi(b)\}\!\!\}\overset{3}{\otimes}P_{12}P_{23}(23)\left(x,\tau^*(y),z\right)\\ \notag
    &+\{\!\!\{a,b,\phi(c)\}\!\!\}\overset{3}{\otimes} P_{12}P_{23}\left(x,y,\tau^*(z)\right)-\{\!\!\{a,\phi(c),b\}\!\!\}\overset{3}{\otimes}P_{12}P_{23}(23)\left(x,y,\tau^*(z)\right)\\ \notag
    &\phantom{aaaaaaaaaaa}=\VdB(a,b,c;x,y,z)+\VdB(\phi(a),b,c;\tau^*(x),y,z)\\ \notag
    &\phantom{aaaaaaaaaaa}+\VdB(a,\phi(b),c;x,\tau^*(y),z)+\VdB(a,b,\phi(c);x,y,\tau^*(z)).
\end{align}

The second equality in \eqref{f48} is trivial -- simply compare the two expressions term-wise. Let us explain the first equality. For this we indicate how to group summands from \eqref{f43}, \eqref{f46}, and \eqref{f47} to obtain \eqref{f48}. Below we present a list of triples of natural numbers. The position in the list corresponds to a summand from \eqref{f48}, which is comprised of three summands from \eqref{f43}, \eqref{f46}, and \eqref{f47}. The three numbers correspond to these summands. For instance, the third item of the list says that we sum the second summand from \eqref{f43}, the eighth summand from \eqref{f46}, and the fifth summand from \eqref{f47} to obtain the third summand in \eqref{f48}.
\begin{align*}
    &&1)\ (1,1,1)\ \ \ \ &&2)\ (3,3,3)\ \ \ \ &&3)\ (2,8,5)\ \ \ \ &&4)\ (4,6,7)\\
    &&5)\ (5,2,8)\ \ \ \ &&6)\ (7,4,6)\ \ \ \ &&7)\ (8,5,2)\ \ \ \ &&8)\ (6,7,4)
\end{align*}

This completes the  proof of Proposition \ref{th1}.

\subsection{Main theorem}

Now we are in a position to formulate and prove our main result.

\begin{theorem}\label{thm3.A}
Let $(A,\phi)$ be an involutive $\kk$-algebra and $\{\!\!\{-,-\}\!\!\}$ be a double Poisson bracket on $A$ which is $\phi$-adapted. Let $d$ be a positive integer and $\tau$ be an involution on the matrix algebra $\operatorname{Mat}(d,\kk)$. Then the formula \eqref{f52}, 
\begin{equation*}
\left\{(a|x),(b|y)\right\}_{\phi,\tau}=\Bigl(\{\!\!\{a,b\}\!\!\}\mathlarger{|} P(x,y)\Bigr)+\Bigl(\{\!\!\{\phi(a),b\}\!\!\}\mathlarger{|} P(\tau^*(x),y)\Bigr),
\end{equation*}
where $a,b\in A$ and $x,y\in\operatorname{Mat}^*(d,\kk)$, gives rise to a Poisson bracket on the commutative algebra $\OO(A,d)^{\phi,\tau}$.
\end{theorem}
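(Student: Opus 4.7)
The plan is to proceed in direct parallel with the proof of Proposition~\ref{th1.B}, using Proposition~\ref{th1} as input. Recall from Definition~\ref{def2} that $\OO(A,d)^{\phi,\tau}$ is a quotient of $\OO(A,d)$, but it is more convenient to view it as a quotient of $S\bigl(L(A,d)^{\phi,\tau}\bigr)$ by the ideal $J$ generated by
\[
(bc)\otimes y-\bigl(b\otimes\Delta(y)'\bigr)\bigl(c\otimes\Delta(y)''\bigr)\quad\text{and}\quad 1\otimes y-\eps(y),
\]
for $b,c\in A$ and $y\in\Mat^*(d,\kk)$. Note that the relations $\phi(a)\otimes x=a\otimes\tau^*(x)$ are already built into $L(A,d)^{\phi,\tau}$ at the level of the underlying vector space. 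By Proposition~\ref{th1}, we already have a well-defined Poisson bracket $\{-,-\}_{\phi,\tau}$ on $S\bigl(L(A,d)^{\phi,\tau}\bigr)$, so the entire task reduces to showing that $J$ is a Poisson ideal.

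By the derivation property of a Poisson bracket, it suffices to check that $\{a\otimes x,\, Y\}_{\phi,\tau}\in J$ for every $a\in A$, every $x\in\Mat^*(d,\kk)$, and every generator $Y$ of $J$. The generator $Y=1\otimes y-\eps(y)$ is handled for free: applying the Leibniz rule of the double bracket to $\bl a,1\br=\bl a,1\cdot 1\br$ yields $\bl a,1\br=0$, and similarly $\bl\phi(a),1\br=0$, so both summands in \eqref{f29} vanish.

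The main case is the generator $Y=(bc)\otimes y-\bigl(b\otimes\Delta(y)'\bigr)\bigl(c\otimes\Delta(y)''\bigr)$. The crucial observation is that formula \eqref{f29} writes $\{-,-\}_{\phi,\tau}$ as the sum of two terms, each of which has the exact same structural shape as the single term in the untwisted Van den Bergh formula \eqref{vdB formula.2}; the second term merely uses $\bl\phi(a),-\br$ in place of $\bl a,-\br$ and $\tau^*(x)$ in place of $x$. Consequently, the calculation from the proof of Proposition~\ref{th1.B} can be transferred \emph{verbatim} to each of the two terms separately: expand $\bl a,bc\br$ and $\bl\phi(a),bc\br$ via the Leibniz rule of the double bracket, and then apply the commutation identities \eqref{f25}, \eqref{f26} from Lemma~\ref{lemma2} (these identities involve only $P$ and the coproduct $\Delta$, hence are insensitive to whether the first slot of $P$ holds $x$ or $\tau^*(x)$). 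Regrouping the resulting four summands pairs the two contributions coming from $\bl a,b\br,\bl\phi(a),b\br$ with one another, and similarly for $\bl a,c\br,\bl\phi(a),c\br$, producing modulo $J$ exactly
\[
\{a\otimes x,\,b\otimes\Delta(y)'\}_{\phi,\tau}\bigl(c\otimes\Delta(y)''\bigr)+\bigl(b\otimes\Delta(y)'\bigr)\{a\otimes x,\,c\otimes\Delta(y)''\}_{\phi,\tau}.
\]
This is precisely the derivation expansion of $\{a\otimes x,\bigl(b\otimes\Delta(y)'\bigr)\bigl(c\otimes\Delta(y)''\bigr)\}_{\phi,\tau}$, so $\{a\otimes x,Y\}_{\phi,\tau}\in J$, as required.

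I do not anticipate any conceptual obstacle beyond bookkeeping, since the argument is a clean ``doubling'' of the untwisted case. The one place where a little care is required is to verify that the four intermediate summands indeed group into the two instances of $\{-,-\}_{\phi,\tau}$ displayed above, rather than a mismatched combination — but this is automatic once one recognises that the twisting by $\phi$ and $\tau^*$ only relabels the inputs while leaving the combinatorics of $P$, $\Delta_L$, and $\Delta_R$ untouched. Consistency with the basic relation $\phi(a)\otimes x=a\otimes\tau^*(x)$ on $L(A,d)^{\phi,\tau}$ is also not an issue here, as it has already been addressed in Remark~\ref{rem2} and incorporated into Proposition~\ref{th1}.
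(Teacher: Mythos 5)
Your proposal is correct and follows essentially the same route as the paper: reduce to showing that the ideal of multiplicative relations in $S\bigl(L(A,d)^{\phi,\tau}\bigr)$ is Poisson, and observe that the two summands of \eqref{f29} are each handled by the untwisted computation from Proposition \ref{th1.B}, the second being the first with $(a,x)$ replaced by $(\phi(a),\tau^*(x))$. The paper phrases this by writing the relevant difference as $D(a,b,c;x,y)+D(\phi(a),b,c;\tau^*(x),y)$, which is exactly your ``doubling'' observation.
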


\begin{proof}
We argue as in the proof of Proposition \ref{th1.B}.  Denote by $I$ the ideal of $S\left(L(A,d)^{\phi,\tau}\right)$ generated by the elements 
$$
(bc)\otimes y-\left(b\otimes \Delta(y)'\right)\left(c\otimes \Delta(y)''\right) \quad \text{and} \quad 1\otimes y-\eps(y),
$$
where $b, c\in A$ and $y\in \Mat^*(d,\kk)$. 

Next, set $\Sym(A,d)^{\phi,\tau}:=S\left(L(A,d)^{\phi,\tau}\right)$. By virtue of Proposition \ref{th1}, it suffices to check that $I$ is a Poisson ideal, that is $\{X,Y\}_{\phi,\tau}\in I$ for any $X\in \Sym(A,d)^{\phi,\tau}$ and $Y\in I$.  

It suffices to check this on the generators. For $Y=1\otimes y-\eps(y)$ this is clear. Next, we take arbitrary $a,b,c\in A$ and $x,y\in\operatorname{Mat}^*(d,\kk)$ and check that
    \begin{equation*}
         \Bigl\{a\otimes x, (bc)\otimes y-\left(b\otimes \Delta(y)'\right)\left(c\otimes \Delta(y)''\right)\Bigr\}_{\phi,\tau}\in I.
    \end{equation*}
    
By the very definition of the bracket we have
\begin{equation}\label{f53}
    \{a\otimes x, (bc)\otimes y\}_{\phi,\tau}=\{\!\!\{a,bc\}\!\!\}\overset{2}{\otimes} P(x,y)+\{\!\!\{\phi(a),bc\}\!\!\}\overset{2}{\otimes} P(\tau^*(x),y).
\end{equation}
On the other hand, by the Leibniz rule we have
\begin{multline*}
    \Bigl\{a\otimes x,\left(b\otimes \Delta(y)'\right)\left(c\otimes \Delta(y)''\right)\Bigr\}_{\phi,\tau}=\Bigl\{a\otimes x,b\otimes \Delta(y)'\Bigr\}_{\phi,\tau}\left(c\otimes \Delta(y)''\right)\\
    +\left(b\otimes \Delta(y)'\right)\Bigl\{a\otimes x,c\otimes \Delta(y)''\Bigr\}_{\phi,\tau}.
\end{multline*}
Then expanding the bracket $\{-,-\}_{\phi,\tau}$, we obtain
\begin{multline}\label{f54}
    \Bigl\{a\otimes x,\left(b\otimes \Delta(y)'\right)\left(c\otimes \Delta(y)''\right)\Bigr\}_{\phi,\tau}=\Bigl(\{\!\!\{a,b\}\!\!\}\overset{2}{\otimes} P(x,\Delta(y)')\Bigr)\left(c\otimes \Delta(y)''\right)\\
    +\Bigl(\{\!\!\{\phi(a),b\}\!\!\}\overset{2}{\otimes} P(\tau^*(x),\Delta(y)')\Bigr)\left(c\otimes \Delta(y)''\right)+\left(b\otimes \Delta(y)'\right)\Bigl(\{\!\!\{a,c\}\!\!\}\overset{2}{\otimes} P(x,\Delta(y)'')\Bigr)\\
    +\left(b\otimes \Delta(y)'\right)\Bigl(\{\!\!\{\phi(a),c\}\!\!\}\overset{2}{\otimes} P(\tau^*(x),\Delta(y)'')\Bigr).
\end{multline}
Next, we compare identities \eqref{f53} and \eqref{f54}. Upon comparison, we observe that certain summands in both formulas do not include $\phi$ and $\tau$. In light of this, we need to distinguish and separate the summands that contain $\phi$ and $\tau$ from those that do not. The summands without $\phi$ and $\tau$ lead to the following expression
\begin{multline}\label{f55}
    \{\!\!\{a,bc\}\!\!\}\overset{2}{\otimes} P(x,y)-\Bigl(\{\!\!\{a,b\}\!\!\}\overset{2}{\otimes} P(x,\Delta(y)')\Bigr)\left(c\otimes \Delta(y)''\right)\\
    -\left(b\otimes \Delta(y)'\right)\Bigl(\{\!\!\{a,c\}\!\!\}\overset{2}{\otimes} P(x,\Delta(y)'')\Bigr),
\end{multline}
which definitely belongs to the ideal $I$ by the proof of Proposition \ref{th1.B}.

Let us denote expression \eqref{f55} by $D(a,b,c;x,y)$.   

Now we turn to the summands that do contain $\phi$ and $\tau$. They lead to the following expression 
\begin{multline*}
    \{\!\!\{\phi(a),bc\}\!\!\}\overset{2}{\otimes} P(\tau^*(x),y)-\Bigl(\{\!\!\{\phi(a),b\}\!\!\}\overset{2}{\otimes} P(\tau^*(x),\Delta(y)')\Bigr)\left(c\otimes \Delta(y)''\right)\\
    -\left(b\otimes \Delta(y)'\right)\Bigl(\{\!\!\{\phi(a),c\}\!\!\}\overset{2}{\otimes} P(\tau^*(x),\Delta(y)'')\Bigr),
\end{multline*}
which is $D(\phi(a),b,c;\tau^*(x),y)$, hence belongs to the ideal $I$ and the proof is complete.
\end{proof}

\subsection{Poisson symmetries}

The general linear group $\GL(d,\kk)$ acts by conjugations on the matrix space $\Mat(d,\kk)$ and hence on the dual space $\Mat^*(d,\kk)$, too. Let us denote the latter action by $\operatorname{Ad}^*$. It gives rise to a natural action of $\GL(d,\kk)$ on $\OO(A,d)$: in our coordinate-free notation it is written simply as  
\begin{equation*}
g: (a|x)\mapsto (a|\operatorname{Ad}^*_g(x)), \qquad g\in \GL(d,\kk), \quad a\in A, \quad x\in\Mat^*(d,\kk).
\end{equation*}
This action obviously agrees with the natural action of $\GL(d,\kk)$ on the representation space $\Rep(A,d)$. 

Moreover, this action also preserves the bracket $\{-,-\}$ on $\OO(A,d)$ defined by \eqref{eq2.E}. Indeed, this follows from the fact that the operator $P: \Mat^*(d,\kk)^{\otimes 2}\to \Mat^*(d,\kk)^{\otimes 2}$ is invariant under $\operatorname{Ad}^*\otimes \operatorname{Ad}^*$, which in turn follows from Remark \ref{rem3} and the evident invariance property of the element $\sigma\in \Mat(d,\kk)^{\otimes 2}$ under $\operatorname{Ad}^{\otimes 2}$. 

We want to notice that similar claims hold in the twisted case as well. Namely, let $\tau$ be an involution on $\Mat(d,\kk)$ and  set  $G(d,\tau):=\{g\in \GL(d,\kk)\mid \tau(g)=g^{-1}\}$; this is either the orthogonal or the symplectic group, depending on the type of $\tau$.  

\begin{proposition}
Let $(A,\phi)$ be an involutive algebra and\/ $\bl-,-\br$ be a $\phi$-adapted double Poisson bracket on $A$. There is a natural action of the group $G(d,\tau)$ by automorphisms of the twisted coordinate ring $\OO(A,d)^{\phi,\tau}$ preserving the Poisson bracket constructed in Theorem \ref{thm3.A}.
\end{proposition}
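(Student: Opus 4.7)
The plan is to restrict the natural $\GL(d,\kk)$-action on $\OO(A,d)$ (recalled just before the proposition) to the subgroup $G(d,\tau)$ and verify that it descends to $\OO(A,d)^{\phi,\tau}$ and preserves the bracket $\{-,-\}_{\phi,\tau}$. Three things need to be checked: the descent to the quotient, the algebra-automorphism property (automatic from the ambient $\GL(d,\kk)$-action), and the Poisson property.

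First I would isolate the one genuinely new ingredient, namely that for $g\in G(d,\tau)$, i.e.\ $\tau(g)=g^{-1}$, the maps $\operatorname{Ad}_g$ and $\tau$ commute on $\Mat(d,\kk)$. Since $\tau$ is an antiautomorphism,
$$
\tau(gMg^{-1})=\tau(g^{-1})\tau(M)\tau(g)=g\,\tau(M)\,g^{-1},
$$
which gives the claim. Dualizing, $\operatorname{Ad}^*_g$ commutes with $\tau^*$ on $\Mat^*(d,\kk)$.

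For the descent, I would apply $g$ to a generator \eqref{eq3.F} of the ideal cutting out $\OO(A,d)^{\phi,\tau}$:
$$
g\cdot\bigl[(\phi(a)\,|\,x)-(a\,|\,\tau^*(x))\bigr]=(\phi(a)\,|\,\operatorname{Ad}^*_g(x))-(a\,|\,\operatorname{Ad}^*_g\tau^*(x)).
$$
By the commutation just established, $\operatorname{Ad}^*_g\tau^*=\tau^*\operatorname{Ad}^*_g$, so the right-hand side is again a generator of the same ideal, and the action descends. Since the $\GL(d,\kk)$-action on $\OO(A,d)$ already operates by algebra automorphisms (the defining relations of $\OO(A,d)$ are $\GL$-equivariant because $\operatorname{Ad}^*_g$ is a coalgebra automorphism of $\Mat^*(d,\kk)$), the restricted action on $\OO(A,d)^{\phi,\tau}$ inherits this property.

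Finally, for Poisson invariance, comparing $g\cdot\{(a|x),(b|y)\}_{\phi,\tau}$ with $\{g\cdot(a|x),g\cdot(b|y)\}_{\phi,\tau}$ via formula \eqref{f52} reduces, term-by-term, to two commutation facts: that $(\operatorname{Ad}^*_g)^{\otimes 2}$ commutes with $P$, and that $\operatorname{Ad}^*_g$ commutes with $\tau^*$. The first holds for all $g\in\GL(d,\kk)$ (as essentially noted in the non-twisted discussion): via Remark \ref{rem3} it is dual to the invariance $\operatorname{Ad}_g^{\otimes 2}(\sigma)=\sigma$ of the flip tensor. The second is exactly the lemma from the first step and requires $g\in G(d,\tau)$; it is needed to handle the summand $(\bl\phi(a),b\br\,|\,P(\tau^*(x),y))$, where $\tau^*$ must slide past $\operatorname{Ad}^*_g$ inside $P$. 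No genuine obstacle is expected; the argument is bookkeeping resting on the single identity $\tau\circ\operatorname{Ad}_g=\operatorname{Ad}_g\circ\tau$ for $g\in G(d,\tau)$.
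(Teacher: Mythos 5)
Your proposal is correct and follows essentially the same route as the paper: restrict the $\GL(d,\kk)$-action, check that the ideal generated by the elements \eqref{eq3.F} is preserved, and deduce invariance of the bracket from the $\GL(d,\kk)$-invariance of the first summand together with the $G(d,\tau)$-equivariance of $\tau^*$. The only difference is that you spell out the identity $\tau\circ\operatorname{Ad}_g=\operatorname{Ad}_g\circ\tau$ for $\tau(g)=g^{-1}$, which the paper asserts without computation.
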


\begin{proof}
Recall (Definition \ref{def2}) that $\OO(A,d)^{\phi,\tau}$ is the quotient of $\OO(A,d)$ by the ideal generated by the elements \eqref{eq3.F}. Since the set of these elements is $G(d,\tau)$-invariant, the action of $G(d,\tau)$ on $\OO(A,d)$ descends to $\OO(A,d)^{\phi,\tau}$. Note that the resulting action agrees with the natural action of $G(d,\tau)$ on the involutive representation space $\Rep(A,d)^{\phi,\tau}$. 

It remains to check the $G(d,\tau)$-invariance of the Poisson bracket \eqref{f52}. The first summand on the right-hand of \eqref{f52} is invariant, because it has the same form as the bracket on $\OO(A,d)$, which is invariant under the action of the larger group $\GL(d,\kk)$. This in turn implies that the second summand is invariant, too, because the map $x\to \tau^*(x)$ is $G(d,\tau)$-equivariant.     
\end{proof}

\section{Examples of $\phi$-adapted double Poisson brackets}\label{sect4}

\subsection{Preliminaries}
Let $(A,\phi)$ be an involutive algebra. Recall that a double Poisson bracket $\bl -,-\br$ on $A$ is said to be \emph{$\phi$-adapted} if
\begin{equation}\label{eq3.A} 
\phi^{\otimes2}(\bl a, b\br)=\bl \phi(a),\phi(b)\br^\circ, \qquad a,b\in A.
\end{equation} 

\begin{lemma}\label{lemma3.A}
Let $\{a_i: i\in I\}$ be a system of generators of $A$ and suppose that the relation \eqref{eq3.A} holds for all pairs of the form $a=a_i$, $b=a_j$, where $i,j\in I$. Then $\bl-,-\br$ is $\phi$-adapted.  
\end{lemma}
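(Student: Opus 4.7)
The plan is to propagate the adaptation identity from pairs of generators to arbitrary pairs $(a,b)\in A\times A$ by a double induction on word length in the generators, using both forms of the Leibniz rule for $\bl-,-\br$.

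Set $D(a,b):=\phi^{\otimes 2}(\bl a,b\br)-\bl \phi(a),\phi(b)\br^\circ$; the goal is to show $D\equiv 0$. First I dispose of the unit: substituting $b=c=1$ into the outer Leibniz rule $\bl a,bc\br=\bl a,b\br c+b\bl a,c\br$ gives $\bl a,1\br=0$, and by skew-symmetry $\bl 1,a\br=0$, so $D(1,b)=D(a,1)=0$ for all $a,b$.

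Next I verify a propagation step in the second argument. Using $\phi^{\otimes 2}(xc)=\phi(c)\ast\phi^{\otimes 2}(x)$ and $\phi^{\otimes 2}(bx)=\phi^{\otimes 2}(x)\ast\phi(b)$ — both immediate from $\phi$ being an antiautomorphism — the outer Leibniz rule yields
\[
\phi^{\otimes 2}(\bl a,bc\br)=\phi(c)\ast\phi^{\otimes 2}(\bl a,b\br)+\phi^{\otimes 2}(\bl a,c\br)\ast\phi(b).
\]
On the other side, applying the outer Leibniz rule to $\bl \phi(a),\phi(c)\phi(b)\br$ and taking $(-)^\circ$, together with the identities $(xy)^\circ=x^\circ\ast y$ and $(yx)^\circ=y\ast x^\circ$, gives
\[
\bl\phi(a),\phi(bc)\br^\circ=\phi(c)\ast\bl\phi(a),\phi(b)\br^\circ+\bl\phi(a),\phi(c)\br^\circ\ast\phi(b).
\]
Subtracting, $D(a,bc)=\phi(c)\ast D(a,b)+D(a,c)\ast\phi(b)$, so $D(a,-)$ vanishes on $bc$ whenever it vanishes on $b$ and $c$. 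A symmetric calculation using the inner Leibniz rule $\bl ab,c\br=\bl a,c\br\ast b+a\ast\bl b,c\br$, combined with $\phi^{\otimes 2}(x\ast b)=\phi(b)\cdot\phi^{\otimes 2}(x)$ and $\phi^{\otimes 2}(a\ast x)=\phi^{\otimes 2}(x)\cdot\phi(a)$ on one side and $(x\ast y)^\circ=x^\circ\cdot y$, $(y\ast x)^\circ=y\cdot x^\circ$ on the other, yields $D(ab,c)=\phi(b)\cdot D(a,c)+D(b,c)\cdot\phi(a)$.

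Now I finish by induction. Starting from the hypothesis $D(a_i,a_j)=0$ for all generators, the second-argument propagation together with the base case on $1$ shows $D(a_i,b)=0$ for every $b\in A$ by induction on the word length of $b$. With this in hand, the first-argument propagation shows $D(a,b)=0$ for every $a\in A$ by induction on the word length of $a$. The only real subtlety is the bookkeeping: one must track carefully how $\phi^{\otimes 2}$ and $(-)^\circ$ swap the outer action $\cdot$ with the inner action $\ast$ and vice versa, so that the Leibniz expansions on the two sides of $D(a,bc)$ and $D(ab,c)$ match termwise. Once these intertwining formulas are set down, the induction itself is purely formal.
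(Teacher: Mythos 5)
Your proof is correct and follows essentially the same route as the paper: an induction on word length in the generators driven by the Leibniz rule, with your identity $D(a,bc)=\phi(c)\ast D(a,b)+D(a,c)\ast\phi(b)$ being exactly the computation the paper carries out in Sweedler components. The only difference is that the paper records just the second-argument propagation step and leaves the first argument implicit (it can be supplied either by your inner-Leibniz identity $D(ab,c)=\phi(b)D(a,c)+D(b,c)\phi(a)$ or by noting that skew-symmetry of the double bracket makes the defect $D$ vanish on $(a,b)$ iff it vanishes on $(b,a)$), so your write-up is, if anything, slightly more complete.
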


\begin{proof}

By induction, it suffices to show that if the relation \eqref{eq3.A} holds for some pairs $(a, b)$ and $(a,c)$, then it holds for the pair $(a,bc)$.

The verification is straightforward. To simplify the formulas we write
$$
\bl a,b\br=x'\otimes x'', \qquad \bl a,c\br=y'\otimes y''.
$$
We have 
$$
\bl a,bc\br=b\bl a,c\br +\bl a,b\br c=by'\otimes y''+ x'\otimes x'' c.
$$
It follows
\begin{equation}\label{eq3.B}
\phi^{\otimes 2}(\bl a,bc\br)=\phi(y')\phi(b)\otimes \phi(y'')+ \phi(x')\otimes \phi(c)\phi(x'').
\end{equation}
This is the left-hand side of the relation \eqref{eq3.A} for the pair $(a,bc)$.

Let us turn to the right-hand side. We have 
\begin{equation}\label{eq3.C}
\bl \phi(a), \phi(bc)\br=\bl \phi(a), \phi(c)\phi(b)\br\\
=\phi(c)\bl \phi(a),\phi(b)\br + \bl \phi(a), \phi(c)\br \phi(b).
\end{equation}
Now we use the assumption that the relation \eqref{eq3.A} holds for the pairs $(a,b)$ and $(a,c)$. We write this as
$$
\bl \phi(a),\phi(b)\br=\left(\phi^{\otimes2}(\bl a, b\br)\right)^\circ, \qquad \bl \phi(a),\phi(c)\br=\left(\phi^{\otimes2}(\bl a, c\br)\right)^\circ.
$$
It follows that \eqref{eq3.C} equals
$$
\phi(c)\phi(x'')\otimes \phi(x')+ \phi(y'')\otimes \phi(y')\phi(b).
$$
Finally we have to swap the tensor factors, as on the right-hand side of \eqref{eq3.A}. After that the result coincides with \eqref{eq3.B}, as desired.
\end{proof}

In the rest of the section we suppose that $A$ is the free unital algebra with $L\ge1$ generators $\al_1,\dots,\al_L$. 

\subsection{First example: linear $\phi$-adapted brackets}\label{sect4.2}

Following Pichereau and Van de Weyer \cite{pich} we say that a double Poisson bracket on $\kk\langle \al_1,\dots,\al_L\rangle$ is \emph{linear} if its values on the generators are given by the formula 
\begin{equation}\label{eq3.E}
\bl \al_i,\al_j\br=\sum_{k=1}^L (s^k_{ij} (\al_k\otimes 1)- s^k_{ji} (1\otimes \al_k)), \qquad i,j=1,\dots,L, \quad s^k_{ij}\in\kk.
\end{equation}
As shown in \cite[Proposition 10]{pich}, this happens if and only if the quantities $s^k_{ij}$ serve as the structure constants of an associative multiplication $\mu$ on the vector space $\kk^L$. The KKS bracket from Example \ref{examp_2} is a particular case, when the corresponding associative algebra is the direct sum of $L$ copies of the base field $\kk$.

Recall the definition of the involutions $\phi^+$ and $\phi^-$ from Example \ref{examp_1}.

\begin{proposition}\label{prop2}
The bracket \eqref{eq3.E} is $\phi^-$-adapted if and only if $s^k_{ij}=s^k_{ji}$, meaning that the corresponding multiplication $\mu$ on $\kk^L$ is commutative.
\end{proposition}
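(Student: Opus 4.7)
The plan is to invoke Lemma \ref{lemma3.A} to reduce the verification of the $\phi^-$-adaptedness condition \eqref{eq3.A} to the generators $\al_1,\dots,\al_L$, and then simply expand both sides of the desired identity using the explicit formula \eqref{eq3.E} and the fact that $\phi^-(\al_i)=-\al_i$, $\phi^-(1)=1$.

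More concretely, I would first apply $\phi^{-\otimes 2}$ to \eqref{eq3.E} to get
\begin{equation*}
\phi^{-\otimes 2}\bl \al_i,\al_j\br=\sum_{k=1}^L\bigl(-s^k_{ij}(\al_k\otimes 1)+s^k_{ji}(1\otimes \al_k)\bigr),
\end{equation*}
and separately compute the right-hand side of \eqref{eq3.A} as
\begin{equation*}
\bl\phi^-(\al_i),\phi^-(\al_j)\br^\circ=\bl\al_i,\al_j\br^\circ=\sum_{k=1}^L\bigl(s^k_{ij}(1\otimes \al_k)-s^k_{ji}(\al_k\otimes 1)\bigr),
\end{equation*}
since the two minus signs cancel by bilinearity. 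Equating the two expressions and comparing coefficients of $\al_k\otimes 1$ and $1\otimes\al_k$ on both sides gives the equivalent condition $s^k_{ij}=s^k_{ji}$ for all $i,j,k$.

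Finally, I would recall from \cite{pich} (as cited just above the proposition) that the constants $s^k_{ij}$ are precisely the structure constants of an associative multiplication $\mu$ on $\kk^L$, so the symmetry condition $s^k_{ij}=s^k_{ji}$ is exactly the commutativity of $\mu$. This establishes both implications at once, and completes the argument. I do not anticipate any real obstacle here — the only thing to be careful about is using the correct version \eqref{eq3.A} of the $\phi$-adaptedness condition (with the $\circ$ and with $\phi$ on both $a$ and $b$), and applying Lemma \ref{lemma3.A} to avoid having to verify the condition on arbitrary words in the generators.
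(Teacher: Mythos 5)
Your proposal is correct and follows essentially the same route as the paper: reduce to generators via Lemma \ref{lemma3.A}, expand both sides of \eqref{eq3.A} using \eqref{eq3.E} and $\phi^-(\al_i)=-\al_i$ (with the two signs cancelling on the right-hand side), and compare coefficients of $\al_k\otimes 1$ and $1\otimes\al_k$ to obtain $s^k_{ij}=s^k_{ji}$. The final appeal to \cite{pich} identifying this symmetry with commutativity of $\mu$ is exactly what the paper does as well.
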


\begin{proof}
By virtue of Lemma \ref{lemma3.A} it suffices to check the relation \eqref{eq3.A} for the generators of the free algebra, that is, when $a=\al_i$, $b=\al_j$ for some $i,j\in\{1,\dots,L\}$. From \eqref{eq3.E} we see that the left-hand side of \eqref{eq3.A} equals
$$
\sum_{k=1}^N (s^k_{ij} (\phi^-(\al_k)\otimes 1)- s^k_{ji} (1\otimes \phi^-(\al_k)))=\sum_{k=1}^N (-s^k_{ij} (\al_k\otimes 1)+s^k_{ji} (1\otimes \al_k)),
$$
while the right-hand side equals
$$
\bl \phi^-(\al_i),\phi^-(\al_j)\br^\circ=\bl \al_i,\al_j\br^\circ=\sum_{k=1}^N (- s^k_{ji} (\al_k \otimes 1)+s^k_{ij} (1\otimes \al_k)).
$$
This means that the equality is achieved if and only if $s^k_{ij}=s^k_{ji}$. 
\end{proof}

\begin{remark}
For the involution $\phi^+$ from Example \ref{examp_1}, the same computation leads to the relation $s^k_{ij}=-s^k_{ji}$. It means that the corresponding multiplication $\mu$ on $\kk^L$ should be skew-symmetric. This excludes at once unital algebras, but non-unital associative algebras with skew-symmetric multiplication rule exist, although they seem to be exotic. For instance, one can take an arbitrary two-step nilpotent Lie algebra. Then its bracket also defines an associative mutiplication rule for the trivial reason that any triple product equals $0$.
\end{remark}

\subsection{Second example: quadratic $\phi$-adapted brackets}\label{sect4.3}

Here we use some results from the paper \cite{ORS} by Odesskii, Rubtsov, and Sokolov (see also Rubtsov and Such\'anek   \cite{RubSuch}, section 12.3). Denote by $e_1,\dots,e_L$ the standard basis in $\kk^L$ and let $\mathcal R: \kk^L\otimes \kk^L\to \kk^L\otimes \kk^L$ be given by a tensor $r^{kl}_{ij}$:
$$
\mathcal R(e_i\otimes e_j)=\sum_{k,l=1}^L r^{kl}_{ij}\, e_k\otimes e_l.
$$
It is easily seen  that the formula 
\begin{equation}\label{eq3.D}
\bl \al_i,\al_j\br=\sum_{k,l=1}^L r^{kl}_{ij} \,\al_k\otimes \al_l
\end{equation}
determines a double Poisson bracket on $\kk\langle \al_1,\dots,\al_L\rangle$ if and only if the following two conditions hold:

\begin{itemize}
\item  the tensor $r_{ij}^{kl}$ is skew-symmetric with respect to simultaneous swapping of the two upper indices and the two lower indices,
that is, $r^{kl}_{ij}=-r^{lk}_{ji}$; this can also be written as
$$
\mathcal R^{12}=-\mathcal R^{21};
$$
\item  the operator $\mathcal R$ satisfies the relation
$$
\mathcal R^{12}\mathcal R^{23}+\mathcal R^{23}\mathcal R^{31} +\mathcal R^{31}\mathcal R^{12}=0.
$$
\end{itemize}
Here the symbol $\mathcal R^{ab}$ means that $\mathcal R$ acts on the $a$th and $b$th factors in a multiple tensor product. 

Due to skew-symmetry, the latter relation can be rewritten as 
\begin{equation}\label{eq4.A}
\mathcal R^{12}\mathcal R^{23}-\mathcal R^{23}\mathcal R^{13} -\mathcal R^{13}\mathcal R^{12}=0.
\end{equation}
Next, applying the transposition of the first and third tensor factors and using skew-symmetry again, this can be further rewritten as  \begin{equation}\label{eq4.B}
\mathcal R^{12}\mathcal R^{13}-\mathcal R^{23}\mathcal R^{12} +\mathcal R^{13}\mathcal R^{23}=0
\end{equation}
(see e.g. Schedler \cite{Schedler}, item (i) of Theorem 2.8). Both versions, \eqref{eq4.A} and \eqref{eq4.B} are known in the literature as the \emph{associative Yang-Baxter equation}, or AYBE for short. 

Thus, each skew-symmetric $L\times L$ matrix solution of the AYBE produces a double Poisson bracket on $\kk\langle \al_1,\dots,\al_L\rangle$ --- a \emph{quadratic} bracket, in the terminology of \cite{ORS} (but note that there are quadratic brackets of a more general kind, see \cite{ORS}). 

\begin{lemma}
Let $\phi$ be any of the two involutions $\phi^+$, $\phi^-$ from Example \ref{examp_1}. A quadratic double Poisson bracket \eqref{eq3.D} is $\phi$-adapted if and only if the tensor $r^{kl}_{il}$ is symmetric with respect to the upper indices: $r^{kl}_{ij}=r^{lk}_{ij}$.
\end{lemma}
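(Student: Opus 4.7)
The strategy is to apply Lemma \ref{lemma3.A}, which reduces verification of the $\phi$-adapted condition \eqref{eq3.A} to its values on a set of generators. Since here $A = \kk\langle\al_1,\dots,\al_L\rangle$ is free on $\al_1,\dots,\al_L$, it suffices to check \eqref{eq3.A} only for pairs of the form $a=\al_i$, $b=\al_j$.

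The key observation is that both $\phi^+$ and $\phi^-$ act on single generators by a sign: by the definitions \eqref{eq1.D1}--\eqref{eq1.D2} one has $\phi^+(\al_i)=\al_i$ and $\phi^-(\al_i)=-\al_i$. Consequently, on the left-hand side of \eqref{eq3.A} applied to $(\al_i,\al_j)$, the two signs cancel in the $\phi^-$ case (since $\phi^{\otimes 2}$ acts on a product of two generators), and trivially nothing happens in the $\phi^+$ case. In both cases the left-hand side becomes
\begin{equation*}
\phi^{\otimes 2}\bl\al_i,\al_j\br=\sum_{k,l=1}^L r^{kl}_{ij}\,\al_k\otimes\al_l.
\end{equation*}

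For the right-hand side of \eqref{eq3.A}, again the overall signs cancel (or are absent), so one is left with $\bl\al_i,\al_j\br^\circ$. Using the definition of $x\mapsto x^\circ$ and then relabelling the summation indices $k\leftrightarrow l$, this becomes
\begin{equation*}
\bl\al_i,\al_j\br^\circ=\sum_{k,l=1}^L r^{kl}_{ij}\,\al_l\otimes\al_k=\sum_{k,l=1}^L r^{lk}_{ij}\,\al_k\otimes\al_l.
\end{equation*}

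Matching the two expressions coefficient-wise in the basis $\{\al_k\otimes\al_l\}$ of $A^{\otimes 2}$ (which is linearly independent since $A$ is free) yields the equivalence $r^{kl}_{ij}=r^{lk}_{ij}$ for all $i,j,k,l$, proving both directions. There is no genuine obstacle: the main point is simply the sign-cancellation in the $\phi^-$ case, which makes the conclusion identical for both involutions.
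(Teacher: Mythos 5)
Your proof is correct and is exactly the verification the paper leaves implicit (its proof is simply ``Evident''): reduce to generators via Lemma \ref{lemma3.A}, note that the signs from $\phi^{\pm}$ cancel on both sides, and compare coefficients of the linearly independent set $\{\al_k\otimes\al_l\}$. The only cosmetic quibble is that this set is not a basis of $A^{\otimes 2}$, but linear independence is all you use, so the argument stands.
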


\begin{proof}
Evident.
\end{proof}

\begin{corollary}
Take as $\phi$ one of the involutions $\phi^+$, $\phi^-$ from Example \ref{examp_1}. Suppose that the tensor $r^{kl}_{ij}$, where $i,j,k,l$ range over $\{1,\dots,L\}$, is symmetric in the upper indices, skew symmetric in the low indices, and the corresponding operator $\mathcal R$ satisfies the AYBE. Then the formula \eqref{eq3.D} determines a $\phi$-adapted quadratic double Poisson bracket on $\kk\langle\al_1,\dots,\al_L\rangle$.  
\end{corollary}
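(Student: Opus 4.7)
The plan is to assemble the Corollary directly from the preceding Lemma together with the criterion for when the formula \eqref{eq3.D} defines a double Poisson bracket, as summarized in the bullet list just above. The main observation is that the two hypotheses, "symmetric in the upper indices" and "skew-symmetric in the lower indices", are actually a \emph{refinement} of the single skew-symmetry condition $r^{kl}_{ij}=-r^{lk}_{ji}$ needed to make \eqref{eq3.D} a double bracket.

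First I would verify that the hypotheses imply the double-bracket skew-symmetry: combining $r^{kl}_{ij}=r^{lk}_{ij}$ with $r^{kl}_{ij}=-r^{kl}_{ji}$ yields $r^{kl}_{ij}=r^{lk}_{ij}=-r^{lk}_{ji}$, which in operator form reads $\mathcal R^{12}=-\mathcal R^{21}$. Together with the assumption that $\mathcal R$ satisfies AYBE, the criterion recalled from \cite{ORS} then guarantees that \eqref{eq3.D} defines a double Poisson bracket on $\kk\langle\al_1,\dots,\al_L\rangle$.

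Second, the $\phi$-adaptedness is immediate from the preceding Lemma: that Lemma says that, for either of the involutions $\phi^+$ or $\phi^-$, the quadratic bracket \eqref{eq3.D} is $\phi$-adapted precisely when $r^{kl}_{ij}=r^{lk}_{ij}$, which is exactly the symmetry-in-upper-indices hypothesis. Putting the two steps together finishes the proof.

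There is no genuine obstacle in this Corollary — it is merely a packaging of the Lemma with the known characterization of quadratic double Poisson brackets in terms of solutions of AYBE. The only point worth emphasizing in writing is the compatibility of the two symmetry conventions: "symmetric in upper, skew-symmetric in lower" is strictly stronger than "skew under simultaneous swap", so the hypotheses automatically place us in the regime where \eqref{eq3.D} is well defined as a double bracket.
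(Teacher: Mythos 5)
Your proposal is correct and matches the paper's (implicit) argument: the Corollary is exactly the combination of the ORS criterion (skew-symmetry $r^{kl}_{ij}=-r^{lk}_{ji}$ plus AYBE) with the preceding Lemma, and your observation that symmetry in the upper indices together with skew-symmetry in the lower indices implies the combined skew-symmetry is the only point that needs checking. Nothing further is required.
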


Here is a concrete example of such brackets, extracted from \cite[(2.19)]{ORS}: 
\begin{gather*}
\bl \al_i,\al_i\br=0,\\
\bl \al_i,\al_j\br=\frac1{\la_i-\la_j}( \al_i\otimes \al_j+\al_j\otimes \al_i -\al_i\otimes \al_i-\al_j\otimes \al_j ), \quad i\ne j,
\end{gather*}
where $\la_1,\dots,\la_L$ is an $N$-tuple of pairwise distinct parameters. 

\section{Poisson brackets from the centralizer construction}\label{sect5}

\subsection{Preliminaries} 

Here is a brief description of what the centralizer construction is.  One takes an ascending chain $\{\aa(N)\}$ of reductive Lie algebras and a chain $\{\bb(N)\}$ of their subalgebras, and one considers the centralizer algebras 
$$
U(\aa(N))^{\bb(N)} \subset U(\aa(N))
$$
inside the universal enveloping algebras $U(\aa(N))$. In some concrete cases one can define filtration preserving algebra morphisms 
$$
U(\aa(N-1))^{\bb(N-1)}\leftarrow U(\aa(N))^{\bb(N)},
$$
which makes it possible to define a projective limit algebra

\begin{equation}\label{eq5.A}
\varprojlim\, U(\aa(N))^{\bb(N)}, \quad N\to\infty,
\end{equation}
where the limit is taken in the category of filtered associative algebras. When this procedure works, it gives a sense to the concept of \emph{large-$N$ limit of centralizer algebras}.

In the initial papers, the centralizer construction was studied in the following context: $\{\aa(N)\}$ is one of the $4$ series of the classical reductive Lie algebras and the subalgebra $\bb(N)\subset\aa(N)$ belongs to the same series, but has a smaller rank, differing from the rank of $\aa(N)$ by a fixed positive integer, say $d$. This led to an alternative approach to the Yangian of the Lie algebra $\gl(d,\kk)$ (when $\aa(N)=\gl(N,\kk)$) and the discovery of the so-called twisted Yangians (when the $\aa(N)$'s are the orthogonal or symplectic Lie algebras): all these Yangian algebras sit inside the limit algebras \eqref{eq5.A}. See \cite{Ols1989}, \cite{Ols1992}, and a more detailed exposition in \cite{Ols1991}, \cite{MO}, \cite[chapter 8]{M}.

A different version of the centralizer construction is investigated in the recent paper \cite{Ols}. Here one takes 
\begin{equation}\label{eq5.C}
\begin{aligned}
\aa(N)=\gl(N,\kk)^{\oplus L}=\gl(N,\kk)\oplus\dots\oplus\gl(N,\kk) \quad \text{($L$ times)},\\
\bb(N)=\gl_d(N,\kk):=\operatorname{span}\{E_{kl}\colon d+1\le k,l\le N\}\simeq \gl(N-d,\kk)\subset \gl(N,\kk),
\end{aligned}
\end{equation}
with the understanding that $\bb(N)\to\aa(N)$ is the diagonal embedding and $L\ge2$ is a fixed integer. In this situation, a projective limit algebra  \eqref{eq5.A} can again be constructed, and it contains a ``Yangian-type'' subalgebra denoted by $Y_{d,L}$.  

Next, the centralizer construction is also applicable to symmetric algebras.  That is, a noncommutative filtered limit algebra of the form \eqref{eq5.A} can be replaced by a commutative graded limit algebra of the form
\begin{equation}\label{eq5.B}
\varprojlim\, S(\aa(N))^{\bb(N)}, \quad N\to\infty,
\end{equation}
which possesses a natural Poisson structure. Note that working with the Poisson algebras \eqref{eq5.B} is much easier than with the noncommutative associative algebras \eqref{eq5.A}. 

In the context of \eqref{eq5.C}, the Poisson structure of the limit algebra \eqref{eq5.B} was described in \cite[section 5]{Ols}. There it was shown that the Poisson bracket is related to the KKS double Poisson bracket on the free algebra with $L$ generators. Our aim is to extend the computation in  \cite[Proposition 5.2]{Ols}  to the case when the general linear Lie algebras in \eqref{eq5.C} are replaced by the orthogonal or symplectic Lie algebras. The result is given by Proposition \ref{th4}. As mentioned above (section \ref{sect1.5}), it  explains the origin of our main result. The computation given below is direct and simple enough, and it can be extended to other cases. This is one more reason why we decided to include this material in the present paper.

\subsection{Poisson brackets}

Let us set $\mathfrak{g}_N=\mathfrak{o}(N)$ or $\mathfrak{g}_N=\mathfrak{sp}(N)$ (the orthogonal or symplectic Lie algebra over $\kk$, of rank $r:=\lfloor N/2\rfloor$). It will be convenient for us to work with the following realization of $\mathfrak{g}_N$. Let us rename the standard basis $\{e_1,\dots,e_N\}$ of $\kk^N$ to $\{e_{-r},e_{-r+1},\ldots, e_{r-1},e_{r}\}$, where $e_0$ is dropped for $N$ even. Introduce also a map $\theta:\mathbb{Z}\rightarrow \{\pm1\}$ such that $\theta(i)\equiv1$  in the orthogonal case and $\theta(i)=\operatorname{sgn}(i)$ in the symplectic case. In what follows the indices $i,j,k,l$ are always taken from the set $\{-\lfloor N/2\rfloor, \dots,\lfloor N/2\rfloor\}$ with the understanding that $0$ is excluded for $N$ even.

Using this notation we realize $\mathfrak{g}_N$ as the subalgebra of $\gl(N,\kk)$ spanned by the elements
\begin{equation*}
    F_{ij}:=E_{ij}-\theta(ij)E_{-j,-i}.
\end{equation*}

\begin{remark}\label{rem1}
    Note that $\mathfrak{g}_N=\{x\in\mathfrak{gl}(N,\kk)\mid \tau(x)=-x\}$, where $\tau$ is the matrix involution corresponding to the symmetric (for $\mathfrak{o}(N)$) or skew-symmetric (for $\mathfrak{sp}(N)$) bilinear form $\left<e_i,e_j\right>=\theta(i)\delta_{i,-j}$, so that $\tau$ is given by
\begin{equation*}
    \tau(E_{ij})=\theta(ij)E_{-j,-i}.
\end{equation*}
\end{remark}

The elements $F_{ij}$ possess the following symmetry
\begin{equation}\label{f11}
    F_{ij}=-\theta(ij)F_{-j,-i}
\end{equation}
and obey the commutation relations
\begin{equation}\label{f8}
    \left[F_{ij},F_{kl}\right]=\delta_{kj}F_{il}-\delta_{il}F_{kj}-\theta(ij)\Bigl(\delta_{k,-i}F_{-jl}-\delta_{-jl}F_{k,-i}\Bigr).
\end{equation}

If we set $Q^{ij}_{kl}:=\delta_{kj}F_{il}-\delta_{il}F_{kj}$, then the commutation relations \eqref{f8} can be rewritten as
\begin{equation}\label{eq5.F}
    \left[F_{ij},F_{kl}\right]=Q^{ij}_{kl}-\theta(ij)Q^{-j,-i}_{kl}.
\end{equation}

Pick a positive integer $L$. Below we will work with the symmetric algebra $S\left(\mathfrak{g}_N^{\oplus L}\right)$. The element $F_{ij}$ from the $r$-th component of $\mathfrak{g}_N^{\oplus L}$ will be denoted by $F_{ij\mid r}$.

We regard the set $[L]:=\{1,2,\ldots,L\}$ as an alphabet with $L$ letters and denote by $W_L$ the set of all words in this alphabet. We are dealing with the free algebra in $L$ generators $\kk\langle \al_1,\ldots,\al_L\rangle$. To simplify the notation we identify the generators $\al_i$ with the corresponding indices $i$, which are treated as letters from the alphabet $[L]$. This enables us to identify monomials in the generators with words from $W_L$. Thus, multiplication of monomials in the free algebra is the same as concatenation of words. The empty word corresponds to the unit element of the algebra. 

We consider the double Poisson bracket on $\kk\langle \al_1,\ldots,\al_L\rangle$ from Example \ref{examp_2}. Recall that it is defined  on the generators by 
\begin{equation}\label{eq5.D}
    \bl \al_i, \al_j\br=\de_{ij}(1\otimes \al_i-\al_i\otimes 1), \quad i,j=1,\dots,L.
\end{equation}
More generally, for arbitrary words $w,t\in W_L$ the bracket is given by 
\begin{equation}\label{eq5.E}
    \{\!\!\{w,t\}\!\!\}=\sum\limits_{\substack{\alpha\in \left[L\right],\ w',w'',t',t''\colon\\
    w=w'\alpha w'',\ t=t'\alpha t''}}(t'w''\otimes w'\alpha t''-t'\alpha w''\otimes w' t'').
\end{equation}
Note that some of the words $w', w'', t', t''$ may be empty, which happens if the letter $\al$ is at the very beginning or at the very end of $w$ or $t$. Thus, if both $t'$ and $w''$ are empty (meaning that $\al$ is the first letter of $t$ and also the last letter of $w$), then  $t'w''=1$; a similar convention is applied to $w't''$. The formula \eqref{eq5.E} is obtained from \eqref{eq5.D} by applying the Leibniz rule. 

Finally, given a word $w=w_1\dots w_m\in W_L$,  let $f_{ij}(w)$ denote the following expression
\begin{equation}\label{f9}
    f_{ij}(w):=\sum\limits_{a_1,\ldots,a_{m-1}}F_{ia_1\mid w_1}F_{a_1a_2\mid w_2}\cdot\ldots\cdot F_{a_{m-1}j\mid w_m}\in S(\mathfrak{g}_N^{\oplus L}).
\end{equation}
Here $i,j$ are taken from the set $\{-\lfloor N/2\rfloor, \dots,\lfloor N/2\rfloor\}$ (where $0$ is excluded if $N$ is even), and the indices $a_1,\ldots,a_{m-1}$ range over the same set. We also set $f_{ij}(\varnothing):=\de_{ij}$ for the empty word $w=\varnothing$. 

Note that if we additionally impose the restriction $-d\le i, j \le d$, then the corresponding elements \eqref{f9} commute with the subalgebra $\bb(N)$ which is isomorphic to $\mathfrak o(2(N-d))$ or $\mathfrak{sp}(2(N-d))$. 

In the next proposition we extend the symbol $f_{ij}(\cdots)$ to linear combinations of words by linearity. 

\begin{proposition}\label{th4}
Let $\{-,-\}$ be the canonical Poisson bracket in the symmetric algebra $S(\mathfrak{g}_N^{\oplus L})$. The values of this bracket on the elements of the form \eqref{f9} are given by
\begin{multline}\label{f10}
    \{f_{ij}(w),f_{kl}(t)\}=f_{kj}\left(\{\!\!\{w,t\}\!\!\}'\right)f_{il}\left(\{\!\!\{w,t\}\!\!\}''\right)\\
    +\theta(ij)\cdot f_{k,-i}\left(\{\!\!\{\phi^-(w),t\}\!\!\}'\right)f_{-j,l}\left(\{\!\!\{\phi^-(w),t\}\!\!\}''\right),
\end{multline}
where $w,t\in\kk\langle \al_1,\ldots,\al_L\rangle$ and $\phi^-$ is the antiautomorphism of the free algebra specified in Example \ref{examp_1}.
\end{proposition}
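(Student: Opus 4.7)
The plan is to prove \eqref{f10} by induction on the total length $|w|+|t|$ of the two input words, reducing everything to the commutation relations \eqref{f8} via the Leibniz rule for the canonical Poisson bracket on $S(\mathfrak{g}_N^{\oplus L})$ and the two Leibniz rules (outer and inner) for the KKS double bracket on $\kk\langle \alpha_1,\ldots,\alpha_L\rangle$. A helpful auxiliary identity is $f_{ij}(uv)=\sum_b f_{ib}(u)f_{bj}(v)$ for words $u,v$, which lets one freely move the ``cut'' between the two tensor factors in $f_{kj}(\cdots)f_{il}(\cdots)$.

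First I would settle the base case $|w|=|t|=1$, say $w=\alpha$ and $t=\beta$. Here $f_{ij}(\alpha)=F_{ij|\alpha}$, and since the Poisson bracket on $S(\mathfrak{g}_N^{\oplus L})$ extends the Lie bracket, the left-hand side of \eqref{f10} equals $\delta_{\alpha\beta}[F_{ij},F_{kl}]_{|\alpha}$. The KKS formula gives $\{\!\!\{\alpha,\beta\}\!\!\}=\delta_{\alpha\beta}(1\otimes\alpha-\alpha\otimes 1)$ and, because $\phi^-(\alpha)=-\alpha$, we also get $\{\!\!\{\phi^-(\alpha),\beta\}\!\!\}=\delta_{\alpha\beta}(\alpha\otimes 1-1\otimes\alpha)$. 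Substituting into the right-hand side of \eqref{f10}, the first summand produces $\delta_{\alpha\beta}(\delta_{kj}F_{il|\alpha}-\delta_{il}F_{kj|\alpha})$ and the second yields $\theta(ij)\delta_{\alpha\beta}(\delta_{-j,l}F_{k,-i|\alpha}-\delta_{k,-i}F_{-j,l|\alpha})$. Their sum coincides on the nose with $\delta_{\alpha\beta}[F_{ij},F_{kl}]_{|\alpha}$ by the very form of \eqref{f8}. This is the decisive calculation that explains why the twisted summand and the sign $\theta(ij)$ must appear in \eqref{f10}, and which ultimately motivated the formula \eqref{f12} of Theorem \ref{thm1.A}.

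For the inductive step I would split $w=w_1 w'$ with $w_1$ a single letter, so that $f_{ij}(w)=\sum_a F_{ia|w_1}f_{aj}(w')$. Expanding by the Leibniz rule for the Poisson bracket on the LHS and applying the inductive hypothesis to the two sub-brackets $\{F_{ia|w_1},f_{kl}(t)\}$ and $\{f_{aj}(w'),f_{kl}(t)\}$, one obtains four groups of terms. On the RHS, the inner Leibniz rule for the double bracket gives $\{\!\!\{w_1 w',t\}\!\!\}=\{\!\!\{w_1,t\}\!\!\}\ast w'+w_1\ast\{\!\!\{w',t\}\!\!\}$, while for the twisted summand one writes $\phi^-(w_1 w')=\phi^-(w')\phi^-(w_1)$ and again invokes the inner Leibniz rule. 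Using the factorization identity for $f_{ij}(uv)$ and commutativity in the symmetric algebra, the four LHS contributions match the corresponding RHS contributions term by term.

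The main obstacle is controlling the twisted summand when the first letter of $w$ is split off. Because $\phi^-$ reverses word order, that first letter becomes the \emph{last} piece of $\phi^-(w)$, so the inductive hypothesis is being applied to a reordered product, and the match-up is not automatic. Reconciling the resulting signs and the index flip $i\mapsto -i$, $j\mapsto -j$ relies on the symmetry \eqref{f11}, $F_{ij}=-\theta(ij)F_{-j,-i}$, together with the multiplicativity $\theta(ab)=\theta(a)\theta(b)$ on sign indices. A careful bookkeeping of these signs through the sums over the intermediate indices $a_1,\ldots,a_{m-1}$ completes the induction.
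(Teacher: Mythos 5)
Your argument is correct and arrives at \eqref{f10} by a genuinely different route from the paper. The paper's proof is a single global computation: it expands $\{f_{ij}(w),f_{kl}(t)\}$ by the Leibniz rule into a double sum over all positions $(p,q)$, splits each elementary bracket into the two $Q$-terms of \eqref{eq5.F}, and converts the contribution of the second term into the twisted summand of \eqref{f10} in one stroke by applying the symmetry \eqref{f11} to \emph{every} remaining factor, collecting the total sign $(-1)^m\theta(ij)$, and relabelling the intermediate indices --- which is exactly the reversal $w\mapsto\phi^-(w)$ and the flip $i\mapsto -j$, $j\mapsto -i$. Your induction localizes this: the base case $|w|=|t|=1$ isolates the decisive comparison with \eqref{f8} (and indeed explains where $\theta(ij)$ and the twisted term come from), and in the inductive step the twisted contributions do match, via $F_{ia\mid w_1}=\theta(ia)f_{-a,-i}(\phi^-(w_1))$, $f_{aj}(w')=\theta(aj)f_{-j,-a}(\phi^-(w'))$, the telescoping $\theta(ia)\theta(aj)=\theta(ij)$, and the inner Leibniz rule applied to $\phi^-(w_1w')=\phi^-(w')\phi^-(w_1)$; the key identity you need, $f_{ij}(\phi^-(u))=\theta(ij)f_{-j,-i}(u)$, is precisely the iterated form of \eqref{f11}. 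The one genuine omission is that your recursion only peels letters off $w$, so starting from $|w|=|t|=1$ it never produces the cases $|w|=1$, $|t|\ge 2$, which your own step then consumes as the input $\{F_{ia\mid w_1},f_{kl}(t)\}$. You must first run a preliminary (and easier) induction in $t$, splitting $t=t_1t''$ and using the outer Leibniz rule $\bl w,t_1t''\br=\bl w,t_1\br t''+t_1\bl w,t''\br$; this causes no sign or reversal difficulties because $t$ is never fed through $\phi^-$ in \eqref{f10}. With that supplement the induction closes, and your proof trades the paper's brevity for a transparent, term-by-term verification that also makes the link to Theorem \ref{thm1.A} explicit.
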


\begin{remark}
A link between Proposition \ref{th4} and our main result, Theorem \ref{thm3.A}, is established as follows.  In that theorem, we take $A:=\kk\langle \al_1,\ldots,\al_L\rangle$ and $\phi:=\phi^-$, and we consider the double Poisson bracket \eqref{eq5.E} on $A$. As the matrix involution $\tau$, we take the one from Remark \ref{rem1}.
Then the Poisson bracket $\{-,-\}_{\phi,\tau}$ on $\OO(A,d)^{\phi,\tau}$ reads as
\begin{equation*}
    \{w_{ij},t_{kl}\}_{\phi,\tau}=\{\!\!\{w,t\}\!\!\}'_{kj}\{\!\!\{w,t\}\!\!\}''_{il}+\theta(ij)\cdot\{\!\!\{\phi^-(w),t\}\!\!\}'_{k,-i}\{\!\!\{\phi^-(w),t\}\!\!\}''_{-j,l}
\end{equation*}
for any $w,t\in\kk\langle \al_1,\ldots,\al_L\rangle$ and any $i,j,k,l$. We see that this expression coincides with \eqref{f10}, up to the identification $f_{ij}(w)\leftrightarrow w_{ij}$. 
\end{remark}

\begin{proof}[Sketch of proof of Proposition \ref{th4}]
Let $w=w_1\ldots w_m$ and $t=t_1\ldots t_n$ be the decompositions into letters. Set $[m]:=\{1,\dots,m\}$, $[n]:=\{1,\dots,n\}$. 
By the Leibniz rule we can write
\begin{equation}\label{eq5.H}
    \{f_{ij}(w),f_{kl}(t)\}=\sum\limits_{\substack{a_0,\ldots,a_m,\\ b_0,\ldots, b_n\\
    a_0=i,\ a_m=j,\\ b_0=k,\ b_n=l}}\sum_{p\in\left[m\right],q\in\left[n\right]}\Big\{F_{a_{p-1},a_p\mid w_p},F_{b_{q-1},b_q\mid t_q}\Big\}\prod\limits_{\substack{r=1\\r\neq p}}^{m}F_{a_{r-1},a_r\mid w_r}\prod\limits_{\substack{s=1\\s\neq q}}^{n}F_{b_{s-1},b_s\mid t_s}.
\end{equation}
The bracket $\{-,-\}$ on the right-hand side is given by the same formula as the commutator in \eqref{eq5.F}:
\begin{equation}\label{eq5.G}
    \Big\{F_{a_{p-1},a_p\mid w_p},F_{b_{q-1},b_q\mid t_q}\Big\}=\delta_{w_p,t_q}\Bigg(Q_{b_{q-1},b_q}^{a_{p-1},a_p}\Big\vert_{w_p}-\theta(a_{p-1}a_p)Q_{b_{q-1},b_q}^{-a_p,-a_{p-1}}\Big\vert_{w_p}\Bigg),
\end{equation}
where the vertical bar on the right-hand side has the same meaning as in the notation $F_{ij\mid r}$. 
 
It is straightforward to check that the contribution to the right-hand side of \eqref{eq5.H} coming from the first summand in \eqref{eq5.F} results in the first summand in \eqref{f10}. For  more details we refer the reader to the discussion at the end of the proof of \cite[Proposition 5.2]{Ols}. 

Now we focus on the contribution from the second summand. We are going to show that it produces a similar result after a change: 
$$
i\mapsto -j, \quad j\mapsto -i, \quad w\mapsto \phi^-(w)=(-1)^m w_m\ldots w_1.
$$

Indeed, we have to handle the expression
$$
\sum\limits_{\substack{a_0,\ldots,a_m,\\b_0,\ldots, b_n\\ a_0=i,\ a_m=j,\\ b_0=k,\ b_n=l}}\sum_{p\in\left[m\right],q\in\left[n\right]}\delta_{w_p,t_q}(-1)\theta(a_{p-1}a_p)Q_{b_{q-1},b_q}^{-a_p,-a_{p-1}}\Big\vert_{w_p}\prod\limits_{\substack{r=1\\r\neq p}}^{m}F_{a_{r-1},a_r\mid w_r}\prod\limits_{\substack{s=1\\s\neq q}}^{n}F_{b_{s-1},b_s\mid t_s}.
$$
Applying the symmetry relation \eqref{f11} we transform it to
\begin{multline*}
\sum\limits_{\substack{a_0,\ldots,a_m,\\b_0,\ldots, b_n\\ a_0=i,\ a_m=j,\\ b_0=k,\ b_n=l}}\sum_{p\in\left[m\right],q\in\left[n\right]}\delta_{w_p,t_q}(-1)\theta(a_{p-1}a_p)Q_{b_{q-1},b_q}^{-a_p,-a_{p-1}}\Big\vert_{w_p}
    \\
\times \prod\limits_{\substack{r=1\\r\neq p}}^{m}\Bigg((-1)\theta(a_{r-1}a_r)F_{-a_r,-a_{r-1}\mid w_r}\Bigg)\prod\limits_{\substack{s=1\\s\neq q}}^{n}F_{b_{s-1},b_s\mid t_s}.
\end{multline*}
This can be further written as 
\begin{multline*}
 \sum\limits_{\substack{a_0,\ldots,a_m,\\b_0,\ldots, b_n\\ a_0=i,\ a_m=j,\\ b_0=k,\ b_n=l}}\sum_{p\in\left[m\right],q\in\left[n\right]}\delta_{w_p,t_q}(-1)^m\theta(a_0a_1\cdot a_1a_2\cdot\ldots \cdot a_{m-1}a_m)
    \\
 \times F_{-a_m,-a_{m-1}\mid w_m}F_{-a_{m-1},-a_{m-2}\mid w_{m-1}}\ldots Q_{b_{q-1},b_q}^{-a_p,-a_{p-1}}\Big\vert_{w_p} F_{-a_{p-1},-a_{p-2}\mid w_{p-1}}\ldots F_{-a_1,-a_0\mid w_1}
    \\
    \times \prod\limits_{\substack{s=1\\s\neq q}}^{n}F_{b_{s-1},b_s\mid t_s}.
\end{multline*}
Finally, we note that 
$$
(-1)^m\theta(a_0a_1\cdot a_1a_2\cdot\ldots \cdot a_{m-1}a_m)=(-1)^m\theta(ij)
$$ 
and relabel the indices $a_1\mapsto -a_1,\ldots, a_{m-1}\mapsto -a_{m-1}$. After this minor change the remaining part of the last expression takes the desired form, leading to the second summand in \eqref{f10}. 
\end{proof}

\subsection*{Acknowledgements}
We are grateful to Maxime Fairon for productive discussions, many useful comments, and suggestions. The second named author (N.S.) wishes to express his sincere gratitude to Maria Gorelik and Dmitry Gourevitch for their  hospitality at the Weizmann Institute of Science. N.S. is immensely appreciative of Michael Pevzner for his invaluable support, which made it possible for N.S. to work at the University of Reims Champagne-Ardenne under the PAUSE program. N.S. is grateful to everyone at the Mathematical Laboratory of Reims, especially to Valentin Ovsienko and Sophie Morier-Genoud, for their kind support and welcoming atmosphere.

\subsection*{Funding}
The present work was supported by the Russian Science Foundation under project 23-11-00150.

\end{document}